\documentclass[preprint,12pt]{elsarticle}

\usepackage{amssymb}
\usepackage{amsmath,amsthm,mathtools}
\usepackage{graphicx}
\usepackage{caption}
\usepackage{subcaption}
\usepackage{booktabs}
\usepackage{textcomp}
\usepackage{lipsum}
\usepackage{afterpage}
\usepackage{natbib}
\usepackage{extarrows}
\RequirePackage{tikz}
\usetikzlibrary{shapes,arrows}

\usepackage{csl}
\usepackage{soul,ulem}

\newtheorem{proposition}{Proposition}[section]

\newtheorem{definition}[proposition]{Definition}
\newtheorem{remark}[proposition]{Remark}
\newtheorem{assumption}[proposition]{Assumption}
\newtheorem{lemma}[proposition]{Lemma}

\journal{Applied Numerical Mathematics}

\begin{document}

\par\vfill\break 

\advance\vsize by 1cm 
\advance\voffset by -0.5cm 
\csltitle{A Patankar predictor-corrector approach for positivity-preserving time integration}

\cslauthor{Kamila Nurkhametova, Reid J. Gomillion, Amit N. Subrahmanya, and Adrian Sandu} 

\cslemail{kamila@vt.edu, rjg18@vt.edu, famitnagesh@anl.gov, asandu7@vt.edu}

\cslyear{26}
\cslreportnumber{1}

\csltitlepage

\par\vfill\break 

\advance\vsize by -1cm 
\advance\voffset by 0.5cm 

\begin{frontmatter}



\title{A Patankar predictor-corrector approach for positivity-preserving time integration}


\author[1]{Kamila Nurkhametova} 
\author[1]{Reid J. Gomillion}
\author[2]{Amit N. Subrahmanya}
\author[1]{Adrian Sandu}

\affiliation[1]{organization={Computational Science Laboratory, Department of Computer Science, Virginia Tech},
            addressline={620 Drillfield Dr.},
            city={Blacksburg},
            postcode={24061},
            state={VA},
            country={USA}}
\affiliation[2]{organization={Mathematics and Computer Science Division, Argonne National Laboratory},
            addressline={9700 S Cass Ave.},
            city={Lemont},
            postcode={60439},
            state={IL},
            country={USA}}

\begin{abstract}

Many natural processes, such as chemical reactions and wave dynamics, are modeled as production-destruction (PD) systems that obey positivity and linear conservation laws. Classical time integrators do not guarantee positivity and can produce negative or nonphysical numerical solutions. This paper presents a modular correction strategy that can be applied to stiffly implicit Runge-Kutta schemes, in particular stiffly accurate SDIRK methods. The strategy combines stage-wise clipping with a ratio-based scaling that enforces invariants and is guaranteed to yield non-negative, conservative solutions. We provide a theoretical analysis of the corrected schemes and characterize their worst-case order of accuracy relative to the underlying base method. Numerical experiments on stiff ODE systems (Robertson, MAPK, stratospheric chemistry) and a nonlinear PDE (the Korteweg-De Vries equation) demonstrate that the corrected SDIRK methods preserve positivity and invariants without significant loss of accuracy. Importantly, corrections applied only to the final stage are sufficient in practice, while applying them at all stages may distort dynamics in some cases.  These results show that the proposed framework provides a simple and effective way to construct positivity-preserving integrators for stiff PD systems.
\end{abstract}



\begin{keyword}
Positivity-preserving numerical methods \sep Production-destruction systems \sep Ordinary differential equations


\end{keyword}

\end{frontmatter}



\section{Introduction}
\label{sec:introduction}
Many systems are described by processes that naturally involve strictly non-negative variables such as chemical concentrations, densities, biological populations, or energy levels. These processes are often modeled using systems of ordinary differential equations (ODEs), especially ones that are structured as production-destruction systems, where components engage with each other through transfer, conversion, or decay processes. Two fundamental requirements for the numerical simulation of such systems are positivity, which ensures that all variables remain non-negative throughout the numerical trajectory, and conservation, which ensures that the linear invariants of the system---such as total mass---are preserved at any time.

Positivity of numerical solutions is not only a physical faithfulness requirement, but also a mathematical necessity to avoid producing non-physical or unstable dynamics. As shown by Hundsdorfer and Verwer \cite{hundsdorfer2013numerical}, even simple reaction systems such as $A + B \xrightarrow{\text{k}} C$, where $A(t)$, $B(t)$, and $C(t)$ denote species concentrations governed by mass-action kinetics, can exhibit severe numerical issues if positivity is violated. They show that negative initial concentrations can lead to unstable solutions that blow up in finite-time, even though the exact solution remains well-behaved for any non-negative initial values.  From a computational standpoint, this implies that even a small negative value introduced by a numerical error can cascade into instability, especially in stiff or nonlinear systems. Furthermore, positivity violations may disrupt conservation laws and invalidate any meaningful physical interpretation of the simulation results \cite{hundsdorfer2013numerical}. Therefore, time integration methods for such systems must be carefully constructed to preserve positivity, to avoid results that are physically meaningless or numerically divergent.

However, standard numerical integration methods, including Runge-Kutta schemes, do not guarantee these properties when applied to stiff or nonlinear systems. This can lead to non-physical results such as negative concentrations or non-conservation of mass under naive fixes such as clipping. To address these limitations, a variety of positivity-preserving schemes have been developed. The most widely used approaches are modified Patankar methods \cite{Burchard_2003_high-order,Burchard_2005_application} and nonstandard finite difference schemes including geometric conservative (GeCo) integrators \cite{Martiradonna_2020_geco, Izgin_2023_on-the-dynamics}. These methods include algebraic modifications, scaling techniques, or stability-aware formulations to maintain both positivity and invariant preservation. While these approaches can guarantee positivity, they often suffer from order reduction, additional computational cost, or difficulty achieving high orders of accuracy, particularly for stiff systems.

In this paper, we propose a general positivity-preserving correction framework that can be applied as a predictor-corrector post-processing step to existing time integration methods. The key idea is to use the underlying integrators, such as singly diagonally implicit Runge-Kutta (SDIRK) schemes \cite{wanner1996solving}, as solution predictors, and then apply algebraic corrections clipping negative components and applying scaling matrices to enforce positivity and conservation. This approach is simple and modular, requiring no modifications of the base solvers. Similar such correction procedures have been used in the application to the Shallow Water Equations \cite{meisterUnconditionallyPositiveImplicit2014a}.

The proposed approach is tested on a diverse set of benchmark problems: the stiff Robertson reaction system \cite{Blanes_2022_positivity,wanner1996solving}, the mitogen-activated protein kinase (MAPK) cascade \cite{Blanes_2022_positivity,hadavc2017minimal}, the stratospheric chemistry model \cite{Sandu_2001_positiveChemistry,Sandu_2002_positivity-Favoring,Blanes_2022_positivity}, and a nonlinear dispersive partial differential equation, the Korteweg-De Vries (KdV) equation \cite{Jager_2011_originKdV,drazin1989solitons}. These examples span linear and nonlinear PD structures and include both ODE and PDE settings. Our experiments demonstrate that the correction mechanism enforces positivity and conservation for highly stiff multiscale problems. It also maintains, in practice, the expected order of accuracy for SDIRK integrators and improves solver robustness with minimal overhead. We also identify several limitations of the approach: for the KdV problem, applying corrections at all stages distorted the dynamics, and for explicit Runge-Kutta methods the correction reduced convergence to first order. These results delineate the regime of effectiveness of the framework.

The remainder of this paper is organized as follows. Section \ref{sec:lit_review} reviews existing work on production-destruction systems, existing positivity-preserving schemes, and SDIRK methods. Section \ref{sec:theoretical_background} describes the proposed correction framework and its theoretical properties. Section \ref{sec:test_problems_and_methods} introduces the test problems and the numerical methods used in the experiments. Section \ref{sec:results} presents numerical experiments on a range of stiff problems, and confirms positivity preservation, invariant preservation, order of accuracy, and efficiency of the approach. Section \ref{sec:conclusions} summarizes the main findings of this work.

\section{Review of literature on positive time integration} 
\label{sec:lit_review}

Numerical methods for ordinary differential equations must always preserve physical invariants such as the system's total mass or energy levels. This is especially important when modeling real-world processes and evolutions where these components cannot be negative. However, classical numerical methods, such as Runge-Kutta, may require severe time step restrictions to preserve positivity when applied to stiff production-destruction systems. Naive positivity corrections can, in turn, destroy mass conservation and lead to non-physical solutions. This has led to the development of specific positivity preserving schemes, discussed next.

\subsection{Patankar-type methods}

\emph{Production-destruction systems (PDS)} describe the time evolution of $N$ interacting species through balanced production and destruction processes \cite{Burchard_2003_high-order,Burchard_2005_application}. They naturally arise in many applications, such as chemical kinetics, biochemical signaling networks, discrete hyperbolic conservation laws, and ecological models. In these systems, the value of each variable (e.g., the concentration of each species) changes due to production from other species and destruction into others.

A general PDS can be written as
\begin{subequations}
\label{eqn:PDS}
\begin{equation}
\label{eqn:PDS-dynamics}
\frac{\textnormal{d} \mathbf{y}_i}{\textnormal{d}t} = P_i(\mathbf{y}) - D_i(\mathbf{y}), \quad i = 1, \dots, N, \quad t \ge t_0,
\end{equation}
where $P_i(\mathbf{y}) \ge 0$ and $D_i(\mathbf{y}) \ge 0$ denote the total production and destruction rates of species $i$. Moreover, if a component is zero then its destruction rate is also zero, $\mathbf{y}_i = 0$  $\Rightarrow$ $D_{i}(\mathbf{y})=0$.

A PDS is called \emph{conservative} if production and destruction are pairwise balanced, i.e.,
\begin{equation}
\label{eqn:pairwise-balance}
P_i(\mathbf{y}) = \sum_{j=1}^N p_{i,j}(\mathbf{y}), \quad
D_i(\mathbf{y}) = \sum_{j=1}^N d_{i,j}(\mathbf{y}), \quad
p_{i,j}(\mathbf{y}) = d_{j,i}(\mathbf{y})~\forall\, i,j,\mathbf{y},
\end{equation}
\end{subequations}
where $d_{i,j}(\mathbf{y}) \ge 0$ is the rate at which the $i$-th constituent transforms into the $j$-th component, while $p_{i,j}(\mathbf{y}) \ge 0$ is the corresponding production rate of the $i$-th component due to contributions from the $j$-th species \cite{Kopecz_2018_on-order}. 

Such pairwise balance \eqref{eqn:pairwise-balance} in \eqref{eqn:PDS-dynamics} directly implies total mass conservation:
\[
\frac{\textnormal{d}}{\textnormal{d}t} \sum_{i=1}^N \mathbf{y}_i = \sum_{i,j=1}^N p_{i,j}(\mathbf{y}) - \sum_{i,j=1}^N d_{i,j}(\mathbf{y}) \equiv 0 
\quad \Rightarrow \quad 
\sum_{i=1}^N \mathbf{y}_i(t) = \text{const}.
\]
Moreover, if the initial data $\mathbf{y}(0) \ge 0$, the exact solution of a PDS remains non-negative for all $t>0$.

Since $P_i(\mathbf{y}) \ge 0$ and the destruction rate of a species vanishes whenever that species concentration is zero, $\mathbf{y}_i = 0 \Rightarrow D_i(\mathbf{y}) = 0$, it follows that $\frac{\mathrm{d}\mathbf{y}_i}{\mathrm{d}t} \ge 0$ for $\mathbf{y}_i \to 0$. Therefore, the set of non-negative states is invariant under the PDS dynamics, and solutions starting from non-negative initial data, $\mathbf{y}(t_0) \succeq 0$, remain non-negative for all $t \ge t_0$ \cite{Burchard_2003_high-order}.

However, standard explicit and implicit time integrators may violate these structural properties, producing negative concentrations or artificial mass gain/loss when large time steps are used. This observation has motivated the development of structure-preserving time integration methods designed to enforce positivity and conservation at the discrete level.

Among the earliest and most influential of these approaches are Patankar-type schemes, originally developed for chemical kinetics. Notably, the Modified Patankar-Euler (MPE) and Modified Patankar-Runge-Kutta (MPRK) schemes introduced by \cite{Burchard_2003_high-order} and later extended by \cite{Burchard_2005_application} guarantee unconditional positivity and conservation for stiff systems. These schemes modify destruction terms to depend linearly on the current solution, resulting in $M$-matrix structures that ensure non-negativity.

Based on this, Kopecz and Meister \cite{Kopecz_2018_on-order} derived order conditions for MPRK schemes. They introduced the second-order MPRK22 family and showed that Patankar-weight denominators play an important role in improving accuracy. Their following work \cite{Kopecz_2019_on-the-existence} demonstrated the non-existence of third-order three-stage MPRK schemes using standard Patankar weights. In \cite{Kopecz_2018_unconditionally}, they derived necessary and sufficient conditions for third-order MPRK schemes and introduced the first family of such methods, namely four-stage third-order MPRK43 schemes. They also noted that the existence of a third-order three-stage MPRK scheme remains an open question. Subsequently, Izgin et al. \cite{Izgin_2022_on-lyapunov, Izgin_2022_on-the-stability} developed a Lyapunov-based framework for analyzing the stability of such schemes, proving the stability of MPRK22 methods.

For higher-order positivity preserving {\"O}ffner and Torlo \cite{offnerArbitraryHighorderConservative2020} developed arbitrary order deferred correction based schemes. For MPRK schemes order conditions for arbitrary order have been derived \cite{izginOrderConditionsRungeKuttalike2025}.
However, third-order accuracy cannot be achieved with only three stages under standard Patankar weights, and requires additional stages and modified denominators.

\subsection{Nonstandard finite difference methods}

In order to address higher-order accuracy for positivity-preserving schemes, Martiradonna et al. \cite{Martiradonna_2020_geco} developed the Geometric Conservative (GeCo) and modified GeCo (mGeCo) methods. These methods are explicit, positive, and linear-invariant conservative, and are nonstandard finite difference schemes. Order conditions for methods with solution dependent coefficients are derived in \cite{izginOrderConditionsRungeKuttalike2025}. These integrators show stability and their suitability for stiff biochemical systems. However, higher order variants such as GeCo2 show bounded stability, limiting their use in highly stiff problems \cite{Izgin_2023_on-the-dynamics}.

\subsection{Strong stability preserving methods}

Recent works have focused on combining strong stability frameworks (SSPs) with the Patankar approach to handle problems that involve both convection and stiff reactive source terms. Such problems frequently arise in convection-reaction systems, chemically reacting flows, and hyperbolic conservation laws with stiff kinetics. In these cases, maintaining positivity and mass conservation is critical even under large time-step restrictions.

Huang, Juntao et al. \cite{Huang_2023_on-the-stability} introduced strong-stability-preserving (SSP) modified Patankar-Runge-Kutta schemes, which integrate the unconditional positivity and conservation properties of Patankar methods into the SSP Runge-Kutta framework. These schemes preserve positivity and conservation under the standard CFL conditions dictated by convection, while remaining independent of the stiffness of the source terms.
Stability is analyzed through the Lyapunov-based framework, offering parameter-dependent guarantees while maintaining high-order accuracy in convection-reaction systems.

\subsection{Exponential and Magnus integrators}

Other positivity-preserving strategies include exponential and Magnus integrators \cite{Blanes_2022_positivity}. These methods exploit the inherent graph Laplacian structures present in certain ODEs to preserve positivity and mass conservation, thereby bypassing the order limitations typically encountered in classical methods. Their second- and third-order variants show robust performance on stiff problems while following the principles of geometric integration.

\subsection{Optimization based approaches}

A different approach to obtaining positive solutions was taken in \cite{Sandu_2001_positiveChemistry,Sandu_2002_positivity-Favoring}, where it was observed that the set of non-negative values that obey all linear invariants forms a convex set in the state space. Projecting the solution produced by a standard numerical scheme onto this convex set results in a new solution that is non-negative, conservative, and at least as accurate as the original standard solution. This approach checks all the desired solution properties, but increases computational cost, since one optimization problem needs to be solved at each step for solution projection.

\subsection{Summary}

In summary, the positive time integration field has evolved from low-order, robust schemes to a wide spectrum of methods ensuring accuracy, structure preservation, and efficiency. The ongoing challenge is to extend these properties to higher-order schemes applicable to complex, stiff systems without losing theoretical guarantees or practical usability.

This paper introduces a Patankar predictor-corrector strategy that can be applied to stiffly accurate implicit Runge-Kutta (RK) and Singly-Diagonally Implicit Runge-Kutta (SDIRK) methods. The approach has two-phases: an initial predictor using standard RK or SDIRK steps, followed by a corrector that ensures positivity and conservation through a clip-and-scale mechanism. Stage values are clipped to eliminate negativity and then scaled using diagonal matrices that preserve the structure of systems. This method allows the application of classical time integration methods while preserving the properties required for the physical system and formal order of accuracy.

\section{Theoretical background}
\label{sec:theoretical_background}

\subsection{Graph Laplacian systems}

We consider systems with a graph Laplacian structure of the form \cite{Blanes_2022_positivity}:
\begin{equation}
\label{eqn:graph-laplacian-system}
\mathbf{y}' = \mathbf{f}(t,\mathbf{y}) = \mathbf{G}(t,\mathbf{y})\, \mathbf{y}, \quad \mathbf{y}(t_0) = \mathbf{y}_0 \succeq 0 \in \mathbb{R}^d, \quad \mathbf{G}(t,\mathbf{y}) \in \mathbb{R}^{d \times d},
\end{equation}
where the symbol $\succeq 0$ denotes component-wise non-negativity. Structure \eqref{eqn:graph-laplacian-system} allows to model systems that enjoy positivity (state variables must remain non-negative for all time) and conservation laws (some linear invariants remain constant throughout the evolution).

The matrix $\mathbf{G}(t,\mathbf{y})$ in \eqref{eqn:graph-laplacian-system} has the following properties.

\begin{assumption}[Stability]
\label{ass:stability}
The matrix $\mathbf{G}(t,\mathbf{y})$ has eigenvalues with non-positive real parts for any $t,\mathbf{y}$.
\end{assumption}

\begin{assumption}[Sign structure] 
\label{ass:positivity}
The matrix $\mathbf{G}(t,\mathbf{y})$ has non-positive diagonal entries, and non-negative off-diagonal entries:
\begin{equation}
\label{eqn:G-entries-sign}
\begin{cases}
\mathbf{G}_{i,i}(t,\mathbf{y}) \le 0, & \forall\, i = 1,\dots,d, \\
\mathbf{G}_{i,j}(t,\mathbf{y}) \ge 0, &\forall\, i, j = 1,\dots,d ~~\textnormal{with}~~ i \ne j, 
\end{cases}
\quad \forall\, t \ge t_0,\, \forall\, \mathbf{y}\succeq 0.
\end{equation} 
This assumption guarantees that any component which becomes zero cannot be further destroyed, thus ensuring non-negativity of the exact solution of \eqref{eqn:graph-laplacian-system}.
\end{assumption}

\begin{assumption}[Strong sign entries assumption] 
\label{ass:strong-positivity}
The matrix $\mathbf{G}(t,\mathbf{y})$ has non-positive diagonal entries, and non-negative off-diagonal entries, for any argument:

\begin{equation}
\text{Entry sign property} \quad 
\eqref{eqn:G-entries-sign} \quad \text{holds for } \quad \forall\, t \ge t_0 , \, \forall\, \mathbf{y} \in \mathbb{R}^d.
\label{eqn:strong-G-entries-sign}
\end{equation}

Note that \eqref{eqn:strong-G-entries-sign} states that one does not need $\mathbf{y} \succeq 0$ for the sign entries property to hold.
\end{assumption}

As a consequence of Assumption \ref{ass:positivity} (and of the stronger Assumption \ref{ass:strong-positivity}) the solution of \eqref{eqn:graph-laplacian-system} is non-negative:
\begin{equation}
\label{eqn:positive-solution}
\begin{split}
\mathbf{y}(t_0) \succeq 0 ~~\textnormal{with}~~ \mathbf{y}_i(t_0) = 0 \quad & \Rightarrow \quad \mathbf{y}'_i(t_0) = \sum_{j \ne i} \mathbf{G}_{i,j}\big((t_0,\mathbf{y}(t_0)\big)\,\mathbf{y}_j(t_0) \ge 0 \\
& \Rightarrow \quad \mathbf{y}(t) \succeq \mathbf{0}, ~~\forall\, t \ge t_0.
\end{split}
\end{equation}

\begin{assumption}[Linear invariants] 
\label{ass:invariants}
There exist vectors $\mathbf{w}_1,\dots,\mathbf{w}_M \in \mathbb{R}^d$ with $M < d$ such that:
\begin{equation}
\label{eqn:linear-invariants}
\mathbf{w}_j^T\,\mathbf{G}(t,\mathbf{y}) = \mathbf{0}, \quad j=1,\dots,M, \quad \forall\, t,\mathbf{y}.
\end{equation}
\end{assumption}
As a consequence of Assumption \ref{ass:invariants} the vectors $\mathbf{w}_j$ are linear invariants of the ODE system \eqref{eqn:graph-laplacian-system}:
\begin{equation}
\label{eqn:invariant}
\mathbf{w}_j^T\,\mathbf{y}' = \mathbf{0} \quad \Rightarrow \quad \mathbf{w}_j^T\,\mathbf{y}(t) = \mathbf{w}_j^T\,\mathbf{y}_0 = \textnormal{const}, \quad j=1,\dots,M, \quad \forall\, t \ge t_0.
\end{equation}

\begin{remark}[Graph Laplacian]
\label{rem:graph-laplacian}
The matrix $\mathbf{G}(t,\mathbf{y})$ is called a graph Laplacian if it satisfies two key structural properties:
\begin{enumerate}
    \item Sign structure Assumption \ref{ass:positivity}, and
    \item 
    Assumption \ref{ass:invariants} with $\mathbf{w}_1 = \boldsymbol{1}_{d}$, where $\boldsymbol{1}_d \in \mathbb{R}^d$ is the column vector of ones. Consequently, each column entries sums up to zero, $\sum_{i=1}^d \mathbf{G}_{i,j}(t,\mathbf{y}) = 0$ for all $j$, and the system preserves the total mass $\sum_{i=1}^d y_i(t) = \sum_{i=1}^d y_i(0)$ \(\forall t \ge t_0\).
\end{enumerate}
Systems \eqref{eqn:graph-laplacian-system} under consideration are part of the class of graph Laplacian systems \cite{Blanes_2022_positivity}.
\end{remark}

To design positivity-preserving integrators, we rely on some fundamental matrix properties.

\begin{remark}[Positivity of the inverse]

Let $\mathbf{I}_{d \times d} \in \mathbb{R}^{d \times d}$ denote the identity matrix. 
For any step size $h \ge 0$, the matrix $\mathbf{I}_{d \times d} - h\, \mathbf{G}$ has positive diagonal entries, and non-positive off-diagonal entries, due to Assumption \ref{ass:positivity}. Moreover, the eigenvalues $\sigma(\mathbf{I}_{d \times d} - h\, \mathbf{G}) \subset \mathbb{C}^{+}$ due to Assumption \ref{ass:stability}. Consequently $\mathbf{I}_{d \times d} - h\, \mathbf{G}$ is an $M$-matrix \cite{Blanes_2022_positivity} and
\[
(\mathbf{I}_{d \times d} - h\, \mathbf{G})^{-1} \succeq \mathbf{0}.
\]
This property is proven in \cite{Blanes_2022_positivity} for $M=1$ and $\mathbf{w}_1= \boldsymbol{1}_{d}$.
\end{remark}

\begin{remark}[Scaling the columns of graph Laplacian matrix]
\label{rem:scaling}
Consider a diagonal matrix $\boldsymbol{\Sigma}$ with non-negative entries:
\begin{subequations}
\label{eqn:scaling}
\begin{equation}
\label{eqn:sigma}
 \boldsymbol{\Sigma} = \operatorname{diag}_{i=1,\dots,d} \sigma_{i,i}, \quad \sigma_{i,i} \ge 0~~\forall\,i.
\end{equation}
Multiplying the matrix $\mathbf{G}$ \eqref{eqn:graph-laplacian-system} from the right by $\boldsymbol{\Sigma}$ scales each column by the non-negative diagonal entry:
\begin{equation}
\label{eqn:G-scaled}
\overline{\mathbf{G}} \coloneqq \mathbf{G} \cdot \boldsymbol{\Sigma}, \quad \overline{\mathbf{G}}_{i,j} = \mathbf{G}_{i,j}\,\sigma_{j,j}~~~\forall\, i,j.
\end{equation}
We see immediately that the entries of the scaled matrix \eqref{eqn:G-scaled} have the same signs as the entries of the original matrix \eqref{eqn:G-entries-sign},
\begin{equation}
\overline{\mathbf{G}}_{i,i}(t,\mathbf{y}) \le 0, ~~ \forall\, i; \quad 
\overline{\mathbf{G}}_{i,j}(t,\mathbf{y}) \ge 0, ~~\forall\, i \ne j. 
\end{equation}
Moreover, the scaled matrix \eqref{eqn:G-scaled} admits the same left kernel vectors as the original matrix \eqref{eqn:linear-invariants}, 
\begin{equation}
\mathbf{w}_j^T\,\overline{\mathbf{G}} = \mathbf{0} \quad \forall\,j.
\end{equation}
\end{subequations}
\end{remark}

\begin{remark}[Linear combinations of graph Laplacian matrices]
\label{rem:linear-combinations}
Let $\mathbf{G}_1, \dots, \mathbf{G}_s$ be matrices whose entries have the sign structure \eqref{eqn:G-entries-sign}, and that all have the same left kernel vectors \eqref{eqn:linear-invariants}. Let $b_1,\dots,b_s \ge 0$ be non-negative numbers. Then the linear combination
\begin{equation}
\label{eqn:G-linear-combination}
\overline{\mathbf{G}} = b_1\,\mathbf{G}_1 + \dots + b_s\,\mathbf{G}_s
\end{equation}
has the entry sign property \eqref{eqn:G-entries-sign} and shares the same left kernel vectors \eqref{eqn:linear-invariants} as the individual matrices. Consequently, a non-negative linear combination of graph Laplacian matrices \eqref{eqn:G-linear-combination} is itself a graph Laplacian matrix.
\end{remark}

Remarks \ref{rem:scaling} and \ref{rem:linear-combinations} lead to the following.

\begin{remark}[Linear combinations of scaled graph Laplacian matrices]
\label{rem:scaled-linear-combinations}
Let $\mathbf{G}_1, \dots, \mathbf{G}_s$ be graph Laplacian matrices, with the same left kernel vectors \eqref{eqn:linear-invariants}. Let $b_1,\dots,b_s \ge 0$ be non-negative numbers. Let $\boldsymbol{\Sigma}_1,\dots,\boldsymbol{\Sigma}s$ be diagonal scaling matrices with non-negative diagonal entries \eqref{eqn:sigma}. Then the non-negative linear combination of the scaled matrices
\begin{equation}
\label{eqn:G-scaled-linear-combination}
\overline{\mathbf{G}} = b_1\,\mathbf{G}_1\,\boldsymbol{\Sigma}_1 + \dots + b_s\,\mathbf{G}_s\,\boldsymbol{\Sigma}_s
\end{equation}
is itself a graph Laplacian matrix with the same left kernel vectors \eqref{eqn:linear-invariants}.
\end{remark}

\subsection{Production-destruction systems in graph Laplacian form}

We consider conservative production-destruction models \eqref{eqn:PDS}, where the production rates $p_{i, j}$ and destruction rates $d_{i,j}$ have the following properties:
\begin{subequations}
\label{eqn:PDS-properties}
\begin{alignat}{3}
\label{eqn:PDS-property-a}
p_{i,j}(\mathbf{y}) &\equiv d_{j,i}(\mathbf{y}), && \forall i,j=1,\dots,N, \quad \forall \mathbf{y} \in \mathbb{R}^d, \\
\label{eqn:PDS-property-b}
d_{i,j}(\mathbf{y}) &= \ell_{i,j}(\mathbf{y})\, \mathbf{y}_{i}, \quad&& \forall i,j=1,\dots,N, \quad \forall\, \mathbf{y} \in \mathbb{R}^d, \\
\label{eqn:PDS-property-c}
\ell_{i,j}(\mathbf{y}) &\ge 0, && \forall i,j=1,\dots,N, \quad \forall\, \mathbf{y} \succeq 0.
\end{alignat}
\end{subequations} 
The coefficients $\ell_{i,j}(\mathbf{y}) \ge 0$ are non-negative transition rates, meaning that destruction terms can never generate mass or become negative. In particular, \eqref{eqn:PDS-property-b} implies
\[
\mathbf{y}(t) \succeq 0,\,\mathbf{y}_i(t) = 0 \quad \xRightarrow{\eqref{eqn:PDS-property-b}} \quad d_{i,j}(\mathbf{y}) = 0,~\forall\,j 
\quad \xRightarrow{\eqref{eqn:pairwise-balance}} \quad D_{i}(\mathbf{y}) = 0,
\]
so a vanishing species cannot be further destroyed, ensuring positivity.

PDS systems \eqref{eqn:PDS-properties} have the detailed form \cite{Blanes_2022_positivity}:
\begin{equation}
\label{eqn:production-destruction}
\begin{split}
\mathbf{y}'_i & = \sum_{j=1}^N p_{i,j}(\mathbf{y}) - \sum_{j=1}^N d_{i,j}(\mathbf{y}) \\
&\xlongequal{\eqref{eqn:PDS-property-a}} \sum_{j=1}^N d_{j,i}(\mathbf{y}) - \sum_{j=1}^N d_{i,j}(\mathbf{y}) \\
&\xlongequal{\eqref{eqn:PDS-property-b}} \sum_{j=1}^N \ell_{j,i}(\mathbf{y})\, \mathbf{y}_{j} - \sum_{j=1}^N \ell_{i,j}(\mathbf{y})\, \mathbf{y}_{i}.
\end{split}
\end{equation}
Using the matrix of non-negative transition rates 
\[
\mathbf{L}(\mathbf{y}) \coloneqq \big[ \ell_{i,j}(\mathbf{y}) \big]_{1 \le i,j \le N}
\]
the system \eqref{eqn:production-destruction} can be written in matrix form as
\begin{equation}
\label{eqn:pd-G-definition}
\begin{split}
\mathbf{y}' &= \mathbf{L}^T\, \mathbf{y} - \textnormal{diag}(\mathbf{L}\,\boldsymbol{1}_{d})\, \mathbf{y}
= \big( \mathbf{L}^T - \textnormal{diag}(\mathbf{L}\,\boldsymbol{1}_{d}) \big)\, \mathbf{y} \\
 &= \mathbf{G}(t,\mathbf{y})\, \mathbf{y} \quad \text{with} \quad \mathbf{G}(t,\mathbf{y}) \coloneqq \mathbf{L}^T - \textnormal{diag}(\mathbf{L}\,\boldsymbol{1}_{d}).
\end{split}
\end{equation}
From the definition \eqref{eqn:pd-G-definition} the matrix $\mathbf{G}$ entries have the following sign properties:
\begin{subequations}
\label{eqn:pd-G-properties}
\begin{eqnarray}
\label{eqn:pd-G-ii}
\mathbf{G}_{i,i} &=& \big( \mathbf{L}^T- \textnormal{diag}(\mathbf{L}\,\boldsymbol{1}_{d}) \big)_{i,i} = \ell_{i,i} - \sum_{j=1}^N \ell_{i,j} \\
\nonumber
&=& - \sum_{j \ne i} \ell_{i,j} \le 0, \quad \forall\,i,\\
\label{eqn:pd-G-ij}
\mathbf{G}_{i,j} &=& \ell_{j,i} \ge 0, \quad \forall\,j \ne i.
\end{eqnarray}

Thus, all diagonal entries are non-positive because they represent the total destruction rate of species $i$ taken with a negative sign, while all off-diagonal entries are non-negative because they correspond to production rates from other species into $i$.
Hence, the sign structure in Equation \eqref{eqn:G-entries-sign} is satisfied.

Moreover,
\begin{eqnarray}
\sum_{i=1}^N \mathbf{G}_{i,j} &=& \mathbf{G}_{i,i} + \sum_{i \ne j} \mathbf{G}_{i,j} \\
&\stackrel{\eqref{eqn:pd-G-properties}}{=} & - \sum_{j \ne i} \ell_{i,j} + \sum_{i \ne j} \ell_{j,i} = 0, \\
 \Leftrightarrow && \quad 
\boldsymbol{1}_{d}^T\,\mathbf{G} \equiv \boldsymbol{0}_{d}^T.
\end{eqnarray}
\end{subequations}
Therefore, \(\mathbf{G}(t,\mathbf{y})\) satisfies the zero column sum property of Remark \ref{rem:graph-laplacian}.
So, by the argument in Equation \eqref{eqn:positive-solution}, no negative values can appear in the exact solution since:

\[
\mathbf{y}(t) \succeq 0,\,\mathbf{y}_i(t) = 0  \quad \Rightarrow \quad \mathbf{y}'_i(t) = \sum_{j \neq i} \mathbf{G}_{i,j}\,\mathbf{y}_j \ge 0.
\] 

By Assumptions \ref{ass:invariants} and \eqref{eqn:invariant}, the total mass is conserved:
\[
\frac{\mathrm{d}}{\mathrm{d}t} \big( \boldsymbol{1}_d^T \mathbf{y}(t) \big) 
= \boldsymbol{1}_d^T \mathbf{y}'(t) 
= \boldsymbol{1}_d^T\, \mathbf{G}(t,\mathbf{y}) \, \mathbf{y}(t) =  0.
\]

\begin{remark}[Other linear invariants] 
\label{rem:invariants-other}
The system \eqref{eqn:pd-G-definition} can also admit other linear invariants \eqref{eqn:linear-invariants}.
Specifically, consider a vector $\mathbf{w} \in \mathbb{R}^d$ such that:
\begin{equation}
\label{eqn:linear-invariants-general}
\begin{split}
& \left( \sum_{j=1}^N  \ell_{i,j}\,\mathbf{w}_j \right)
= \mathbf{w}_i\,\sum_{j=1}^N  \ell_{i,j}, ~~ \forall i \quad 
\xLeftrightarrow{\eqref{eqn:production-destruction}} \quad
\mathbf{w}^T\, \mathbf{G} = 0,
\end{split}
\end{equation}
which implies that $\mathbf{w}^T \,\mathbf{y}(t) = $const. Clearly \eqref{eqn:linear-invariants-general} holds for $\mathbf{w} = \boldsymbol{1}_d$, but the system \eqref{eqn:pd-G-definition}  may have additional linear invariants $\mathbf{w}$.
\end{remark}

\begin{remark}[All linear invariants] 
\label{rem:invariants-all}
Clearly, all left null vectors of the matrix $\mathbf{G}$ are linear invariants of \eqref{eqn:graph-laplacian-system}:
\[
\mathbf{w}^T\, \mathbf{G}(t,\mathbf{y}) = 0~~\forall t,\mathbf{y} \quad \xRightarrow{\eqref{eqn:graph-laplacian-system}} \quad 
\mathbf{w}^T\,\mathbf{y}' = \mathbf{w}^T\,\mathbf{G}(t,\mathbf{y})\,\mathbf{y} = 0.
\]
However, not all linear invariants of the nonlinear system of \eqref{eqn:graph-laplacian-system} are necessarily left null vectors of $\mathbf{G}$
\[
\mathbf{w}^T\,\mathbf{G}(t,\mathbf{y}(t))\,\mathbf{y}(t) = 0 ~~\text{along solution}~~\mathbf{y}(t) 
\quad \nRightarrow \quad \mathbf{w}^T\, \mathbf{G}(t,\mathbf{y}) = 0~~\forall\, t,\mathbf{y},
\]
i.e., the nonlinear system \eqref{eqn:graph-laplacian-system} can have additional linear invariants that may not be preserved by the methods discussed herein.
\end{remark}

Thus, production-destruction systems \eqref{eqn:production-destruction} are a special case of nonlinear graph Laplacian systems that satisfy the positivity and conservation properties required for Assumption \ref{ass:positivity}.

\begin{remark}[A different form of production-destruction]
\label{rem:D-formulation}
Using the matrix of non-negative destruction rates 
\[
\mathbf{D}(\mathbf{y}) \coloneqq \big[ d_{i,j}(\mathbf{y}) \big]_{1 \le i,j \le N}
\]
the system \eqref{eqn:production-destruction} can be written in matrix form as
\begin{equation}
\label{eqn:pd-D-definition}
\begin{split}
\mathbf{y}' &= \mathbf{H}(\mathbf{y})\, \boldsymbol{1}_{d} \quad \text{with} \quad \mathbf{H}(\mathbf{y}) \coloneqq  \mathbf{D}^T(\mathbf{y}) - \textnormal{diag}(\mathbf{D}(\mathbf{y})\,\boldsymbol{1}_{d}).
\end{split}
\end{equation}
Note that $\mathbf{H}(\mathbf{y})$ is also a graph Laplacian matrix:
\[
\boldsymbol{1}_{d}^T\,\mathbf{H}(\mathbf{y}) \equiv \boldsymbol{0} \quad \Rightarrow \quad \boldsymbol{1}_{d}^T\, \mathbf{y}(t) = \textnormal{const},
\]
and:
\[
\mathbf{H}_{i,i}(\mathbf{y}) = - \sum_{j \ne i} d_{i,j}(\mathbf{y}) \le 0,~~ \forall\,i; \quad
\mathbf{H}_{i,j}(\mathbf{y}) = d_{j,i}(\mathbf{y}) \ge 0,~~ \forall\,j \ne i.
\]
\end{remark}

\subsection{The new positivity preserving predictor-corrector Runge-Kutta methods}
\label{subsec:DIRK-solution-PC}

We now seek to solve numerically the system \eqref{eqn:graph-laplacian-system} that satisfies Assumption \ref{ass:positivity} (or the stronger Assumption \ref{ass:strong-positivity}). Assume that we have computed a numerical approximation $\mathbf{y}_n \succeq 0$ for the exact solution $\mathbf{y}(t_n) \succeq 0$.

\subsubsection{Clipping and scaling operations}

The next step solution is computed by a SDIRK method with only non-negative weights $b_i \ge 0$ as follows:
\begin{subequations}
\label{eqn:SDIRK}
\begin{eqnarray}
\label{eqn:SDIRK-stage}
\mathbf{Y}_i & = & \mathbf{y}_n + h\,\sum_{j=1}^i\,a_{i,j}\,\mathbf{G}(\mathbf{Y}_j)\, \mathbf{Y}_j, \quad i=1,\dots,s; \\
\label{eqn:SDIRK-solution}
\mathbf{y}_{n+1} & = & \mathbf{y}_n + h\,\sum_{j=1}^s\,b_{j}\,\mathbf{G}(\mathbf{Y}_j)\, \mathbf{Y}_j.
\end{eqnarray}
\end{subequations}
We consider methods \eqref{eqn:SDIRK} that are stiffly accurate, meaning that the last stage is the next step solution, i.e., $b_{j} = a_{s,j} \ge 0$ for all $j$ and $\mathbf{y}_{n+1} = \mathbf{Y}_{s}$.

We start by defining special clipping and scaling operations.

\begin{definition}[Clipping]
\label{def:clipping}
Let $\mathbf{Y} \in \mathbb{R}^d$. The following clipping operation sets the negative entries of a vector to zero: 

\begin{equation}
\label{eqn:SDIRK-clipping}
\breve{\mathbf{Y}}_\ell \coloneqq \left(\textnormal{clip}(\mathbf{Y})\right)_{\ell} \coloneqq \max(\mathbf{Y}_\ell,0), 
\quad \ell=1,\dots,d.
\end{equation}

\end{definition}
\begin{lemma}
\label{lemma:clipping-errors}
For all monotonic norms \(\Vert \cdot \Vert\)  \cite{bhatiaMatrixAnalysis1997}, e.g p-norms, then clipping does \textbf{not} increase the local errors of stages and solution constructed from \eqref{eqn:SDIRK}.
\end{lemma}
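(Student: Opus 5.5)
The plan is to reduce the claim to a componentwise inequality and then invoke monotonicity of the norm. Let $\mathbf{Y}$ be any stage value or the step solution computed by \eqref{eqn:SDIRK}, and let $\mathbf{Y}^{\ast}$ be the corresponding exact reference quantity against which its local error is measured: the exact solution $\mathbf{y}(t_n+c_i h)$ for stages, or $\mathbf{y}(t_{n+1})$ for the solution, with the step started from $\mathbf{y}_n=\mathbf{y}(t_n)\succeq 0$. The only structural fact needed is that the reference is non-negative, $\mathbf{Y}^{\ast}\succeq 0$. This follows from \eqref{eqn:positive-solution} under Assumption \ref{ass:positivity}, since the exact trajectory starting from non-negative data stays non-negative.

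The key step is a scalar inequality applied entry by entry. For each $\ell=1,\dots,d$ I will show
\[
\bigl|\max(\mathbf{Y}_\ell,0)-\mathbf{Y}^{\ast}_\ell\bigr| \le \bigl|\mathbf{Y}_\ell-\mathbf{Y}^{\ast}_\ell\bigr| .
\]
There are two cases. If $\mathbf{Y}_\ell\ge 0$, clipping does nothing and equality holds. If $\mathbf{Y}_\ell<0\le \mathbf{Y}^{\ast}_\ell$, the left side equals $\mathbf{Y}^{\ast}_\ell$ and the right side equals $\mathbf{Y}^{\ast}_\ell-\mathbf{Y}_\ell>\mathbf{Y}^{\ast}_\ell$. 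Equivalently, clipping is the Euclidean projection onto the closed convex cone $\{\mathbf{y}\succeq 0\}$ acting coordinatewise, and coordinatewise projection onto an interval containing the target cannot move a point farther from it. This yields $|\breve{\mathbf{Y}}-\mathbf{Y}^{\ast}|\preceq|\mathbf{Y}-\mathbf{Y}^{\ast}|$ in absolute value, componentwise.

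To finish, I apply the definition of a monotonic norm \cite{bhatiaMatrixAnalysis1997}: $|\mathbf{x}|\preceq|\mathbf{z}|$ componentwise implies $\Vert\mathbf{x}\Vert\le\Vert\mathbf{z}\Vert$. This gives $\Vert\breve{\mathbf{Y}}-\mathbf{Y}^{\ast}\Vert\le\Vert\mathbf{Y}-\mathbf{Y}^{\ast}\Vert$. The $p$-norms are monotonic (equivalently absolute), so they are covered by the statement.

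The main obstacle is fixing precisely what ``local error'' means for the stages, since the statement leaves this implicit. The argument only requires the comparison quantity to be non-negative. So I will state explicitly that local errors are measured against the exact solution at the stage abscissae, or any other non-negative reference such as the exact stage solution of a positive problem, under the localizing assumption $\mathbf{y}_n=\mathbf{y}(t_n)$. I will also note that this is a statement about clipping a single vector in isolation. When a clipped stage is fed into later stages, the subsequent propagation is not covered by this lemma and has to be handled separately in the order analysis.
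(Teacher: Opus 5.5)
Your proof is correct, but it follows a different and sharper route than the paper. The paper never compares the two errors directly. It uses the stage order $q$ and the order $p$, together with non-negativity of the exact local solution $u(t)\succeq 0$, to bound the negative entries: $-(\mathbf{Y}_i)_\ell \le u_\ell(t_n+c_i h)-(\mathbf{Y}_i)_\ell=\mathcal{O}(h^{q+1})$. By monotonicity it concludes that the clipping perturbation is small, $\Vert\breve{\mathbf{Y}}_i-\mathbf{Y}_i\Vert=\mathcal{O}(h^{q+1})$, and $\mathcal{O}(h^{p+1})$ for $\mathbf{y}_{n+1}$. The triangle inequality then shows the clipped values still have local errors of the same order.

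So the paper's written argument only shows that clipping does not increase the \emph{order} of the local error; the triangle inequality can give at best $\Vert\breve{\mathbf{Y}}-u\Vert\le 2\Vert\mathbf{Y}-u\Vert$. Literal non-increase is only illustrated geometrically, for the Euclidean norm, in Figure~\ref{fig:clipping}.

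Your componentwise inequality $|\max(\mathbf{Y}_\ell,0)-\mathbf{Y}^\ast_\ell|\le|\mathbf{Y}_\ell-\mathbf{Y}^\ast_\ell|$ gives the exact bound $\Vert\breve{\mathbf{Y}}-\mathbf{Y}^\ast\Vert\le\Vert\mathbf{Y}-\mathbf{Y}^\ast\Vert$ for every monotone norm. It needs no asymptotics and does not depend on $q$, $p$, or the method, only on $\mathbf{Y}^\ast\succeq 0$. This matches the lemma's wording better than the paper's proof does.

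What the paper's route states explicitly, and yours does not, is the size of the clipping perturbation $\breve{\mathbf{Y}}-\mathbf{Y}$. The paper reuses that bound later, in the accuracy proposition, the adaptive step-size remark, and the KdV discussion. You get it from the same case analysis: on clipped entries, $|\breve{\mathbf{Y}}_\ell-\mathbf{Y}_\ell|=-\mathbf{Y}_\ell\le\mathbf{Y}^\ast_\ell-\mathbf{Y}_\ell$, so $\Vert\breve{\mathbf{Y}}-\mathbf{Y}\Vert\le\Vert\mathbf{Y}-\mathbf{Y}^\ast\Vert$. Add that line so your version fully replaces the paper's.
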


\begin{proof}

Consider an exact \emph{local} solution $u(t)$ of the system \eqref{eqn:graph-laplacian-system} started from the numerical value $u(t_n) = \mathbf{y}_n \succeq 0$. The properties of the exact solution imply that $u(t) \succeq 0$ for $t_{n} \le t\le t_{n+1}$.

Consider a component \(\ell\) such that \((\mathbf{Y}_i)_{\ell} \le 0\). Since the SDIRK scheme \eqref{eqn:SDIRK} has stage order $q=1$ and the local exact solution is always non-negative we have
\[
-(\mathbf{Y}_i)_{\ell} \le u_{\ell}(t_{n} + c_i h) - (\mathbf{Y}_i)_{\ell} = \mathcal{O}(h^{q+1}).
\]
Similarly, if the solution has a non-positive entry \((\mathbf{y}_{n+1})_{\ell} \le 0\), using the order $p$ of scheme \eqref{eqn:SDIRK} yields
\[
-(\mathbf{y}_{n+1})_{\ell} \le u_{\ell}(t_{n+1} ) - (\mathbf{y}_{n+1})_{\ell}= \mathcal{O}(h^{p+1}).
\]
Given that the norm is monotone and the negative parts of the solution entries are bounded we have
\begin{equation}
    \begin{split}
        \Vert \breve{\mathbf{Y}}_i - \mathbf{Y}_i\Vert &= \mathcal{O}(h^{q+1}),\\
        \Vert \breve{\mathbf{y}}_{n+1} - \mathbf{y}_{n+1}\Vert &= \mathcal{O}(h^{p+1}).
    \end{split}
\end{equation}
This gives the errors of the clipped solutions:
\begin{equation}
    \begin{split}
        \Vert \breve{\mathbf{Y}}_i - u(t_n + c_i\,h)\Vert &\le \Vert \mathbf{Y}_i - u(t_n + c_i\,h)\Vert + \Vert \breve{\mathbf{Y}}_i - \mathbf{Y}_i\Vert = \mathcal{O}(h^{q+1}),\\
        \Vert \breve{\mathbf{y}}_{n+1} - u(t_n + \,h)\Vert &\le \Vert \mathbf{y}_{n+1} - u(t_n + \,h)\Vert + \Vert \breve{\mathbf{y}}_{n+1} - \mathbf{y}_{n+1}\Vert = \mathcal{O}(h^{p+1}).
    \end{split}
\end{equation}
\end{proof}

Figure \ref{fig:clipping} illustrates the effect of clipping in 2D. 
Consider the points black (for exact solution), red (for numerical solution), and blue (for clipped solution). Clipping changes only component $(\mathbf{Y}_i)_{\ell}$ but leaves other components $(\mathbf{Y}_i)_{k}$ untouched, and therefore the red-blue edge is parallel to the $\ell$ axis. Since $u_\ell, u_k \ge 0$, the triangle with black-red-blue vertices has an obtuse angle at  the blue vertex. Therefore the red edge (local error of the original solution) is longer in Euclidian norm than the blue edge (local error of the clipped solution.)

\begin{figure}[htbp]
\centering
\begin{tikzpicture}[scale=1]
\draw[thick, ->] (-0.5,0) -- (4.5,0);
\draw[thick, ->] (0,-1.5) -- (0,3);
\draw [thick,red, -] (1.5,1.5) -- (2.9,-1);
\draw [thick,blue, -] (1.5,1.5) -- (2.9,0);
\draw [thick,purple, ->] (2.85,-1) -- (2.85,-0.1);
\draw (0,3) node[above] {$\mathbf{y}_\ell$};
\draw (4.5,0) node[right] {$\mathbf{y}_k$};
\draw (1.5,1.8) node[circle] {$u(t_n + c_i\,h)$};
\draw (1.3,0.3) node[circle,red] {$\mathcal{O}(h^{q+1})$};
\draw (1.5,1.5) node[circle,fill=black,inner sep=2.5pt] {};
\draw (3,-1) node[circle,left,fill=red,inner sep=2.5pt] {};
\draw (3,-1) node[circle,below] {$\mathbf{Y}_i$};
\draw (3,0) node[circle,left,fill=blue,inner sep=2.5pt] {};
\draw (3,0) node[circle,above] {$\breve{\mathbf{Y}}_i$};
\draw (3.7,-0.5) node[circle,purple] {$\mathcal{O}(h^{q+1})$};
\end{tikzpicture}
\caption{Graphical representation of Lemma \ref{lemma:clipping-errors} proof demonstrates the local truncation errors of the stages of the Runge-Kutta do not increase after clipping is applied.}
\label{fig:clipping}
\end{figure}

\begin{definition}[Ratio scaling]
\label{def:ratio-scaling}
Let $\mathbf{Y}, \, \mathbf{Z} \in \mathbb{R}^d$.
The ratio scaling matrix is a positive semidefinite diagonal matrix, whose diagonal entries are the result of vector component-wise division operation:
\begin{equation}
\label{eqn:ratio-scaling}
\boldsymbol{\Sigma}_{\mathbf{Y} / \mathbf{Z}} \coloneqq \underset{\ell=1,\dots,d}{\operatorname{diag}}\, \frac{\max(\mathbf{Y}_\ell,0)}{\max(\mathbf{Z}_\ell,\varepsilon)} \in \mathbb{R}^{d \times d}.
\end{equation}
The small fixed number $\varepsilon > 0$ in denominator is needed to avoid division by a number too close to zero.
\end{definition}

\begin{remark}[Application of ratio scaling]
\label{rem:ratio-scaling-applied}
We will use Definition \ref{def:ratio-scaling} with scaling vectors $\mathbf{Y}$ and $\mathbf{Z}$ that are two numerical solutions in the interval $[t_n,t_{n+1}]$, and consequently $\mathbf{Z} = \mathbf{Y} + \mathcal{O}(h)$.

We assume that a single component, $\ell$, take values close to zero during the current time step. Numerical stage and solution values have components $\ell$ close to zero, $\mathbf{Y}_\ell \sim 0$ and $\mathbf{Z}_\ell \sim 0$. These numerical values can become negative during the step and need special attention.

Let
\begin{equation}
\widehat{\mathbf{Y}}_\ell = \left(\boldsymbol{\Sigma}_{\mathbf{Y} / \mathbf{Z}}\,\breve{\mathbf{Z}}\right)_{\ell} = 
\begin{cases}
\breve{\mathbf{Y}}_\ell, & \mathbf{Z}_\ell \ge \varepsilon, \\
\breve{\mathbf{Y}}_\ell\, \frac{\mathbf{Z}_\ell}{\varepsilon}, & 0 < \mathbf{Z}_\ell < \varepsilon, \\
0, & \mathbf{Z}_\ell \le 0.
\end{cases}
\end{equation}
The application of scaling on the same vector has the following effect.
\begin{enumerate}
\item For components that are not close to zero
$\mathbf{Z}_\ell > \varepsilon$ and $\widehat{\mathbf{Y}}_\ell = \breve{\mathbf{Y}}_\ell = \mathbf{Y}_\ell$; such components are not changed.

\item For components $\ell$ that are close to zero, if $\mathbf{Z}_\ell \le 0$ then $\widehat{\mathbf{Y}}_\ell = 0$.

\item If both components are small and positive, $0 < \mathbf{Z}_\ell < \varepsilon$ and $\mathbf{Y}_\ell < \varepsilon + \mathcal{O}(h)$,
then the output is a scaled value of the small $\mathbf{Y}_\ell$ component, with a multiplier smaller than one:
\[
0 \le \frac{\mathbf{Z}_\ell}{\varepsilon} < 1, \quad
\widehat{\mathbf{Y}}_\ell =  \frac{\mathbf{Z}_\ell}{\varepsilon}\,\mathbf{Y}_\ell,
\]
and
\[
\widehat{\mathbf{Y}}_\ell - \breve{\mathbf{Y}}_\ell = \left(1 - \frac{\mathbf{Z}_\ell}{\varepsilon}\right)\,\breve{\mathbf{Y}}_\ell
\le \breve{\mathbf{Y}}_\ell  =  \mathbf{Y}_\ell\,(1+\mathcal{O}(h^{q+1}))
= \mathcal{O}(\varepsilon) + \mathcal{O}(h).
\]
\end{enumerate}
\end{remark}

\subsubsection{Predictor-corrector approach}

The proposed predictor-corrector algorithm proceeds as follows.

\paragraph{Predictor step} 
Consider a SDIRK method \eqref{eqn:SDIRK} with only non-negative weights $b_i \ge 0$ and appy it to solve \eqref{eqn:graph-laplacian-system} and obtain a ``predicted'' solution $\mathbf{y}^{\{p\}}_{n+1}$:
\begin{subequations}
\label{eqn:SDIRK-predictor}
\begin{eqnarray}
\label{eqn:SDIRK-predictor-stage}
\mathbf{Y}_i & = & \mathbf{y}_n + h\,\sum_{j=1}^i\,a_{i,j}\,\mathbf{G}(\mathbf{Y}_j)\, \mathbf{Y}_j, \quad i=1,\dots,s; \\
\label{eqn:SDIRK-predictor-solution}
\mathbf{y}^{\{p\}}_{n+1} & = & \mathbf{y}_n + h\,\sum_{j=1}^s\,b_{j}\,\mathbf{G}(\mathbf{Y}_j)\, \mathbf{Y}_j.
\end{eqnarray}
\end{subequations}
The computed stage values $\mathbf{Y}_i$, as well as the new predicted solution $\mathbf{y}^{\{ p \} }_{n+1} $, may have negative entries.

\begin{remark}
If in \eqref{eqn:SDIRK-predictor} all stages $\mathbf{Y}_j \succeq 0$ have non-negative entries, the scheme is stiffly accurate, and the solution $\mathbf{y}^{\{p\}}_{n+1} \succeq \varepsilon$ has entries greater than the threshold, then the predictor solution \eqref{eqn:SDIRK-predictor-solution} reads:
\begin{equation}
\begin{split}
\mathbf{y}^{\{p\}}_{n+1} & = \mathbf{y}_n + h\,\left(\sum_{j=1}^{s-1}\,b_{j}\,\mathbf{G}(\mathbf{Y}_j)\, \boldsymbol{\Sigma}_{\mathbf{Y}_j /\mathbf{y}^{\{ p \} }_{n+1} } + b_{s}\,\mathbf{G}(\mathbf{y}^{\{p\}}_{n+1})\right)\, \mathbf{y}^{\{p\}}_{n+1} \\
  & = \left(\mathbf{I}_{d \times d} - h\,\Bar{\Bar{\mathbf{G}}}\right)^{-1}\,\mathbf{y}_n,\\
 \Bar{\Bar{\mathbf{G}}} &:= \sum_{j=1}^{s-1}\,b_{j}\,\mathbf{G}(\mathbf{Y}_j)\, \boldsymbol{\Sigma}_{\mathbf{Y}_j /\mathbf{y}^{\{ p \} }_{n+1} } + b_{s}\,\mathbf{G}(\mathbf{y}^{\{p\}}_{n+1}).
\end{split}
\end{equation}
Under these assumptions $\boldsymbol{\Sigma}_{\mathbf{Y}_j /\mathbf{y}^{\{ p \}}_{n+1} }$ are matrices of non-negative weights, and $\Bar{\Bar{\mathbf{G}}}$ is a graph Laplacian matrix per Corollary \ref{rem:scaled-linear-combinations}.
\end{remark}

The goal of the corrector is to construct a modified matrix $\overline{\mathbf{G}}$ that satisfies graph Laplacian assumptions even when some stage or solution entries are negative, and compute a corrected solution $\left(\mathbf{I}_{d \times d} - h\,\overline{\mathbf{G}}\right)^{-1}\,\mathbf{y}_n$.

\paragraph{Corrector step} 
The predictor intermediate stage values $\mathbf{Y}_j$ and the solution $\mathbf{y}^{\{ p \}}_{n+1}$ may contain negative components.
In this case the nonlinear matrix $\mathbf{G}(\mathbf{Y}_j)$ may loose its graph Laplacian properties. which are not guaranteed to remain positive. To correct this, each stage argument is replaced by its clipped version, such that each $\mathbf{G}(\breve{\mathbf{Y}}_j)$ satisfies the required sign pattern.

We compute a new, positive solution as follows:
\begin{subequations}
\label{eqn:SDIRK-corrector}
\begin{equation}
\label{eqn:SDIRK-corrector-solution}
\begin{split}
\breve{\mathbf{Y}}_j & = \textnormal{clip}(\mathbf{Y}_j) \quad \textnormal{(clipped version of stage computed by \eqref{eqn:SDIRK-predictor})} \\
\mathbf{y}_{n+1} & = \mathbf{y}_n + h\, \left(  \sum_{j=1}^s\,b_{j}\,\mathbf{G}(\breve{\mathbf{Y}}_j)\, \boldsymbol{\Sigma}_{\mathbf{Y}_j / \mathbf{y}^{\{ p \}}_{n+1} } \right) \, \mathbf{y}_{n+1} \equiv \mathbf{y}_n + h\,\overline{\mathbf{G}}\,\mathbf{y}_{n+1},
\end{split}
\end{equation}
where the ``averaged'' $\mathbf{G}$ matrix is
\begin{equation}
\label{eqn:SDIRK-corrector-Gbar}
\overline{\mathbf{G}} = \sum_{j=1}^s\,b_{j}\,\mathbf{G}(\breve{\mathbf{Y}}_j)\, \boldsymbol{\Sigma}_{\mathbf{Y}_j / \mathbf{y}^{\{ p \}}_{n+1} }.
\end{equation}
Each $\mathbf{G}(\breve{\mathbf{Y}}_j)$ is evaluated at clipped arguments with non-negative entries, such that it is a graph Laplacian matrix per equations \eqref{eqn:PDS-property-c} and \eqref{eqn:pd-G-properties}. 

Each $\mathbf{G}(\breve{\mathbf{Y}}_j)$ is scaled by a diagonal matrix with non-negative entries $\boldsymbol{\Sigma}_{\mathbf{Y}_j / \mathbf{y}^{\{ p \} }_{n+1}}$, applied from the right.

Using Remark \ref{rem:scaled-linear-combinations}, the averaged matrix $\overline{\mathbf{G}}$ \eqref{eqn:SDIRK-corrector-Gbar} is itself a graph Laplacian matrix with the same left kernel vectors \eqref{eqn:linear-invariants}.
Consequently the matrix $\overline{\mathbf{G}}$ satisfies the assumptions posed in \eqref{eqn:G-entries-sign} and \eqref{eqn:linear-invariants} that ensure positivity and conservation.

The corrector solution \eqref{eqn:SDIRK-corrector} is computed via a linear system solution:
\begin{equation}
\label{eqn:SDIRK-corrector-formula}
\mathbf{y}_{n+1} = \left( \mathbf{I}_{d \times d} - h\,\overline{\mathbf{G}} \right)^{-1}\, \mathbf{y}_{n}.
\end{equation}
The accuracy of  \(\mathbf{y}_{n+1}\) produced by the predictor-corrector approach is discussed in Section \ref{subsec:Patankar-accuracy}.
Since $\overline{\mathbf{G}}$ satisfies Assumption~\ref{ass:stability}, the matrix $\mathbf{I}_{d \times d} - h\,\overline{\mathbf{G}}$ is an $M$-matrix (positive diagonals, non-positive off-diagonals, eigenvalues in the open right half-plane). From classical $M$-matrix theory its inverse has only non-negative entries,
    \[
    (\mathbf{I}_{d \times d} - h\,\overline{\mathbf{G}})^{-1} \succeq 0,
    \]
and consequently the solution \eqref{eqn:SDIRK-corrector-formula} is non-negative:
\[
\mathbf{y}_{n} \succeq 0 \quad \Rightarrow \quad \mathbf{y}_{n+1} \succeq 0.
\]
Additionally, the inverse of the matrix \((\mathbf{I}_{d \times d} - h\,\overline{\mathbf{G}})^{-1}\) is nonsingular and therefore each row has at least one strictly positive entry; this leads to the strict positivity property 
\begin{equation}
\label{eq:strict-positivity}
\mathbf{y}_{n} \succ 0 \quad \Rightarrow \quad \mathbf{y}_{n+1} \succ 0.
\end{equation}
Moreover, solution \eqref{eqn:SDIRK-corrector-formula} preserves all linear invariants $\mathbf{w}_j$ of the system: 
\begin{equation}
\begin{split}
& \mathbf{w}_j^T\,\mathbf{G}(\cdot) \equiv 0 ~~\Rightarrow ~~\mathbf{w}_j^T\,\overline{\mathbf{G}} = 0 \\
& \Rightarrow ~~ \mathbf{w}_j^T\,\mathbf{y}_{n+1} = \mathbf{w}_j^T\,\left( \mathbf{I}_{d \times d} - h\,\overline{\mathbf{G}} \right)\,\mathbf{y}_{n+1} = \mathbf{w}_j^T\,\mathbf{y}_{n}.
\end{split}
\end{equation}
\end{subequations}

\begin{remark}[Strong sign assumption]
For systems satisfying the strong sign entries Assumption \ref{ass:strong-positivity}, the matrices  $\mathbf{G}(\mathbf{Y}_j)$ evaluated at arguments with possible small negative entries are still graph Laplacian matrices.  Arguments need not be clipped to preserve the sign property, and the averaged matrix \eqref{eqn:SDIRK-corrector-Gbar} used in the corrector  takes the simpler form:
\begin{equation}
\overline{\mathbf{G}} = \sum_{j=1}^s\,b_{j}\,\mathbf{G}(\mathbf{Y}_j)\, \boldsymbol{\Sigma}_{\mathbf{Y}_j / \mathbf{y}^{\{ p \}}_{n+1} }.
\end{equation}
\end{remark}

\begin{remark}[More general form]
For systems in the more general form \eqref{eqn:pd-D-definition}
the predictor step \eqref{eqn:SDIRK-predictor-solution} reads
\[
\mathbf{y}^{\{p\}}_{n+1} = \mathbf{y}_n + h\,\left( \sum_{j=1}^s\,b_{j}\,\mathbf{H}(\mathbf{Y}_j) \right)\,\boldsymbol{1}_{d},
\]
and the corrector step \eqref{eqn:SDIRK-corrector-solution} is
\[
\begin{split}
\mathbf{y}_{n+1} & = \mathbf{y}_n + h\, \left(  \sum_{j=1}^s\,b_{j}\,\mathbf{H}(\breve{\mathbf{Y}}_j)\, \boldsymbol{\Sigma}_{\boldsymbol{1}_{d} / \mathbf{y}^{\{ p \}}_{n+1} } \right) \, \mathbf{y}_{n+1} \\
& = \mathbf{y}_n + h\,\overline{\mathbf{H}}\,\mathbf{y}_{n+1}.
\end{split}
\]
\end{remark}

\begin{remark}[Adaptive integration and positivity corrections]
\label{rem:adaptive-clipping}
In practical computations, the underlying SDIRK methods are used together with an embedded scheme solution $\hat{\mathbf{y}}_{n+1}$ for adaptive step-size control.
In the proposed framework, the embedded error estimate $LTE$ uses the uncorrected predictor solution  produced by the base SDIRK method, i.e., $LTE \coloneqq \mathbf{y}^{\{p\}}_{n+1} - \hat{\mathbf{y}}_{n+1}$.

The positivity-preserving correction (clipping and scaling) is applied only to the solution returned by the time step, and does not enter the error estimation procedure.
Consequently, the adaptive step-size controller is unaffected by the correction mechanism, while the corrected solution $\mathbf{y}_{n+1}$ is guaranteed to be non-negative.

Since the difference between the predictor and corrected solutions is of higher order (Lemma \ref{lemma:clipping-errors}), using the predictor solution for local error control remains effective in practice.

\end{remark}

\begin{remark}[More conservative error estimation]
\label{rem:alternative-adaptive-clipping}
Alternatively, the error estimate for step size control can include the positivity correction magnitude $\|\mathbf{y}_{n+1} - \mathbf{y}^{\{p\}}_{n+1}\|$:
\[
LTE \coloneqq \|\mathbf{y}^{\{p\}}_{n+1} - \hat{\mathbf{y}}_{n+1}\| + \|\mathbf{y}_{n+1} - \mathbf{y}^{\{p\}}_{n+1}\| \ge \|\mathbf{y}_{n+1} - \hat{\mathbf{y}}_{n+1}\|.
\]
Note that this is an upper bound for the overall local truncation error, and its use ensures that the corrections remain small throughout the simulation.
\end{remark}

\subsection{Patankar stage predictor-corrector DIRK methods}
\label{subsec:DIRK-stage-PC}

We now apply the same approach to make all stage vectors non-negative.
Consider a stiffly accurate (S)DIRK method with non-negative coefficients $a_{i,j} \ge 0$ and weights $b_i \ge 0$ applied to solve \eqref{eqn:graph-laplacian-system}:
\begin{equation}
\label{eqn:DIRK}
\begin{split}
\mathbf{Y}_i & = \mathbf{y}_n + h\,\sum_{j=1}^i\,a_{i,j}\,\mathbf{G}(\mathbf{Y}_j)\, \mathbf{Y}_j, \quad i=1,\dots,s; \\
\mathbf{y}_{n+1} & = \mathbf{Y}_s = \mathbf{y}_n + h\,\sum_{j=1}^s\,a_{s,j}\,\mathbf{G}(\mathbf{Y}_j)\, \mathbf{Y}_j \\
&\equiv \mathbf{y}_n + h\,\sum_{j=1}^s\,b_{j}\,\mathbf{G}(\mathbf{Y}_j)\, \mathbf{Y}_j.
\end{split}
\end{equation}

\begin{enumerate}
\item {\it Stage predictor step.} We apply the regular DIRK stage \eqref{eqn:DIRK}: 
\[
\begin{split}
\mathbf{Y}^{\{ p \} }_i & = \mathbf{y}_n + h\,\sum_{j=1}^{i-1}\,a_{i,j}\,\mathbf{G}(\mathbf{Y}_j)\, \mathbf{Y}_j + h\,a_{i,i}\,\mathbf{G}(\mathbf{Y}^{\{ p \} }_i)\, \mathbf{Y}^{\{ p \} }_i.
\end{split}
\]
However, the computed predictor stage values $\mathbf{Y}^{\{ p \} }_i$ may have negative entries.
We note that, if all predicted entries are positive $\mathbf{Y}^{\{ p \} }_i \succeq \varepsilon \boldsymbol{1}_{d}$, then:
\[
\begin{split}
\mathbf{Y}^{\{ p \} }_i & = \mathbf{y}_n + h\,\left( \sum_{j=1}^{i-1}\,a_{i,j}\,\mathbf{G}(\mathbf{Y}_j)\, \boldsymbol{\Sigma}_{\mathbf{Y}_j / \mathbf{Y}^{\{ p \} }_i} + h\,a_{i,i}\,\mathbf{G}(\mathbf{Y}^{\{ p \} }_i)\, \right) \mathbf{Y}^{\{ p \} }_i.
\end{split}
\]
\item {\it Stage corrector step.} We compute a new, positive stage vector as follows:
\begin{equation}
\label{eqn:DIRK-corrector}
\begin{split}
\mathbf{Y}_i &= \mathbf{y}_n + h\, \left( \sum_{j=1}^{i-1}\,a_{i,j}\,\mathbf{G}(\mathbf{Y}_j)\, \boldsymbol{\Sigma}_{\mathbf{Y}_j / \breve{\mathbf{Y}}^{\{ p \} }_i} 
+ h\,a_{i,i}\,\mathbf{G}( \breve{\mathbf{Y}}^{\{ p \} }_i ) \right)\,\mathbf{Y}_i \\
&\equiv \mathbf{y}_n + h\, \overline{\mathbf{G}}_i\, \mathbf{Y}_i.
\end{split}
\end{equation}
By the same reasoning as before, $\overline{\mathbf{G}}_i$ is a graph Laplacian matrix and each corrected stage $\mathbf{Y}_i \succeq \mathbf{0}$ is a conservative and positive vector.
\item Using the stiff accuracy property, the new solution is the last corrected stage $\mathbf{y}_{n+1}=\mathbf{Y}_s$.
\end{enumerate}

\begin{remark}[Additional LU Factorization for Correction.]
\label{rem:AdditionalLU}
The post positivity correction step involves a linear solve with an additional LU factorization of the \(\bar{\mathbf{G}}\)  (\ref{eqn:SDIRK-corrector-Gbar}). This adds a non-trivial overhead to the computational cost since this is a different LU factorization than what the nonlinear solver uses. However, note that the correction avoids stiff dynamics in non-physical negative regimes that require very small integration steps. In this case the correction can decrease the computational effort despite the higher cost per-step, as seen here in the Roberston's example.

Note that MPRK schemes start with an explicit RK method and use one LU factorization per stage for the positive solution computation. Here we start with a SDIRK scheme that has it's own computation cost and we use one extra LU factorization per step for the positive correction. This correction can be viewed as an extra MPRK type stage.

\end{remark}

\paragraph{Example}

Consider the second order SDIRK2 method \cite{wanner1996solving}: 
\begin{equation}
\label{eqn:SDIRK2}
\begin{split}
\mathbf{Y}_1 & = \mathbf{y}_n + h\,\gamma\,\mathbf{G}(\mathbf{Y}_1)\, \mathbf{Y}_1 \\
\mathbf{Y}_2 & = \mathbf{y}_n + h\,(1-\gamma)\,\mathbf{G}(\mathbf{Y}_1)\, \mathbf{Y}_1 + h\,\gamma\,\mathbf{G}(\mathbf{Y}_2)\, \mathbf{Y}_2 \\
\mathbf{y}_{n+1} & = \mathbf{Y}_2,
\end{split}
\end{equation}
with $\gamma = 1 - \frac{\sqrt{2}}{2}$ and where $\mathbf{Y}_1$ and $\mathbf{Y}_2$ are the stages of SDIRK2 method. 

\begin{enumerate}

\item \textit{Predictor step.} The regular SDIRK2 stages are computed as follows \eqref{eqn:DIRK}:
\begin{equation}
\begin{split}
\mathbf{Y}_1^{\{p\}} & = \mathbf{y}_n + h\,\gamma\,\mathbf{G}(\mathbf{Y}_1^{\{p\}})\, \mathbf{Y}_1^{\{p\}} \\
\mathbf{Y}_2^{\{p\}} & = \mathbf{y}_n + h\,(1-\gamma)\,\mathbf{G}(\mathbf{Y}_1^{\{p\}})\, \mathbf{Y}_1^{\{p\}} + h\,\gamma\,\mathbf{G}(\mathbf{Y}_2^{\{p\}})\, \mathbf{Y}_2^{\{p\}},
\end{split}
\end{equation}
where the predicted stages $\mathbf{Y}_1^{\{p\}}$ and $\mathbf{Y}_2^{\{p\}}$ might contain negative components.

\item \textit{Corrector step.} To enforce positivity, we apply clipping \eqref{eqn:SDIRK-clipping} and scaling operations \eqref{eqn:DIRK-corrector}:
\begin{equation}
\begin{split}
\mathbf{Y}_1 & = \mathbf{y}_n + h\,\gamma\,\mathbf{G}(\breve{\mathbf{Y}}^{\{p\}}_1)\,\mathbf{Y}_1; \\
\mathbf{Y}_2 & = \mathbf{y}_n + h\,\left( (1-\gamma)\,\mathbf{G}(\mathbf{Y}_1)\,\boldsymbol{\Sigma}_{\mathbf{Y}_1/\breve{\mathbf{Y}}^{\{p\}}_2} + \gamma\,\mathbf{G}(\breve{\mathbf{Y}}^{\{p\}}_2) \right)\, \mathbf{Y}_2.
\end{split}
\end{equation}

Each corrected stage $\mathbf{Y}_1, \,\mathbf{Y}_2 \succeq \mathbf{0}$ is now guaranteed to have non-negative entries.

\item \textit{Final solution.} The corrected solution is obtained directly from the last stage:
\begin{equation}
\mathbf{y}_{n+1} = \mathbf{Y}_2.
\end{equation}
This ensures both positivity and conservation properties of the numerical method.

\end{enumerate}

\subsection{Accuracy considerations}
\label{subsec:Patankar-accuracy}

\begin{proposition}[Local truncation error of corrected solution]
Consider the predicted solution \eqref{eqn:SDIRK-predictor-solution} and the Patankar corrector \eqref{eqn:SDIRK-corrector}
started from the exact solution $\mathbf{y}_n = \mathbf{y}(t_n)$. Let $\varepsilon$ be the threshold factor used in the matrix scaling \eqref{def:ratio-scaling}. Then the local truncation error of the corrected solution is:
\begin{equation}
\label{eqn:patankar-local-order}
\mathbf{y}_{n+1}-\mathbf{y}(t_{n+1}) = \mathcal{O}(h^{\min(q+2,p+1)}) +  \mathcal{O}(\varepsilon).
\end{equation}
At each step one can select a threshold factor that depends on the current step size, i.e., $\varepsilon = C\,h^{\min(q+2,p+1)}$ for some constant $C$ independent of the step. With this choice the order of the Patankar corrected method is $\min(p,q+1)$.
\end{proposition}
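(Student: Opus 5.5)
We compare the corrected solution with the predictor rather than with the exact solution directly. Write $\mathbf{y}^{\{p\}}_{n+1}$ for the predictor \eqref{eqn:SDIRK-predictor-solution}, whose local error is $\mathcal{O}(h^{p+1})$. We then split
\[
\mathbf{y}_{n+1}-\mathbf{y}(t_{n+1}) = \big(\mathbf{y}_{n+1}-\mathbf{y}^{\{p\}}_{n+1}\big) + \big(\mathbf{y}^{\{p\}}_{n+1}-\mathbf{y}(t_{n+1})\big).
\]
It therefore suffices to show that $\mathbf{y}_{n+1}-\mathbf{y}^{\{p\}}_{n+1} = \mathcal{O}(h^{q+2})+\mathcal{O}(\varepsilon)$. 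Subtracting the corrector \eqref{eqn:SDIRK-corrector-solution} from the predictor gives
\[
\big(\mathbf{I}-h\overline{\mathbf{G}}\big)\big(\mathbf{y}_{n+1}-\mathbf{y}^{\{p\}}_{n+1}\big) = h\,\boldsymbol{\rho}, \qquad
\boldsymbol{\rho} \coloneqq \sum_{j=1}^s b_j\Big(\mathbf{G}(\breve{\mathbf{Y}}_j)\,\boldsymbol{\Sigma}_{\mathbf{Y}_j/\mathbf{y}^{\{p\}}_{n+1}}\,\mathbf{y}^{\{p\}}_{n+1} - \mathbf{G}(\mathbf{Y}_j)\,\mathbf{Y}_j\Big).
\]
Since $(\mathbf{I}-h\overline{\mathbf{G}})^{-1}$ is a nonnegative matrix that preserves the invariant $\boldsymbol{1}_d$, it is column-stochastic and hence bounded by 1 in the $1$-norm. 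This uniform bound would be proved as a separate small step. As a result, the difference is $\mathcal{O}(h)\,\|\boldsymbol{\rho}\|$, and the whole task reduces to showing $\boldsymbol{\rho}=\mathcal{O}(h^{q+1})+\mathcal{O}(\varepsilon)$.

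To bound $\boldsymbol{\rho}$, we decompose each summand into two pieces:
\[
\mathbf{G}(\breve{\mathbf{Y}}_j)\big(\boldsymbol{\Sigma}_{\mathbf{Y}_j/\mathbf{y}^{\{p\}}_{n+1}}\mathbf{y}^{\{p\}}_{n+1} - \mathbf{Y}_j\big) \;+\; \big(\mathbf{G}(\breve{\mathbf{Y}}_j)-\mathbf{G}(\mathbf{Y}_j)\big)\mathbf{Y}_j.
\]
For the second piece, Lipschitz continuity of $\mathbf{G}$ together with Lemma~\ref{lemma:clipping-errors} gives $\|\breve{\mathbf{Y}}_j-\mathbf{Y}_j\| = \mathcal{O}(h^{q+1})$. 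For the first piece we examine $\boldsymbol{\Sigma}_{\mathbf{Y}_j/\mathbf{y}^{\{p\}}_{n+1}}\mathbf{y}^{\{p\}}_{n+1}$ componentwise, following the three cases of Remark~\ref{rem:ratio-scaling-applied}.
\begin{itemize}
\item If $(\mathbf{y}^{\{p\}}_{n+1})_\ell\ge\varepsilon$, the component equals $(\breve{\mathbf{Y}}_j)_\ell$. It therefore differs from $(\mathbf{Y}_j)_\ell$ by $\mathcal{O}(h^{q+1})$ by Lemma~\ref{lemma:clipping-errors}.
\item If $0<(\mathbf{y}^{\{p\}}_{n+1})_\ell<\varepsilon$, the component equals $(\breve{\mathbf{Y}}_j)_\ell\,(\mathbf{y}^{\{p\}}_{n+1})_\ell/\varepsilon$. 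Both this and $(\mathbf{Y}_j)_\ell$ should be $\mathcal{O}(\varepsilon)$.
\item If $(\mathbf{y}^{\{p\}}_{n+1})_\ell\le 0$, the component is $0$. In that case $(\mathbf{Y}_j)_\ell$ must be small, because the exact solution is nonnegative.
\end{itemize}

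The main obstacle lies in the last two cases. Knowing that $\mathbf{y}^{\{p\}}_{n+1}$ is small in component $\ell$ only tells us that $(\mathbf{Y}_j)_\ell$ is $\mathcal{O}(\varepsilon)+\mathcal{O}(h)$, since stages and solution differ by $\mathcal{O}(h)$. That is too crude: the resulting error term $h\cdot\mathcal{O}(h)$ in the difference yields only $\mathcal{O}(h^2)$. My first attempt would be to exploit the multiplication by $\mathbf{G}$ in the first piece. Column $\ell$ of $\mathbf{G}$ multiplies the small component, and near a vanishing species the destruction rate $\mathbf{G}_{\ell\ell}(\mathbf{y})\mathbf{y}_\ell$ is itself small. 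The contribution is then second order in the smallness of the component. Alternatively, we could work under the working assumption of Remark~\ref{rem:ratio-scaling-applied}, namely that near-zero components are $\mathcal{O}(\varepsilon)$ throughout the step, which absorbs these terms into $\mathcal{O}(\varepsilon)$.

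Assuming this step succeeds, we obtain $\boldsymbol{\rho}=\mathcal{O}(h^{q+1})+\mathcal{O}(\varepsilon)$. The difference is then $\mathcal{O}(h^{q+2})+\mathcal{O}(h\varepsilon)\subset\mathcal{O}(h^{q+2})+\mathcal{O}(\varepsilon)$, which gives \eqref{eqn:patankar-local-order}. Finally, choosing $\varepsilon=C\,h^{\min(q+2,p+1)}$ makes the local error $\mathcal{O}(h^{\min(q+2,p+1)})$. The standard argument relating local to global error then gives the global order $\min(p,q+1)$; it applies because the corrected method is a stable one-step map, with a Lipschitz constant of $1+\mathcal{O}(h)$ in the $1$-norm by the column-stochastic bound above.
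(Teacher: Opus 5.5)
\paragraph{The gap.} Your proof has a genuine gap, and it is exactly at the step you flagged: the bound $\boldsymbol{\rho}=\mathcal{O}(h^{q+1})+\mathcal{O}(\varepsilon)$ is never established, and neither remedy you offer supplies it.
\begin{itemize}
\item Your first suggestion fails. Column $\ell$ of $\mathbf{G}(\breve{\mathbf{Y}}_j)$ multiplies the discrepancy $\big(\boldsymbol{\Sigma}_{\mathbf{Y}_j/\mathbf{y}^{\{p\}}_{n+1}}\mathbf{y}^{\{p\}}_{n+1}-\mathbf{Y}_j\big)_\ell$, not the concentration itself. The entries of that column are rates, e.g.\ the constants $\pm 0.04$ in the first column of the Robertson matrix, and they need not vanish when the $\ell$-th species does. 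So the product is only first order in $|(\mathbf{Y}_j)_\ell|$, i.e.\ $\mathcal{O}(h)$ with your bound.
\item Your alternative is an extra hypothesis that the proposition does not make.
\item Your case analysis also misreads Definition~\ref{def:ratio-scaling}. Remark~\ref{rem:ratio-scaling-applied} applies $\boldsymbol{\Sigma}$ to a \emph{clipped} vector, whereas in $\boldsymbol{\rho}$ it acts on the unclipped predictor. For $(\mathbf{y}^{\{p\}}_{n+1})_\ell\le 0$ the component is therefore $(\breve{\mathbf{Y}}_j)_\ell\,(\mathbf{y}^{\{p\}}_{n+1})_\ell/\varepsilon$, not $0$. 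Its size is $(\breve{\mathbf{Y}}_j)_\ell\cdot\mathcal{O}(h^{p+1})/\varepsilon$, which is uncontrolled when $\varepsilon\ll h^{p+1}$.
\end{itemize}
All of this signals that $\mathbf{y}^{\{p\}}_{n+1}$ is the wrong anchor.

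\paragraph{The missing idea.} This is how the paper argues: compare $\mathbf{y}_{n+1}$ with $\mathbf{z}\coloneqq\max(\mathbf{y}^{\{p\}}_{n+1},\varepsilon)$ (componentwise) instead. On $\mathbf{z}$ the ratio scaling acts exactly as clipping,
\[
\boldsymbol{\Sigma}_{\mathbf{Y}_j/\mathbf{y}^{\{p\}}_{n+1}}\,\mathbf{z}=\breve{\mathbf{Y}}_j ,
\]
so subtracting the corrector from the predictor gives
\[
(\mathbf{I}-h\overline{\mathbf{G}})(\mathbf{z}-\mathbf{y}_{n+1})=(\mathbf{z}-\mathbf{y}^{\{p\}}_{n+1})+h\sum_{j=1}^s b_j\big(\mathbf{G}(\mathbf{Y}_j)\,\mathbf{Y}_j-\mathbf{G}(\breve{\mathbf{Y}}_j)\,\breve{\mathbf{Y}}_j\big).
\]
The sum is $\mathcal{O}(h^{q+2})$ by Lemma~\ref{lemma:clipping-errors} and Lipschitz continuity of $\mathbf{v}\mapsto\mathbf{G}(\mathbf{v})\mathbf{v}$. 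The first term satisfies $0\preceq\mathbf{z}-\mathbf{y}^{\{p\}}_{n+1}\preceq\varepsilon\boldsymbol{1}_d+\mathcal{O}(h^{p+1})$, because negative predictor entries are bounded by the local error. No case analysis and no information on the size of $(\mathbf{Y}_j)_\ell$ is needed. The $\mathcal{O}(\varepsilon)$ in the statement is precisely the cost, not multiplied by $h$, of moving the anchor from $\mathbf{y}^{\{p\}}_{n+1}$ to $\mathbf{z}$.

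\paragraph{What to keep, and a remaining caution.} Your column-stochastic bound $\|(\mathbf{I}-h\overline{\mathbf{G}})^{-1}\|_1=1$ then closes the local estimate, and it is worth keeping. The paper inverts $\mathbf{I}-h\overline{\mathbf{G}}$ without comment, yet $\overline{\mathbf{G}}$ is not uniformly bounded in $h$: diagonal entries of $\boldsymbol{\Sigma}$ can reach $(\breve{\mathbf{Y}}_j)_\ell/\varepsilon$. A Neumann-series bound would therefore not do. Note that your bound needs $\boldsymbol{1}_d^T\overline{\mathbf{G}}=\mathbf{0}$, or a weighted $1$-norm if there is a positive invariant.

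That bound controls the linear solve for a frozen $\overline{\mathbf{G}}$. It does not by itself make $\mathbf{y}_n\mapsto\mathbf{y}_{n+1}$ Lipschitz with constant $1+\mathcal{O}(h)$, since $\overline{\mathbf{G}}$ depends on $\mathbf{y}_n$ through clipping and through ratios whose sensitivity scales like $1/\varepsilon$. The paper only asserts the global order $\min(p,q+1)$, so that step needs its own argument.
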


\begin{proof}
Since the small negative numerical values are of the order of local truncation errors we have that
\[
\max \left( \mathbf{y}^{\{ p \}}_{n+1},\varepsilon\right) =  \mathbf{y}^{\{ p \}}_{n+1} + \mathcal{O}(h^{p+1}) + \mathcal{O}(\varepsilon).
\]
Using the solution formula \eqref{eqn:SDIRK-predictor-solution}:
\begin{subequations}
\label{eqn:accuracy}
\begin{eqnarray}
\label{eqn:accuracy-predictor}
\max \left( \mathbf{y}^{\{ p \}}_{n+1},\varepsilon\right) & = &   \mathbf{y}_n + h\,\sum_{j=1}^s\,b_{j}\,\mathbf{G}(\mathbf{Y}_j)\, \mathbf{Y}_j + \mathcal{O}(h^{p+1}) + \mathcal{O}(\varepsilon) \\
\nonumber
& = & \mathbf{y}_n + h\,\left( \sum_{j=1}^s\,b_{j}\,\mathbf{G}(\mathbf{Y}_j)\, \boldsymbol{\Sigma}_{\mathbf{Y}_j / \mathbf{y}^{\{ p \}}_{n+1} } \right) \,\max \left( \mathbf{y}^{\{ p \}}_{n+1},\varepsilon\right)  \\
\nonumber && + \mathcal{O}(h^{p+1}) + \mathcal{O}(\varepsilon) \\
\nonumber
& = & \mathbf{y}_n + h\,\left( \sum_{j=1}^s\,b_{j}\,\mathbf{G}(\breve{\mathbf{Y}}_j)\, \boldsymbol{\Sigma}_{\mathbf{Y}_j / \mathbf{y}^{\{ p \}}_{n+1} } \right) \,\max \left( \mathbf{y}^{\{ p \}}_{n+1},\varepsilon\right) \\
\nonumber && +  \mathcal{O}(h^{\min(p+1,q+2)}) + \mathcal{O}(\varepsilon),
\end{eqnarray}
and therefore, using \eqref{eqn:SDIRK-corrector-Gbar} and \eqref{eqn:SDIRK-corrector-formula}, we have
\begin{equation}
\begin{split}
\max \left( \mathbf{y}^{\{ p \}}_{n+1},\varepsilon\right) &= \left( \mathbf{I}_{d \times d} - h\,\overline{\mathbf{G}} \right)^{-1}\, \mathbf{y}_{n} + \mathcal{O}(h^{\min(p+1,q+2)}) + \mathcal{O}(\varepsilon) \\
&= \mathbf{y}_{n+1} + \mathcal{O}(h^{\min(p+1,q+2)}) + \mathcal{O}(\varepsilon).
\end{split}
\end{equation}

\end{subequations}
Since $\mathbf{y}^{\{p\}}_{n+1}-\mathbf{y}(t_{n+1}) =  \mathcal{O}(h^{p+1})$, we have that \eqref{eqn:patankar-local-order} holds.
\end{proof}

\section{Test problems and methods}
\label{sec:test_problems_and_methods}

To assess the proposed positivity-preserving correction strategy, we employed four representative problems: the Robertson reaction, the MAPK cascade, the stratospheric reaction system, and the Korteweg-De Vries (KdV) equation. Each of these models can be expressed in the general graph Laplacian form \eqref{eqn:graph-laplacian-system}
with $\mathbf{G}(t,\mathbf{y})$ satisfying \eqref{eqn:G-entries-sign} and \eqref{eqn:linear-invariants}. It is noted that the graph Laplacian forms for a problem are not unique, as discussed in \cite{Blanes_2022_positivity}. Different Laplacian forms can capture different subsets of the nonlinear system invariants per Remark \ref{rem:invariants-all}. Here we focus on specific factorizations of the problems and do not discuss how to optimize splittings for preserving linear invariants.

\subsection{Test Problems}

This subsection provides a brief description of each test problem used in the numerical experiments. The selected models span a range of stiffness levels, nonlinearities, and structural properties, including mass conservation and positivity, and are standard benchmarks in the literature for assessing structure-preserving time integration methods. For each problem, we summarize the model equations, initial conditions, and relevant invariants.

\subsubsection{Robertson Reaction System}

The classical Robertson reaction models a stiff three-species chemical network \cite{Blanes_2022_positivity,wanner1996solving}:
\[
A \rightarrow B, \quad 
B + B \rightarrow B + C \rightarrow A + C,
\]
where $A$, $B$, and $C$ denote the three chemical species. Their concentrations are collected in
$\mathbf{y}(t) = (\mathbf{y}_1,\mathbf{y}_2,\mathbf{y}_3)^\top$, with
\begin{equation}
\label{eqn:robertson-species}
A \equiv \mathbf{y}_1, \qquad B \equiv \mathbf{y}_2, \qquad C \equiv \mathbf{y}_3.
\end{equation}
This yields the following system of ODEs for the concentrations $ \mathbf{y} \in \mathbb{R}^3$:
\begin{subequations}
\label{eqn:Robertson}
\begin{align}
\mathbf{y}_1' &= -0.04\, \mathbf{y}_1 + 10^4\, \mathbf{y}_2\, \mathbf{y}_3, \\
\mathbf{y}_2' &= 0.04\, \mathbf{y}_1 - 10^4\, \mathbf{y}_2\, \mathbf{y}_3 - 3\cdot 10^7\, \mathbf{y}_2^2, \\
\mathbf{y}_3' &= 3\cdot 10^7\, \mathbf{y}_2^2.
\end{align}
\end{subequations}
This can be written compactly in the Laplacian form
\begin{equation}
\frac{\textnormal{d}}{\textnormal{d}t} \begin{bmatrix} \mathbf{y}_1 \\ \mathbf{y}_2 \\ \mathbf{y}_3 \end{bmatrix} =
\underbrace{\begin{bmatrix}
-0.04 & 10^4\,\mathbf{y}_3 & 0 \\
0.04 & - 3\cdot 10^7\,\mathbf{y}_2 -10^4\,\mathbf{y}_3 & 0 \\
0 & 3\cdot 10^7\,\mathbf{y}_2 & 0
\end{bmatrix}}_{\mathbf{G}(t,\mathbf{y})}
\begin{bmatrix} \mathbf{y}_1 \\ \mathbf{y}_2 \\ \mathbf{y}_3 \end{bmatrix}.
\end{equation}

The initial conditions are:
\begin{equation}
\mathbf{y}(0) = [1, 0, 0]^\top.
\end{equation}

Here, $\mathbf{G}(t,\mathbf{y})$ has the graph Laplacian structure and 
satisfies a linear invariant corresponding to mass conservation \cite{Blanes_2022_positivity}.
This invariant is
\begin{equation}
\label{eqn:robertson-invariant}
\mathbf{w}^\top\,\mathbf{y} = \mathbf{y}_1 + \mathbf{y}_2 + \mathbf{y}_3 = \mathrm{constant}.
\end{equation}
This means that the total mass of all species is preserved by the continuous system, even though the individual species concentrations evolve over multiple time scales.

\subsubsection{MAPK Cascade Model}
\label{subsubsec:mapk-theory}

The mitogen-activated protein kinase (MAPK) cascade is a fundamental biochemical signaling network exhibiting multistability and oscillations. In the reduced six-dimensional form (following \cite{Blanes_2022_positivity}), the system reads:

\begin{equation}
\frac{\mathrm{d}}{\mathrm{d}t}
\begin{bmatrix}
\mathbf{y}_1 \\ \mathbf{y}_2 \\ \mathbf{y}_3 \\ \mathbf{y}_4 \\ \mathbf{y}_5 \\ \mathbf{y}_6
\end{bmatrix}
=
\underbrace{
\begin{bmatrix}
-(k_7 + k_1 \mathbf{y}_2) & 0 & 0 & k_2 & 0 & 0 \\
0 & -k_1 \mathbf{y}_1 & k_5 & 0 & 0 & 0 \\
0 & 0 & -(k_3 \mathbf{y}_1 + k_5) & k_2 & k_4 & 0 \\
(1-\alpha)k_1 \mathbf{y}_2 & \alpha k_1 \mathbf{y}_1 & 0 & -k_2 & 0 & 0 \\
0 & 0 & k_3 \mathbf{y}_1 & 0 & -k_4 & 0 \\
k_7 & 0 & 0 & 0 & 0 & -k_6
\end{bmatrix}
}_{\mathbf{G}(t,\mathbf{y})}
\begin{bmatrix}
\mathbf{y}_1 \\ \mathbf{y}_2 \\ \mathbf{y}_3 \\ \mathbf{y}_4 \\ \mathbf{y}_5 \\ \mathbf{y}_6
\end{bmatrix}.
\label{eqn:mapk-system}
\end{equation}

Here, the parameter $\alpha \in [0,1]$ controls how the coupling between $\mathbf{y}_1$ and $\mathbf{y}_2$ is distributed between different reaction pathways. While $\mathbf{G}(t,\mathbf{y})$ does not always have column sums equal to zero, it retains the sign pattern of a Laplacian, thus preserving positivity of the exact solution \cite{Blanes_2022_positivity}.

The typical rate parameter values are used here: 
\[
 k_1 = \frac{100}{3}, \,k_2 = \frac{1}{3}, \,k_3 = 50, \,k_4 = \frac{1}{2}, \,k_5 = \frac{10}{3}, \,k_6 = \frac{1}{10}, \,k_7 = \frac{7}{10}.
\] 
The initial condition for this model is given by:
\[
\mathbf{y}(0) = [ 0.1, \,0.175, \,0.15, \,1.15, \,0.81, \,0.5 ]^\top.
\]
For $\alpha \in (0,1)$ this MAPK cascade possesses two independent linear conservation laws
given by the following left kernel vectors of $\mathbf{G}(t,\mathbf{y})$:
\[
\mathbf{w}_1^\top = [ 1, 0, 0, 1, 0, 1 ], 
\quad 
\mathbf{w}_2^\top = [ 0, 1, 1, 1, 1, 0 ].
\]
The corresponding conserved quantities are
\begin{equation}
C_1(t) = \mathbf{w}_1^\top \mathbf{y}(t), \qquad
C_2(t) = \mathbf{w}_2^\top \mathbf{y}(t).
\label{eqn:mapk-invariant}
\end{equation}

Special cases behave differently and can be summarized as follows:  
\begin{itemize}
    \item For $\alpha = 0$, $\mathbf{w}_1$ is an invariant, but $\mathbf{w}_2$ is not.
    \item For $\alpha = 1$, $\mathbf{w}_2$ is an invariant, but $\mathbf{w}_1$ is not.
\end{itemize}
Therefore, it is impossible for \(\alpha \in \{0,1\}\) to simultaneously enforce both invariants with methods that rely solely on matrix exponentials. This reflects a typical limitation in the geometric integration of nonlinear systems with multiple invariants \cite{Blanes_2022_positivity}. We chose $\alpha = 1$ to ensure exact preservation of the conservation law corresponding to $\mathbf{w}_2$, which is typically more relevant for the signaling interpretation of the cascade.

\subsubsection{Stratospheric Reaction System}
\label{subsubsec:stratospheric-theory}

The stratospheric reaction model from \cite{Sandu_2001_positiveChemistry,Sandu_2002_positivity-Favoring,Sandu_2011_chemSolverOverview} represents photochemical interactions in atmospheric chemistry. 
The stratospheric reaction problem involves six chemical tracers:
\[
\mathbf{y} = \big[ [O^{1D}], \,[O], \,[O_3], \,[O_2], \,[NO], \,[NO_2] \big]^\top = [\mathbf{y}_1, \dots, \mathbf{y}_6]^\top,
\]
where each component represents the concentration of a specific species.
The time evolution of the concentrations is governed by the following stiff system of ODEs \cite{Sandu_2001_positiveChemistry,Sandu_2002_positivity-Favoring}:
\begin{equation}
\label{eqn:strato-ode}
\begin{aligned}
\mathbf{y}_1' &= k_5 \mathbf{y}_3 - k_6 \mathbf{y}_1 - k_7 \mathbf{y}_1 \mathbf{y}_3, \\
\mathbf{y}_2' &= 2k_1 \mathbf{y}_4 - k_2 \mathbf{y}_2 \mathbf{y}_4 + k_3 \mathbf{y}_3 - k_4 \mathbf{y}_2 \mathbf{y}_3 + k_6 \mathbf{y}_1 - k_9 \mathbf{y}_2 \mathbf{y}_6 + k_{10} \mathbf{y}_6, \\
\mathbf{y}_3' &= k_2 \mathbf{y}_2 \mathbf{y}_4 - k_3 \mathbf{y}_3 - k_4 \mathbf{y}_2 \mathbf{y}_3 - k_7 \mathbf{y}_1 \mathbf{y}_3 - k_8 \mathbf{y}_3 \mathbf{y}_5, \\
\mathbf{y}_4' &= -k_1 \mathbf{y}_4 - k_2 \mathbf{y}_2 \mathbf{y}_4 + k_3 \mathbf{y}_3 + 2k_4 \mathbf{y}_2 \mathbf{y}_3 + k_5 \mathbf{y}_3 + 2k_7 \mathbf{y}_1 \mathbf{y}_3 + k_8 \mathbf{y}_3 \mathbf{y}_5 + k_9 \mathbf{y}_2 \mathbf{y}_6, \\
\mathbf{y}_5' &= -k_8 \mathbf{y}_3 \mathbf{y}_5 + k_9 \mathbf{y}_2 \mathbf{y}_6 + k_{10} \mathbf{y}_6, \\
\mathbf{y}_6' &= k_8 \mathbf{y}_3 \mathbf{y}_5 - k_9 \mathbf{y}_2 \mathbf{y}_6 - k_{10} \mathbf{y}_6.
\end{aligned}
\end{equation}
The rate coefficients are given by:
\[
\begin{aligned}
k_1 &= 2.643 \cdot 10^{-10} \, \sigma^3(t), & \quad k_2 &= 8.018 \cdot 10^{-17}, & \quad k_3 &= 6.120 \cdot 10^{-4} \, \sigma(t), \\
k_4 &= 1.576 \cdot 10^{-15}, & \quad k_5 &= 1.070 \cdot 10^{-3} \, \sigma^2(t), & \quad k_6 &= 7.110 \cdot 10^{-11}, \\
k_7 &= 1.200 \cdot 10^{-10}, & \quad k_8 &= 6.062 \cdot 10^{-15}, & \quad k_9 &= 1.069 \cdot 10^{-11}, \\
k_{10} &= 1.289 \cdot 10^{-2} \, \sigma(t).
\end{aligned}
\]
Here, $\sigma(t)$ is a periodic function modeling the diurnal cycle:
\[
\sigma(t) =
\begin{cases}
\displaystyle \frac{1}{2} + \frac{1}{2}\cos\!\left( \pi \left| \frac{2T_L - T_R - T_S}{T_S - T_R}\right| \frac{2T_L - T_R - T_S}{T_S - T_R} \right), & \text{if } T_R \leq T_L \leq T_S, \\[6pt]
0, & \text{otherwise,}
\end{cases}
\]
where:
\[
T_L = \left( \frac{t}{3600} \right) \bmod 24, \quad
T_R = 4.5, \quad
T_S = 19.5.
\]
The time is measured in seconds. The initial time is taken at noon, $t_0 = 12 \cdot 3600$, and the integration is typically performed over a three-day period $t_f = t_0 + 72 \cdot 3600$.

The initial concentrations (in molecules per cubic centimeter) are:
\[
y_0 = \big[ 9.906 \cdot 10, ~ 6.624 \cdot 10^8, ~ 5.326 \cdot 10^{11}, ~ 1.697 \cdot 10^{16}, ~ 8.725 \cdot 10^8, ~ 2.240 \cdot 10^8 \big]^\top.
\]
This is a non-autonomous system that can be written in the general graph Laplacian form \eqref{eqn:graph-laplacian-system}
with $\mathbf{G}(t, \mathbf{y})$ an explicitly time-dependent graph Laplacian matrix.

A possible choice for the graph Laplacian $\mathbf{G}$ is:
\[
\resizebox{\linewidth}{!}{$
\begin{bmatrix}
-(k_6 + k_7 \mathbf{y}_3) & 0 & k_5 & 0 & 0 & 0 \\
k_6 & -(k_2 \mathbf{y}_4 + k_4 \mathbf{y}_3 + k_9 \mathbf{y}_6) & k_3 & 2k_1 & 0 & k_{10} \\
0 & \frac{1}{3}k_2 \mathbf{y}_4 & -\gamma & \frac{2}{3}k_2 \mathbf{y}_2 & 0 & 0 \\
\frac{1}{2}k_7 \mathbf{y}_3 & k_4 \mathbf{y}_3 + \frac{1}{2}k_9 y_6 & \gamma + \frac{1}{2}k_7 \mathbf{y}_1 & -(k_1 + k_2 \mathbf{y}_2) & 0 & \frac{1}{2}k_9 \mathbf{y}_2 \\
0 & 0 & 0 & 0 & -k_8 \mathbf{y}_3 & k_{10} + k_9 \mathbf{y}_2 \\
0 & 0 & 0 & 0 & k_8 \mathbf{y}_3 & -(k_{10} + k_9 \mathbf{y}_2)
\end{bmatrix}
$}
\]
with $\gamma = k_3 + k_5 + k_4 \mathbf{y}_2 + k_7 \mathbf{y}_1 + k_8 \mathbf{y}_5$.

This problem has two linear invariants corresponding to the conservation of the total number of oxygen and nitrogen atoms, respectively:
\[
\mathbf{w}_1 = [1,\,1,\,3,\,2,\,1,\,2]^\top, \quad \mathbf{w}_2 = [0,\,0,\,0,\,0,\,1,\,1]^\top.
\]

The corresponding conserved quantities are
\begin{equation}
M_O(t) = \mathbf{w}_1^\top \mathbf{y}(t), \qquad
M_N(t) = \mathbf{w}_2^\top \mathbf{y}(t).
\label{eqn:strato-invariant}
\end{equation}

However, due to the structure of $\mathbf{G}$, it is impossible to find a single $\mathbf{G}$ that simultaneously preserves both invariants exactly:
\[
\mathbf{w}_1^\top\,\mathbf{G}(t, \mathbf{y}) \neq 0, \quad \mathbf{w}_2^\top\,\mathbf{G}(t, \mathbf{y}) = 0.
\]
Thus, one invariant can be preserved exactly (here, total nitrogen), while the other (total oxygen) is only preserved to high accuracy numerically \cite{Blanes_2022_positivity}.

In summary, all three problems, Robertson reaction, MAPK cascade, and stratospheric chemistry fit the $\mathbf{G}(t,\mathbf{y})\mathbf{y}$ framework of production-destruction systems, enabling the application of positivity-preserving time integrators.

\subsubsection{Korteweg-De Vries (KdV) equation}

The Korteweg-De Vries (KdV) equation is a nonlinear dispersive partial differential equation originally derived as a model for the propagation of weakly nonlinear waves \cite{Jager_2011_originKdV}. In its generalized form, it reads:

\begin{equation}
\frac{\partial u}{\partial t} + \alpha u \frac{\partial u}{\partial x} + \rho \frac{\partial u}{\partial x} + \nu \frac{\partial^3 u}{\partial x^3} = 0,
\label{eqn:kdv-pde}
\end{equation}

where $\alpha$ is the nonlinear advection coefficient, $\rho$ is a linear transport (convection) coefficient, $\nu$ is the dispersion parameter.

For numerical experiments, we discretize the KdV equation using a finite volume formulation on a periodic domain and recast it in the general production-destruction (PD) form \eqref{eqn:pd-D-definition}:
\[
\mathbf{y}' = \mathbf{H}(\mathbf{y})\, \mathbf{1}, \quad \mathbf{H}(\mathbf{y}) = \mathbf{D}^\top - \operatorname{diag}(\mathbf{D}\,\mathbf{1}).
\]
Here $\mathbf{y} \in \mathbb{R}^d$ is the cell-averaged discrete solution (shifted to ensure positivity), and $\mathbf{D}(\mathbf{y}) \in \mathbb{R}^{d \times d}$ is a non-negative matrix of destruction rates. 
The discrete flux function is defined as:
\[
\mathbf{f} = -\left( \alpha\, \mathbf{y}^2 + \rho\, \mathbf{y} + \nu\, L_x \mathbf{y} \right),
\]
where $L_x$ is the discrete second derivative operator.

The matrix $\mathbf{D}$ is constructed from the numerical fluxes at cell interfaces.
For each interface flux $f_{i+1/2}$ we define the corresponding production and
destruction rates in a conservative, pairwise manner:
\[
\begin{cases}
p_{i,i+1} = d_{i+1,i} = \tfrac{1}{\Delta x}\, \mathbf{f} _{i+1/2}, & \mathbf{f} _{i+1/2} \ge 0, \\[1ex]
p_{i+1,i} = d_{i,i+1} = -\tfrac{1}{\Delta x}\, \mathbf{f} _{i+1/2}, & \mathbf{f} _{i+1/2} < 0,
\end{cases}
\]
so that each positive flux contributes to outflow from the upwind cell and inflow
to its neighbor. Collecting these rates, we obtain
\[
\mathbf{D} = \operatorname{diag}(\mathbf{y})\,\mathbf{L}, \qquad
\mathbf{L}_{i,j} = 
\begin{cases}
\frac{(\mathbf{S}_p\, \mathbf{f})_i}{\mathbf{y}_i} & \text{when $j$ is the upwind neighbor of $i$}, \\
0 & \text{otherwise},
\end{cases}
\]
with periodic indexing on the domain. Here $\mathbf{S}_p$ denotes the finite-volume operator
for the first derivative in conservative flux form.

The initial condition is chosen as a solitary wave:
\[
\mathbf{y}(0) = 6\, \mathrm{sech}^2(x),
\]
and the system is evolved in time according to the structure-preserving framework \eqref{eqn:pd-D-definition}, making this a nonlinear, non-diagonal example of the general class of production-destruction systems.

The semi-discrete KdV equation preserves the discrete mass
\begin{equation}
M(t) = \Delta x \sum_{j=1}^N u_j(t).
\label{eqn:kdv-invariant}
\end{equation}

Equivalently, $M(t) = \mathbf{w}^\top \mathbf{y}(t)$ with
$\mathbf{w} = \Delta x (1,\dots,1)^\top$.

This test problem is used to assess the ability of numerical methods to handle nonlinear wave dynamics, while respecting the structural constraints required for positivity-preserving integration. Similar positivity-preserving production-destruction formulations for PDEs have been studied in \cite{Huang_2019_positivity}, where unconditional positivity and conservation were established in the context of non-equilibrium flows.

\subsection{Base SDIRK methods}

To integrate stiff production-destruction systems, we use a family of singly diagonally implicit Runge-Kutta (SDIRK) methods with embedded error estimators.

An $s$-stage SDIRK method for the autonomous system
\[
\mathbf{y}' = f(\mathbf{y}), \quad \mathbf{y}(t_n) = \mathbf{y}_n,
\]
takes the form
\begin{align}
\mathbf{Y}_i &= \mathbf{y}_n + h \sum_{j=1}^{i} a_{ij}\, f(\mathbf{Y}_j), \quad i = 1,\dots,s, \\
\mathbf{y}_{n+1} &= \mathbf{y}_n + h \sum_{j=1}^s b_j\, f(\mathbf{Y}_j),
\end{align}
where the Butcher matrix $A = (a_{ij})$ is lower triangular with identical diagonal entries $a_{ii} = \gamma$ \cite{Hairer_1993_solvingODE}.

We use three classical SDIRK methods of increasing order, each with an embedded scheme for adaptive step-size control \cite{Hairer_1993_solvingODE}.
In this work we carry out experiments with fixed step sizes. In general, the positivity holds even when adaptive step sizes are used. The embedded scheme only provides truncation error estimates and does not need to be positive.

\subsubsection{SDIRK21 scheme}

The SDIRK21 scheme \cite{Kennedy_2016} is a two-stage, second-order, L-stable method with an embedded first-order solution for local error estimation.
The specific numerical values for coefficients used in this work are:
\[
\begin{array}{c|cc}
1 - \frac{1}{\sqrt{2}} & 1 - \frac{1}{\sqrt{2}} & 0 \\
1 & \frac{1}{\sqrt{2}} & 1 - \frac{1}{\sqrt{2}} \\
\hline
 & \frac{1}{\sqrt{2}} & 1 - \frac{1}{\sqrt{2}} \\
\hline
 & \frac{2}{3} & \frac{1}{3}
\end{array}
\]
This method is second-order accurate, L-stable, making it robust for stiff problems, and has an embedded Euler-like first-order scheme for adaptive step-size control \cite{Kennedy_2016}.

\subsubsection{SDIRK32 scheme}

The SDIRK32 scheme  \cite{Kennedy_2016} is a four-stage, third-order, L-stable method with an embedded second-order solution.
The coefficients used in this work are:
\[
\renewcommand{\arraystretch}{1.5}
\begin{array}{c|cccc}
\frac{9}{40} & \frac{9}{40} & 0 & 0 & 0 \\
\frac{7}{13} & \frac{163}{520} & \frac{9}{40} & 0 & 0 \\
\frac{11}{15} & -\frac{6481433}{8838675} & \frac{87795409}{70709400} & \frac{9}{40} & 0 \\
1 & \frac{4032}{9943} & \frac{6929}{15485} & -\frac{723}{9272} & \frac{9}{40} \\
\hline
 & \frac{4032}{9943} & \frac{6929}{15485} & -\frac{723}{9272} & \frac{9}{40} \\
\hline
 & \frac{20}{51} & -\frac{410011317313}{22571052756900} & -\frac{2075562131}{281561189810} & \frac{29595333}{2429345900}
\end{array}
\]

This method is third-order accurate, L-stable, suitable for moderately stiff problems, and has an embedded second-order method for error estimation \cite{Kennedy_2016}.

\subsubsection{SDIRK43 scheme}

The SDIRK43 scheme  \cite{Kennedy_2016} is a five-stage, fourth-order method with an embedded third-order solution.
The Butcher tableau is:
\[
\renewcommand{\arraystretch}{1.5}
\begin{array}{c|ccccc}
\frac{1}{4} & \frac{1}{4} & 0 & 0 & 0 & 0 \\
\frac{9}{10} & \frac{13}{20} & \frac{1}{4} & 0 & 0 & 0 \\
\frac{2}{3} & \frac{580}{1287} & -\frac{175}{5148} & \frac{1}{4} & 0 & 0 \\
\frac{3}{5} & \frac{12698}{37375} & -\frac{201}{2990} & \frac{891}{11500} & \frac{1}{4} & 0 \\
1 & \frac{944}{1365} & -\frac{400}{819} & \frac{99}{35} & -\frac{575}{252} & \frac{1}{4} \\
\hline
 & \frac{944}{1365} & -\frac{400}{819} & \frac{99}{35} & -\frac{575}{252} & \frac{1}{4} \\
\hline
 & \frac{41911}{60060} & -\frac{83975}{144144} & \frac{3393}{1120} & -\frac{27025}{11088} & \frac{103}{352} \\
\end{array}
\]

This method is fourth-order accurate, L-stable but the embedded lower-order method may be A-stable, and provides an embedded third-order estimate for adaptive control \cite{Kennedy_2016}.

\subsubsection{Summary and usage}

These methods provide a hierarchy of accuracy and stiffness robustness. SDIRK21 is the simplest, fully L-stable, and ideal for highly stiff problems. SDIRK32 offers a balance between accuracy and stability. SDIRK43 gives high accuracy while retaining good A-stability properties.

Before applying the positivity-preserving predictor-corrector correction (Section \ref{subsec:DIRK-solution-PC}), we first verify that the baseline SDIRK methods achieve their theoretical order on standard test problems. The second step is to apply the correction from Sec. \ref{subsec:DIRK-solution-PC}, i.e., correct only $\mathbf{y}_{n+1} = \mathbf{Y}_2$, and evaluate the resulting positivity and order of accuracy. Finally, we apply the correction from Sec. \ref{subsec:DIRK-stage-PC}, i.e., correct both $\mathbf{Y}_1$ and $\mathbf{y}_{n+1} = \mathbf{Y}_2$, and again assess positivity preservation and convergence order.

This framework generalizes to SSP SDIRK methods and higher-order schemes \cite{Ketcheson_2009_SSP_RK}, ensuring positivity and conservation without degrading in practice the formal order of accuracy.

\section{Numerical results}
\label{sec:results}

In this section we evaluate the proposed positivity-preserving predictor-corrector schemes for four main aspects:
\begin{enumerate}
\item Positivity enforcement (see Section \ref{subsec:positivity}).
\item Invariant preservation. Verifying that intrinsic conservation laws are respected by the numerical scheme, even after corrections are applied (see Section \ref{subsec:invariant}).
\item Order. Assessing the empirical order of convergence and possible degradation of the theoretical order of SDIRK methods (see Section \ref{subsec:order}).
\item Efficiency. Quantifying the computational overhead (or potential benefits) of applying corrections (see Section \ref{subsec:efficiency}).
\end{enumerate}

\subsection{Positivity preservation}
\label{subsec:positivity}
This subsection examines the ability of the proposed correction strategy to prevent the appearance of negative solution components in practice. For each test problem, we compare the baseline SDIRK integrator with its positivity-corrected variants and illustrate whether non-negativity is maintained throughout the simulation. The examples highlight situations in which uncorrected methods violate positivity and demonstrate how the proposed corrections restore physically meaningful solutions.

\subsubsection{MAPK Cascade}
The MAPK cascade did not exhibit any negative concentrations when simulated using the uncorrected SDIRK methods. This behavior suggests that the intrinsic dynamics of the MAPK system while moderately stiff do not drive the solution outside the positive region, at least under the integration settings used in our tests.

Therefore, positivity-preserving corrections had no visible effect on the MAPK results in terms of enforcing non-negativity. However, corrections were still applied in subsequent sections to test their impact on invariants, convergence order, and computational performance.

\subsubsection{Robertson Reaction}

The classical stiff Robertson reaction problem is known to exhibit significant numerical challenges due to rapid changes. Standard integrators can produce negative concentrations, particularly in the intermediate species $B$ (see \eqref{eqn:robertson-species}). 

\begin{figure}[htbp]
\centering
\includegraphics[scale=0.4]{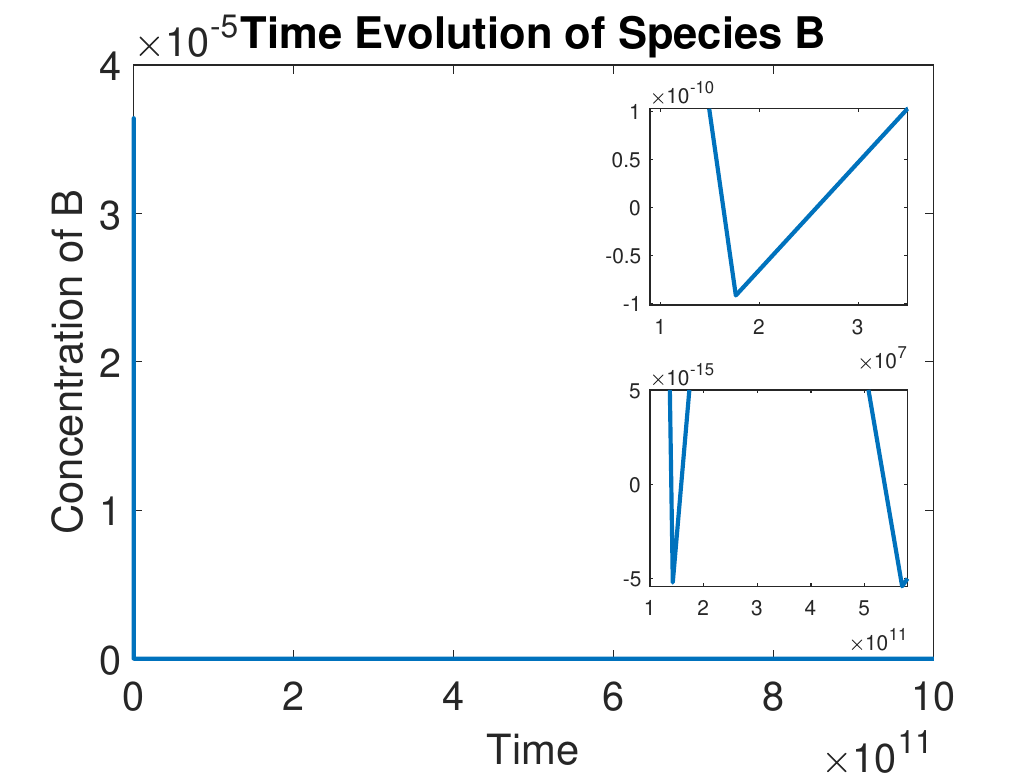}
\caption{Robertson reaction system \eqref{eqn:Robertson}. Time evolution of numerical $B$ computed by the base version of SDIRK21. Negative values are observed, indicating a violation of physical constraints.}
\label{fig:robertson-uncorrected}
\end{figure}

Figure \ref{fig:robertson-uncorrected} shows that when the SDIRK21 scheme is applied without correction, the concentration of $B$ becomes negative during the integration, which is physically unrealistic. 
After applying our proposed positivity correction strategies either at the final stage or at all stages we obtained fully non-negative trajectories as shown in Figures \ref{fig:robertson-last_stage-corrected} and \ref{fig:robertson-all_stages-corrected}, respectively.

\begin{figure}[htbp]
\centering
\includegraphics[scale=0.4]{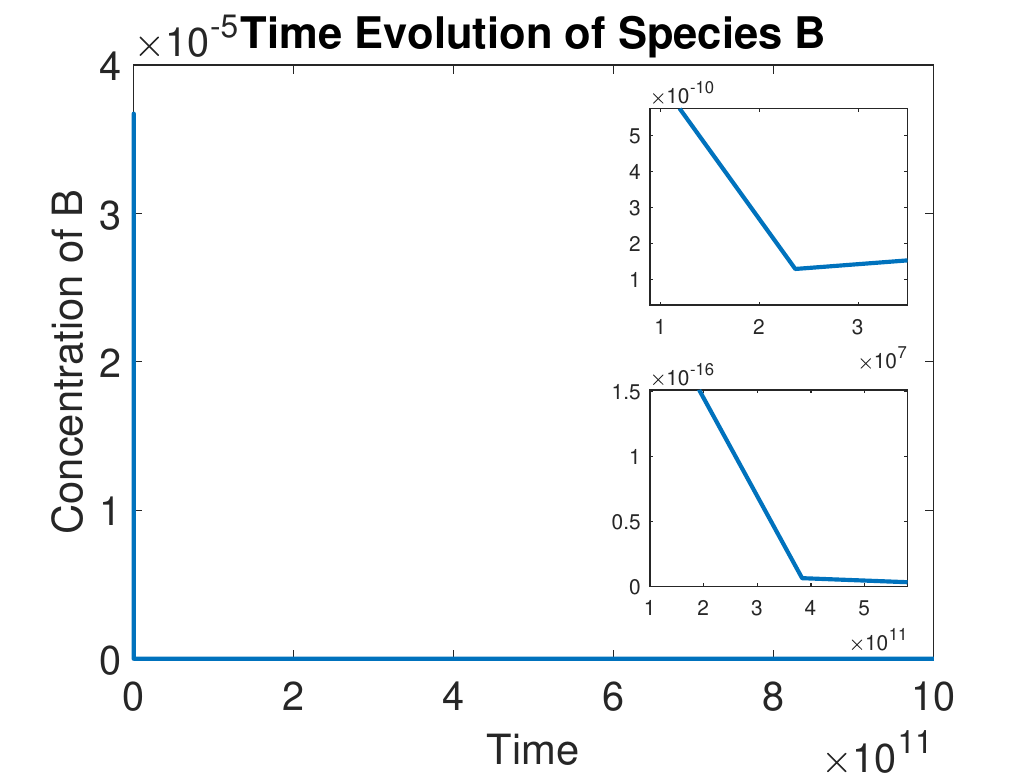}
\caption{Robertson reaction system \eqref{eqn:Robertson}. Time evolution of $B$ using SDIRK21 with positivity correction applied to $\mathbf{y}_{n+1}$. All values remain strictly non-negative.}
\label{fig:robertson-last_stage-corrected}
\end{figure}

\begin{figure}[htbp]
\centering
\includegraphics[scale=0.4]{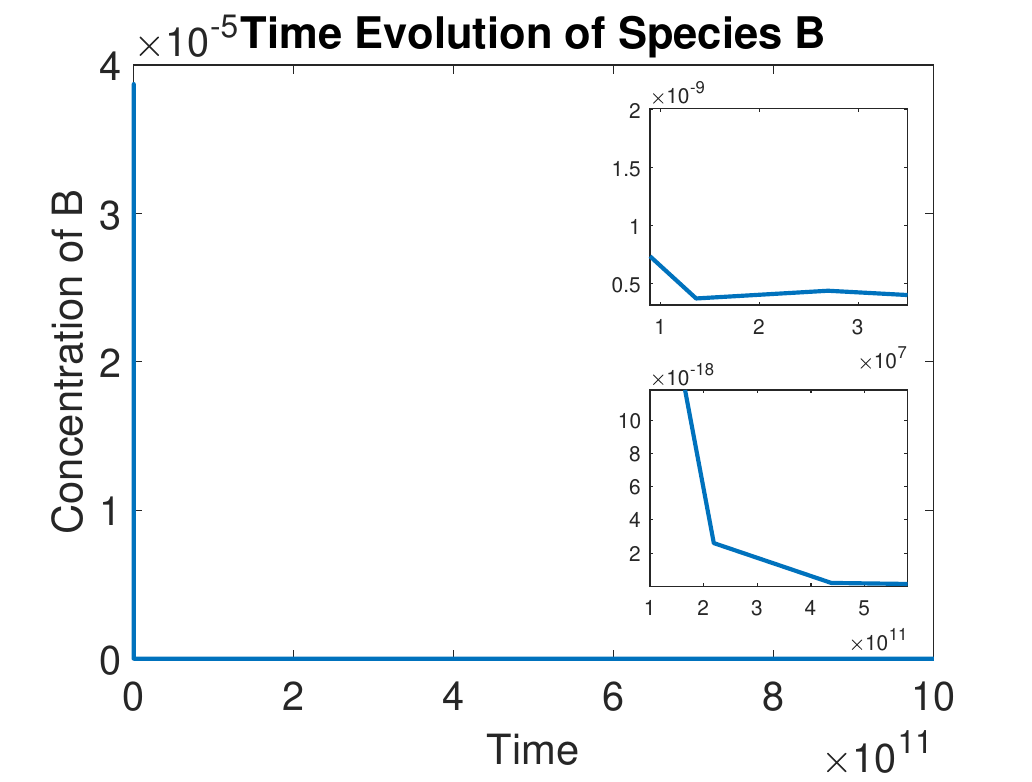}
\caption{Robertson reaction system \eqref{eqn:Robertson}. Time evolution of $B$ using SDIRK21 with positivity correction applied to each stage. The solution remains strictly non-negative.}
\label{fig:robertson-all_stages-corrected}
\end{figure}

These results demonstrate that positivity preserving schemes are to be preferred for preserving the physical integrity of stiff reaction models like Robertson's. Importantly, the corrected methods do not introduce instability or noticeable error in the solution profile over the long simulation interval.

\subsubsection{Stratospheric Reaction}

To verify the effectiveness of the proposed correction scheme, we conducted a one-day simulation of the stratospheric reaction system using the SDIRK21 method, focusing on the concentrations of species O and O\textsuperscript{1D}. These two concentrations are particularly tend to numerical negativity in standard time integration.

\begin{figure}[htbp]
\centering
\begin{subfigure}[h]{0.45\textwidth}
\centering
\includegraphics[scale=0.4]{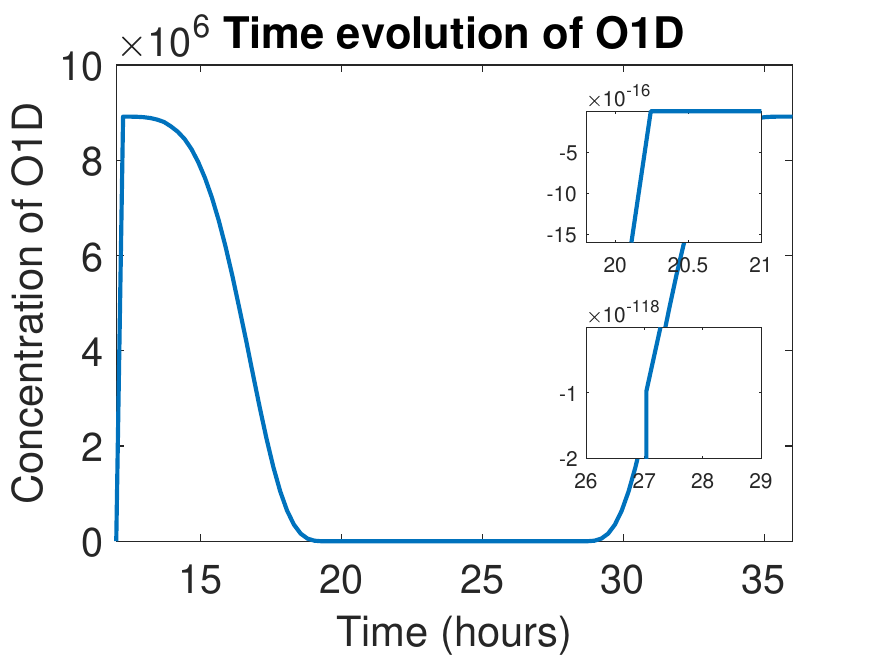}
\caption{Concentration of O\textsuperscript{1D}}
\label{sfig:sdirk-O1D}
\end{subfigure}
\begin{subfigure}[h]{0.45\textwidth}
\centering
\includegraphics[scale=0.4]{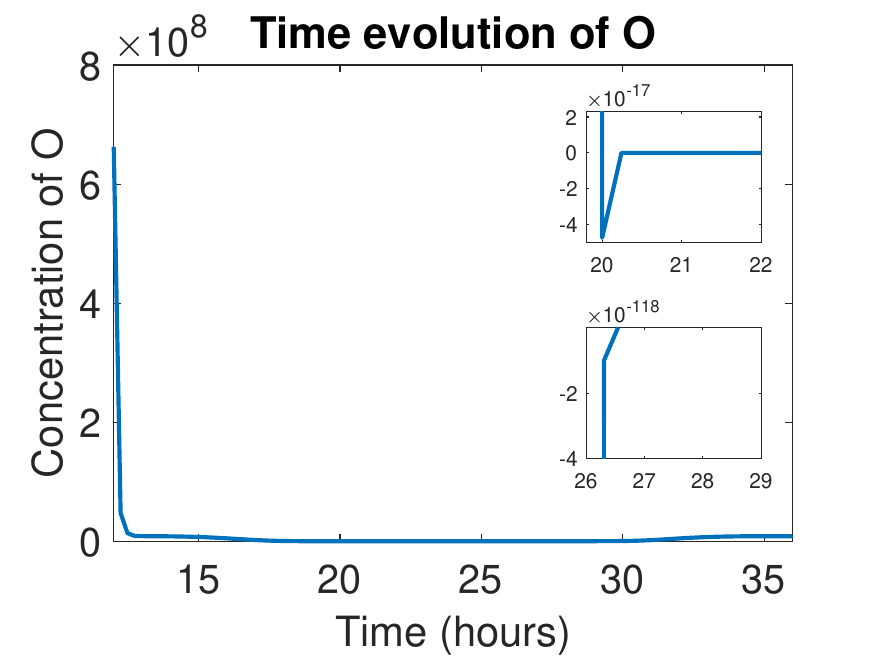}
\caption{Concentration of O}
\label{sfig:sdirk-O}
\end{subfigure}
\caption{Stratospheric Reaction \eqref{eqn:strato-ode}. Concentrations of O\textsuperscript{1D} and O over time using base version of SDIRK21. Negative values are observed, indicating a violation of physical constraints.}
\label{fig:sdirk-uncorrected}
\end{figure}

Figure \ref{fig:sdirk-uncorrected} illustrates the results of integrating the stratospheric reaction system using the SDIRK21 method without applying positivity corrections. In both cases concentrations become negative during the integration interval, which is physically meaningless and violates the positivity of chemical concentrations.

After applying the proposed positivity-preserving correction mechanism, the new results in Figure \ref{fig:sdirk-last_stage-corrected} and \ref{fig:sdirk-all_stages-corrected} show fully non-negative solutions throughout the simulation interval.

\begin{figure}[htbp]
\centering
\begin{subfigure}[h]{0.45\textwidth}
\centering
\includegraphics[scale=0.4]{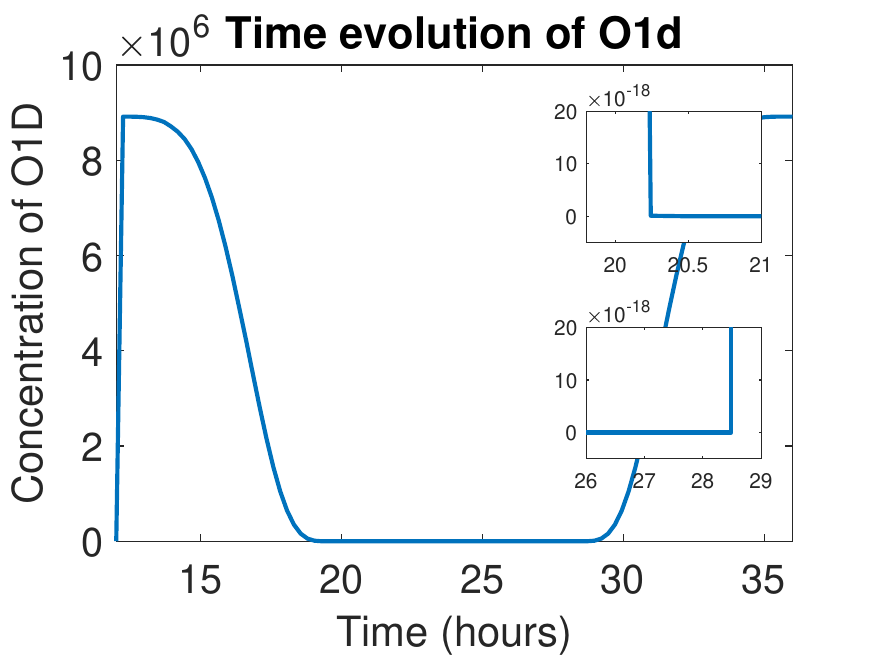}
\caption{Concentration of O\textsuperscript{1D}}
\label{sfig:sdirk-O1D-corr_last-stage}
\end{subfigure}
\begin{subfigure}[h]{0.45\textwidth}
\centering
\includegraphics[scale=0.4]{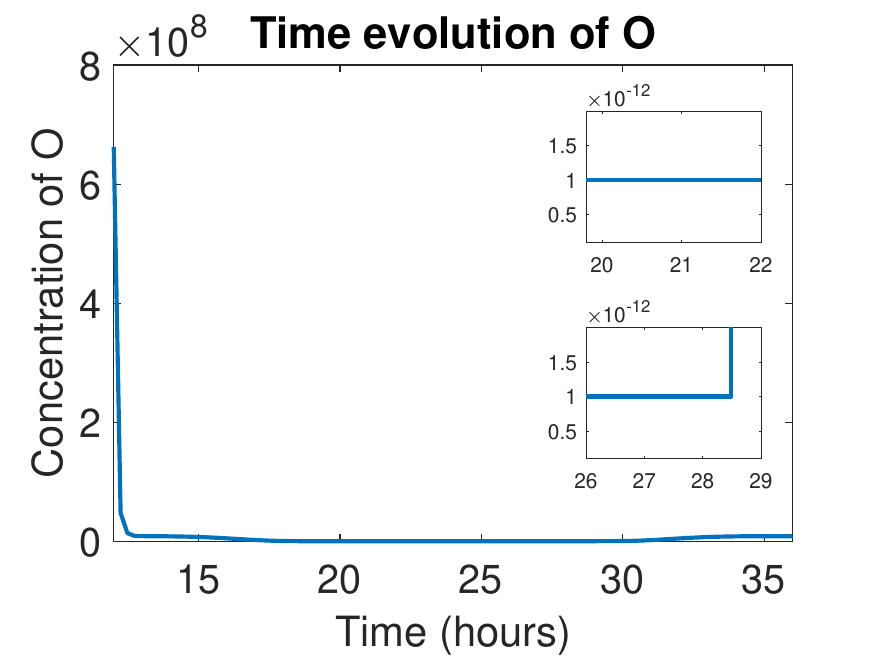}
\caption{Concentration of O}
\label{sfig:sdirk-O-corr_last-stage}
\end{subfigure}
\caption{Stratospheric Reaction \eqref{eqn:strato-ode}. Concentrations of O\textsuperscript{1D} and O over time using SDIRK21 with positivity correction applied to $\mathbf{y}_{n+1}$. All values remain strictly non-negative.}
\label{fig:sdirk-last_stage-corrected}
\end{figure}

\begin{figure}[htbp]
\centering
\begin{subfigure}[h]{0.45\textwidth}
\centering
\includegraphics[scale=0.4]{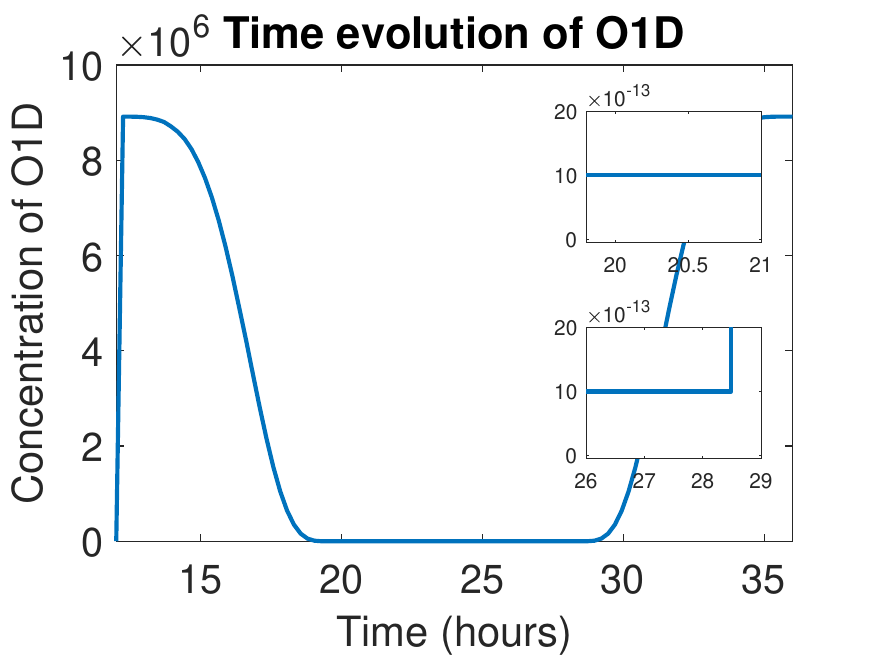}
\caption{Concentration of O\textsuperscript{1D}}
\label{sfig:sdirk-O1D-corr_all-stages}
\end{subfigure}
\begin{subfigure}[h]{0.45\textwidth}
\centering
\includegraphics[scale=0.4]{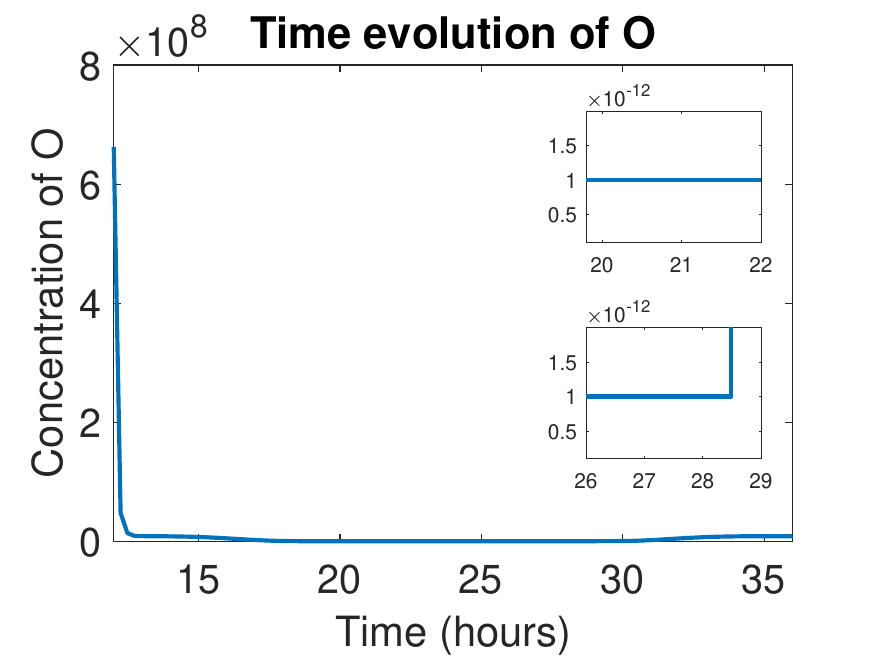}
\caption{Concentration of O}
\label{sfig:sdirk-O-corr_all-stages}
\end{subfigure}
\caption{Stratospheric Reaction \eqref{eqn:strato-ode}. Concentrations of O\textsuperscript{1D} and O over time using SDIRK21 with positivity correction applied to each stage. All values remain strictly non-negative.}
\label{fig:sdirk-all_stages-corrected}
\end{figure}

This experiment highlights the effectiveness of the correction strategy. By enforcing positivity at each stage of the integration, the corrected method maintains the physical realism of the solution.

A natural question is whether positivity could be enforced simply by rejecting steps that produce negative components and retrying with smaller step sizes. We tested this approach by halving the step size $h$ whenever a negative value appeared, while keeping all other solver settings unchanged. While this strategy initially prevents negativity, in practice, the integrator quickly drives the step size below useful limits. In stiff regimes, the solver spends many consecutive attempts with extremely small steps without making appreciable progress in time, eventually hitting the ``step size too small'' termination condition. This experiment shows that simply decreasing the step size is not a viable solution and underscores the necessity of dedicated positivity-preserving corrections.

\begin{figure}[htbp]
    \centering
    \begin{subfigure}[b]{0.45\textwidth}
        \centering
        \includegraphics[width=\textwidth]{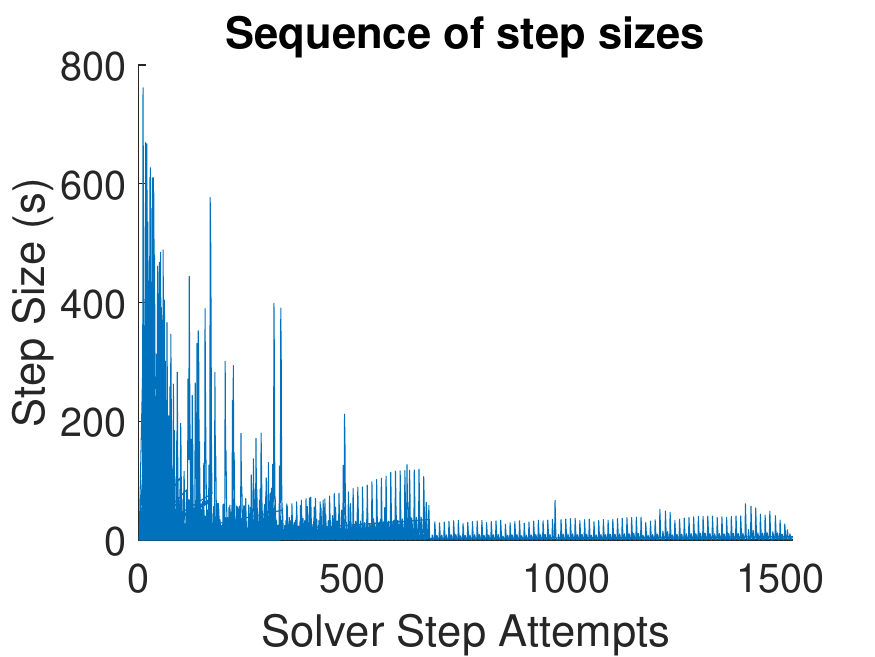}
        \caption{}
        \label{sfig:step-sizes-sequence-with-changes}
    \end{subfigure}
    \begin{subfigure}[b]{0.45\textwidth}
        \centering
        \includegraphics[width=\textwidth]{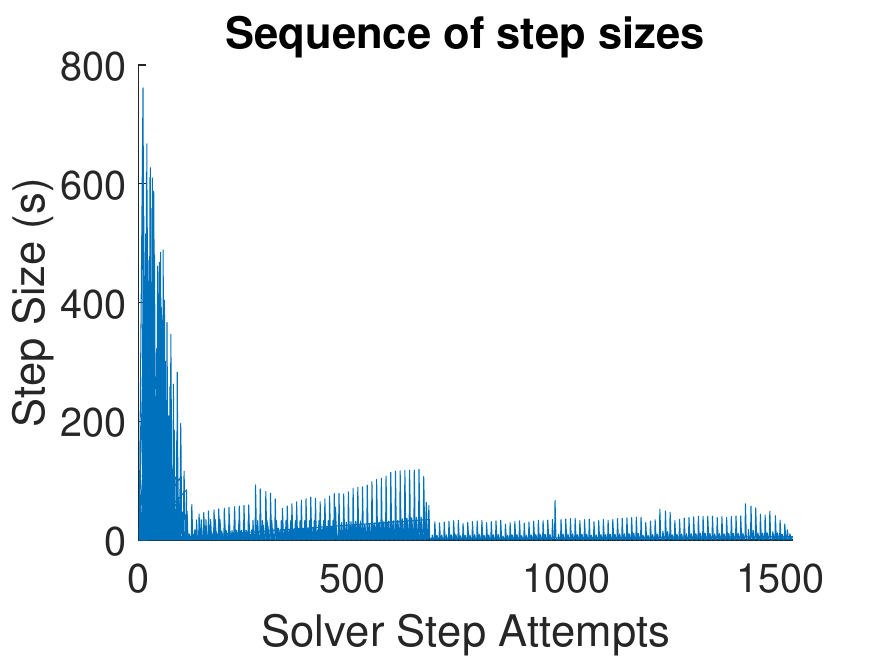}
        \caption{}
        \label{sfig:step-sizes-sequence-without-changes}
    \end{subfigure}
       \caption{Sequence of attempted step sizes $h$ (in seconds) plotted against solver step attempts for the stratospheric reaction model \eqref{eqn:strato-ode} using the base SDIRK method. Figure \ref{sfig:step-sizes-sequence-with-changes} positivity enforced by rejecting steps that produce negative components and halving the step size. Figure \ref{sfig:step-sizes-sequence-without-changes} same configuration, but without positivity-based step rejection. When step rejection is used to enforce positivity, the step size rapidly collapses and the solver becomes trapped in repeated retries; without the positivity guard, the step size remains governed by error control and the solver advances normally.}
       \label{fig:step-sizes-sequence}
\end{figure}

Figure \ref{fig:step-sizes-sequence} shows the sequence of solver step attempts when positivity is enforced by rejecting negative solutions and halving the step size, and compares it with the same solver run without positivity-based step rejection. Each point corresponds to one attempted step, whether accepted or rejected. In the guarded case, after a few hundred attempts, the step size $h$ (measured in seconds) falls by several orders of magnitude and remains clustered near the numerical floor, indicating repeated rejections with negligible progress in physical time. By contrast, when negativity does not trigger step rejection, the step size remains controlled by the error estimator and the solver advances normally. Such behavior illustrates why enforcing positivity by step rejections alone is impractical, and motivates the need for dedicated positivity-preserving corrections that maintain progress without choking the step size.

\subsubsection{KdV Equation}

To further evaluate the performance of the proposed positivity-preserving corrections, we applied the SDIRK21 method to the Korteweg-De Vries (KdV) equation. This nonlinear dispersive equation models the evolution of one-dimensional wave profiles and is widely used as a benchmark for testing numerical integrators. In particular, the KdV system can generate steep gradients and localized soliton structures, making it a suitable test for assessing whether the positivity of the solution is maintained during the simulation.

Unlike the Robertson, MAPK, and stratospheric problems, for the KdV equation we applied the correction only to the final stage $\mathbf{y}_{n+1}$. When the correction was enforced at all intermediate stages $\mathbf{Y}_i$, the numerical solution exhibited severely distorted dynamics, so those results are omitted.

This behavior is consistent with the correction mechanism described in Section \ref{sec:theoretical_background}. The stage correction modifies the intermediate stage values through clipping and scaling, with $\breve{\mathbf{Y}}_i - \mathbf{Y}_i = \mathcal{O}(h^{q+1})$ for SDIRK methods (Lemma \ref{lemma:clipping-errors}). For the semi-discrete KdV equation, these stage values enter the nonlinear spatial operator. As a result, repeated modification of the internal stages can perturb the intermediate wave evolution and lead to noticeable distortion of the solution profile. Applying the correction only to the final stage $\mathbf{y}_{n+1}$ avoids altering the internal stage dynamics while still enforcing non-negativity of the returned solution.

Thus, the KdV results below compare only the baseline SDIRK integrator and the final-stage correction.

\begin{figure}[htbp]
\centering
\includegraphics[scale=0.5]{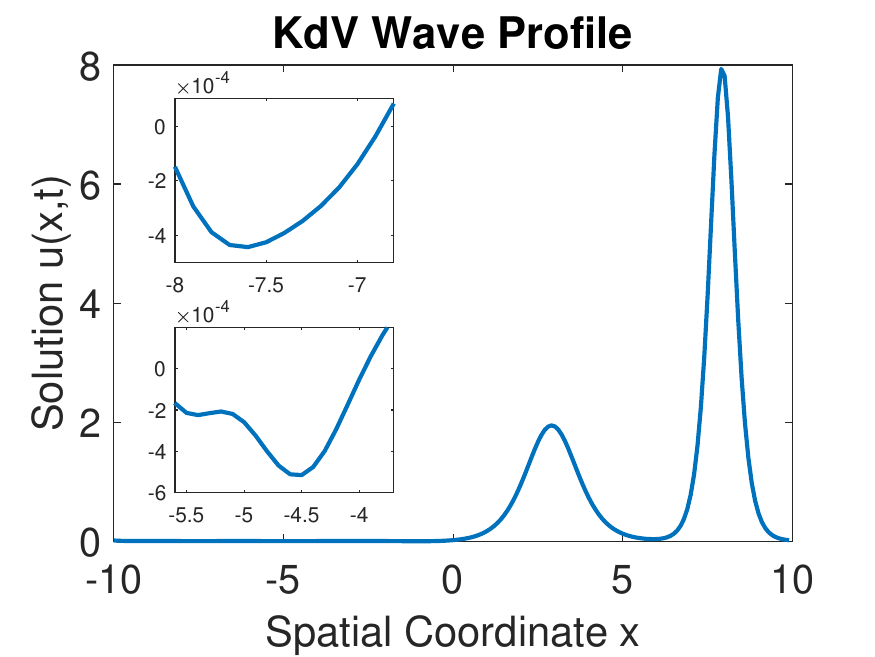}
\caption{KdV equation \eqref{eqn:kdv-pde}. KdV wave profile at the final time $t$ = 0.35 base version of SDIRK21. Negative values are observed, indicating a violation of physical constraints.}
\label{fig:kdv-uncorrected}
\end{figure}

Figure \ref{fig:kdv-uncorrected} illustrates the result of integrating the KdV equation with the uncorrected SDIRK21 method. As shown in the zoomed-in regions, the numerical solution exhibits small but clearly visible negative undershoots. These oscillations are unphysical, as the KdV wave amplitude should remain non-negative.

\begin{figure}[htbp]
\centering
\includegraphics[scale=0.5]{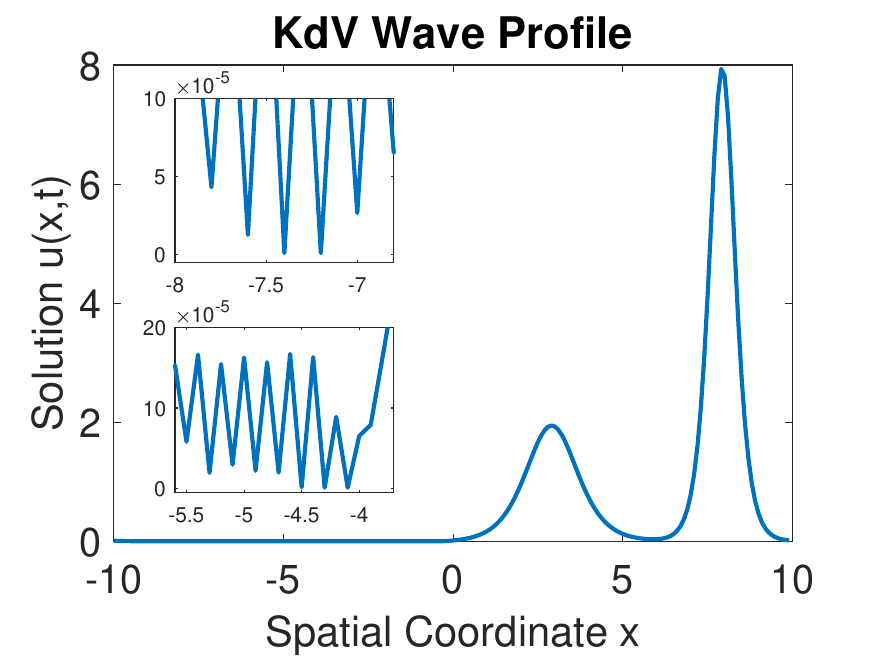}
\caption{KdV equation \eqref{eqn:kdv-pde}. KdV wave profile at the final time $t$ = 0.35 using SDIRK21 with positivity correction applied to $\mathbf{y}_{n+1}$. All values remain strictly non-negative.}
\label{fig:kdv_corr_last-stage}
\end{figure}

After applying the proposed positivity-preserving correction at the final stage, the solution remains strictly non-negative, as shown in Figure \ref{fig:kdv_corr_last-stage}. Importantly, the correction eliminates spurious negative components without introducing visible distortion to the soliton structure.

These results confirm that the correction strategy is not only effective for stiff reaction kinetics but also for discrete nonlinear PDEs such as KdV, where maintaining positivity is essential for preserving the physical integrity of the solution \cite{Huang_2019_positivity,Huang_2019_third-orders,Huang_2023_on-the-stability}.

\subsection{Invariant preservation}
\label{subsec:invariant}

To evaluate the ability of the proposed SDIRK schemes with positivity-preserving Patankar corrections to maintain the invariants of production-destruction systems, we evaluated four benchmark problems. These are the MAPK cascade, the stratospheric reaction system, the Robertson reaction, and the KdV equation. For each problem, we report the relative error in invariant preservation for three cases:
\begin{enumerate}
    \item The baseline SDIRK integrator without correction.
    \item The same integrator with correction applied only to the final stage $\mathbf{y}_{n + 1}$.
    \item The integrator with correction applied to all internal stages $\mathbf{Y}_i$.
\end{enumerate}

To quantify invariant preservation, we measure the maximum relative deviation of a conserved quantity over the integration interval. Let $C(t)$ denote a linear invariant of the continuous or semi-discrete system, and $C_0 := C(0)$ its initial value. We define the relative invariant error as

\begin{equation}
\label{eqn:invariant-error}
E_I := \max_{t \in [0,T]} \frac{\lvert C(t) - C_0 \rvert}{\lvert C_0 \rvert}.
\end{equation}

\subsubsection{Robertson Reaction}

The Robertson reaction preserves the total concentration defined in \eqref{eqn:robertson-invariant}.

Over the time interval $[0,10^4]$, the maximum relative mass deviation
$E_I$ defined in \eqref{eqn:invariant-error} was:
\begin{itemize}
    \item Baseline SDIRK: $2.44 \times 10^{-15}$.
    \item Final-stage correction: $2.22 \times 10^{-15}$.
    \item All-stage correction: $3.99 \times 10^{-15}$.
\end{itemize}

\begin{figure}[htbp]
\centering
\includegraphics[scale=0.4]{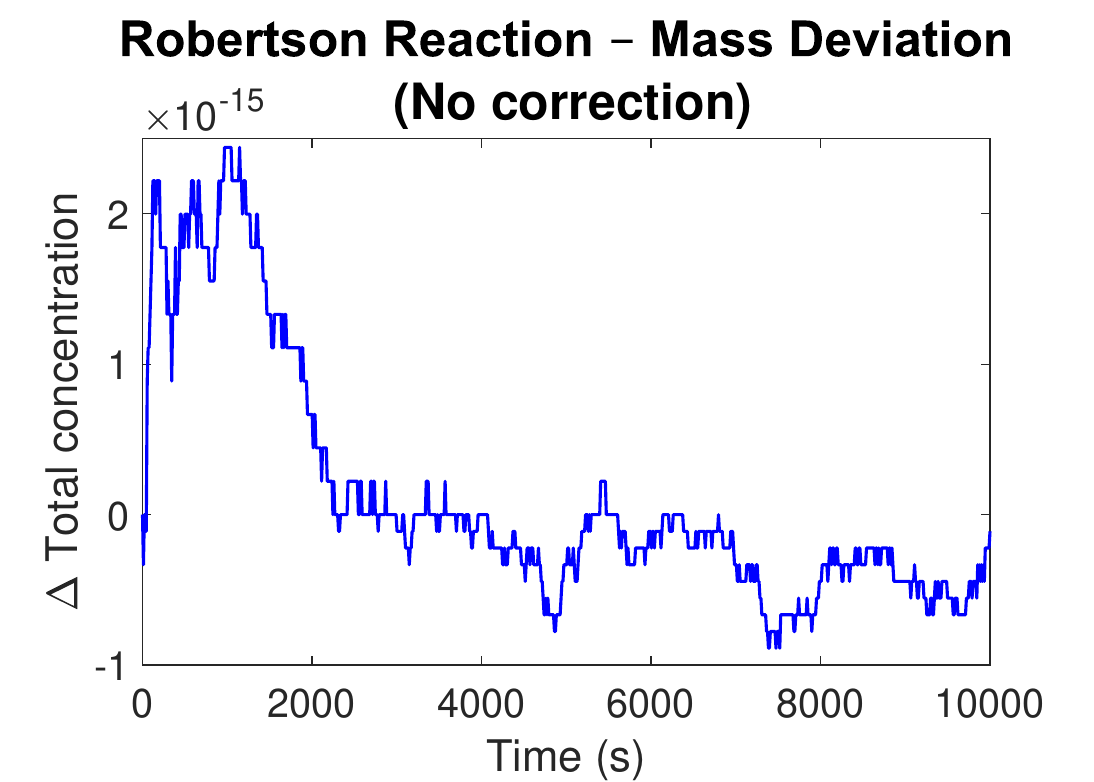}
\caption{Robertson reaction system \eqref{eqn:Robertson}. Deviation of the conserved total concentration for the baseline SDIRK scheme. The plotted quantities show deviations from the initial value, reported in concentration units.}
\label{fig:robertson-mass}
\end{figure}

\begin{figure}[htbp]
\centering
\includegraphics[scale=0.4]{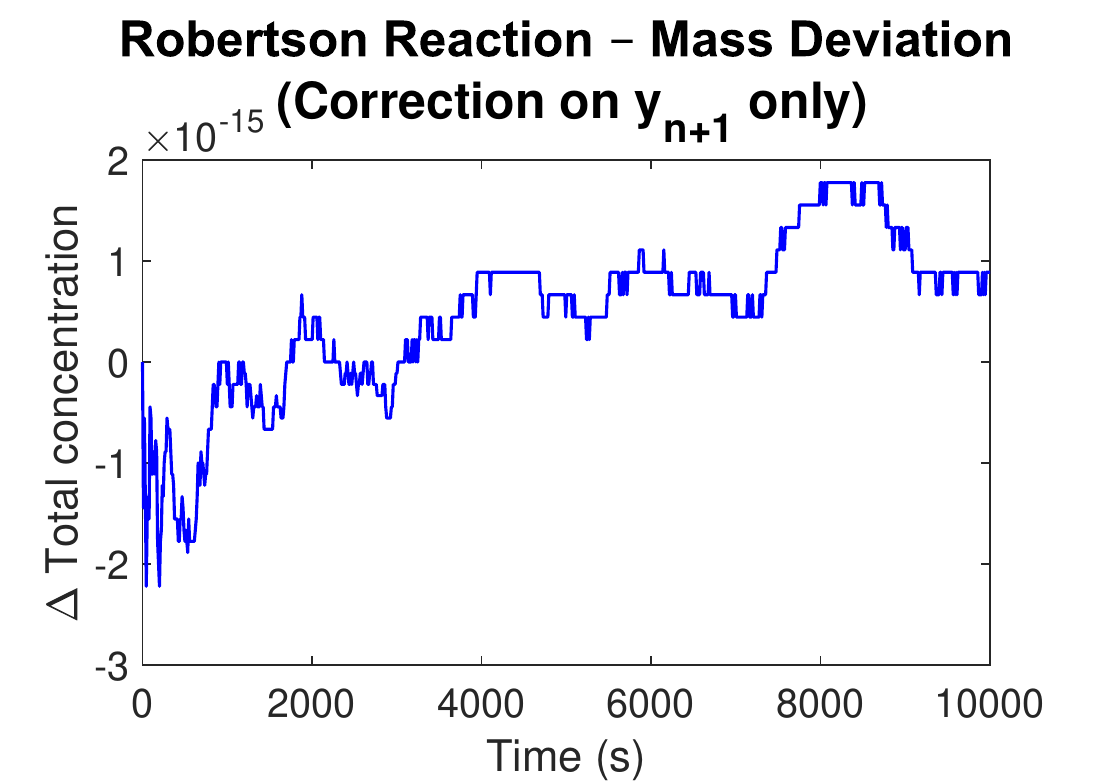}
\caption{Robertson reaction system \eqref{eqn:Robertson}. Deviation of the conserved total concentration with final-stage correction. The plotted quantities show deviations from the initial value, reported in concentration units.}
\label{fig:robertson-mass-last-stage}
\end{figure}

\begin{figure}[htbp]
\centering
\includegraphics[scale=0.4]{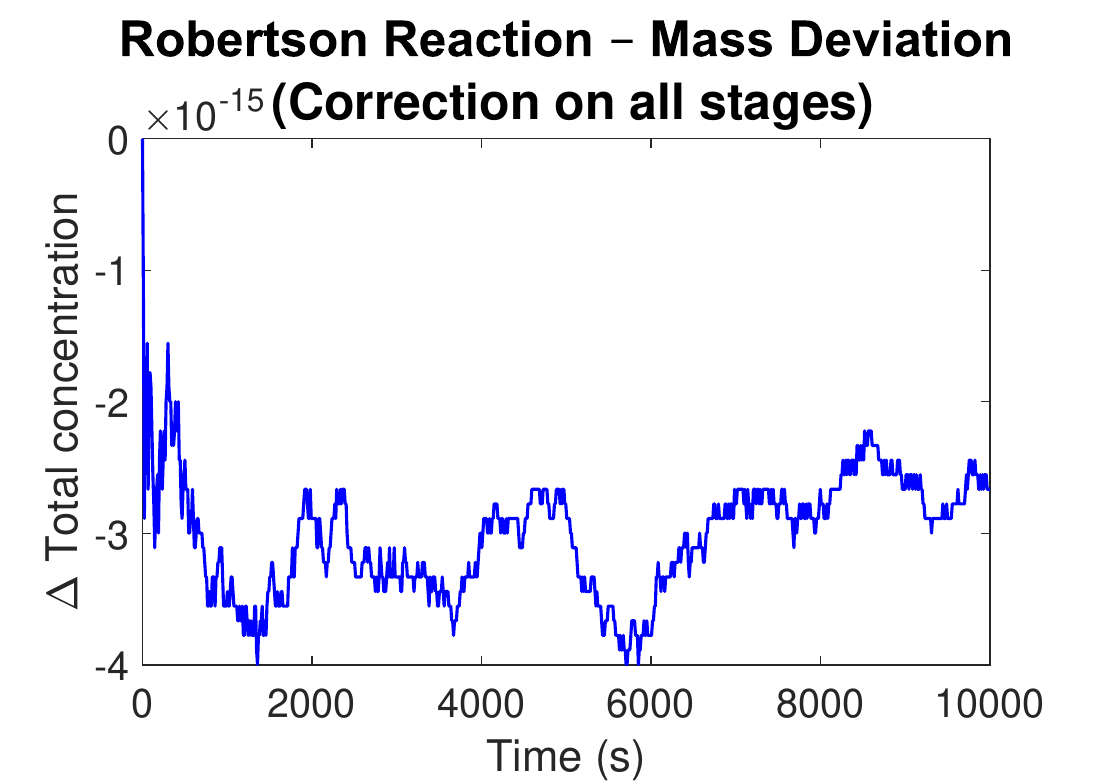}
\caption{Robertson reaction system \eqref{eqn:Robertson}. Deviation of the conserved total concentration with all-stage correction. The plotted quantities show deviations from the initial value, reported in concentration units.}
\label{fig:robertson-mass-all-stages}
\end{figure}

\subsubsection{MAPK Cascade}

As discussed in Subsection \ref{subsubsec:mapk-theory}, the MAPK cascade has two linear conservation laws, defined in $C_1$ and $C_2$ \eqref{eqn:mapk-invariant}.

Numerical tests confirm that only one conservation law is preserved depending on the value of $\alpha$. For $\alpha = 0$, only $C_1$ is preserved. For $\alpha = 1$, only $C_2$ is preserved.

These findings are consistent with the structure of $\mathbf{G}(y)$ and with the prior analysis by \cite{Blanes_2022_positivity}. They showed that, for endpoint values of $\alpha$, one of the conservation laws is lost due to a non-zero left kernel mismatch.

All tests were conducted over the time interval \([0, 200]\). The relative errors in each invariant under different correction strategies are summarized below.

Tables \ref{tab:mapk-alpha-0} and \ref{tab:mapk-alpha-1} summarize the relative invariant
deviations for the MAPK cascade. For $\alpha=0$, only $C_1$ is preserved to machine
precision, while $C_2$ exhibits noticeable drift. Conversely, for $\alpha=1$, $C_2$
is preserved exactly and $C_1$ is not. These results are consistent with the structure
of $\mathbf{G}(t,\mathbf{y})$ and the theoretical analysis in
\cite{Blanes_2022_positivity}.

\begin{table}[htbp]
\centering
\caption{MAPK cascade invariant preservation for $\alpha = 0$ over $[0,200]$.
Reported values are maximum relative deviations $E_I$ defined in
\eqref{eqn:invariant-error}.}
\label{tab:mapk-alpha-0}
\begin{tabular}{lcc}
\toprule
Method & $E_{C_1}$ & $E_{C_2}$ \\
\midrule
Baseline SDIRK        & $9.14 \times 10^{-15}$ & $5.40 \times 10^{-4}$ \\
Final-stage correction & $9.26 \times 10^{-15}$ & $5.39 \times 10^{-4}$ \\
All-stage correction   & $9.26 \times 10^{-15}$ & $5.73 \times 10^{-4}$ \\
\bottomrule
\end{tabular}
\end{table}

\begin{table}[htbp]
\centering
\caption{MAPK cascade invariant preservation for $\alpha = 1$ over $[0,200]$.
Reported values are maximum relative deviations $E_I$ defined in
\eqref{eqn:invariant-error}.}
\label{tab:mapk-alpha-1}
\begin{tabular}{lcc}
\toprule
Method & $E_{C_1}$ & $E_{C_2}$ \\
\midrule
Baseline SDIRK        & $9.48 \times 10^{-4}$ & $3.11 \times 10^{-15}$ \\
Final-stage correction & $9.47 \times 10^{-4}$ & $3.11 \times 10^{-15}$ \\
All-stage correction   & $9.21 \times 10^{-4}$ & $3.11 \times 10^{-15}$ \\
\bottomrule
\end{tabular}
\end{table}

\begin{figure}[htbp]
\centering
\includegraphics[scale=0.36]{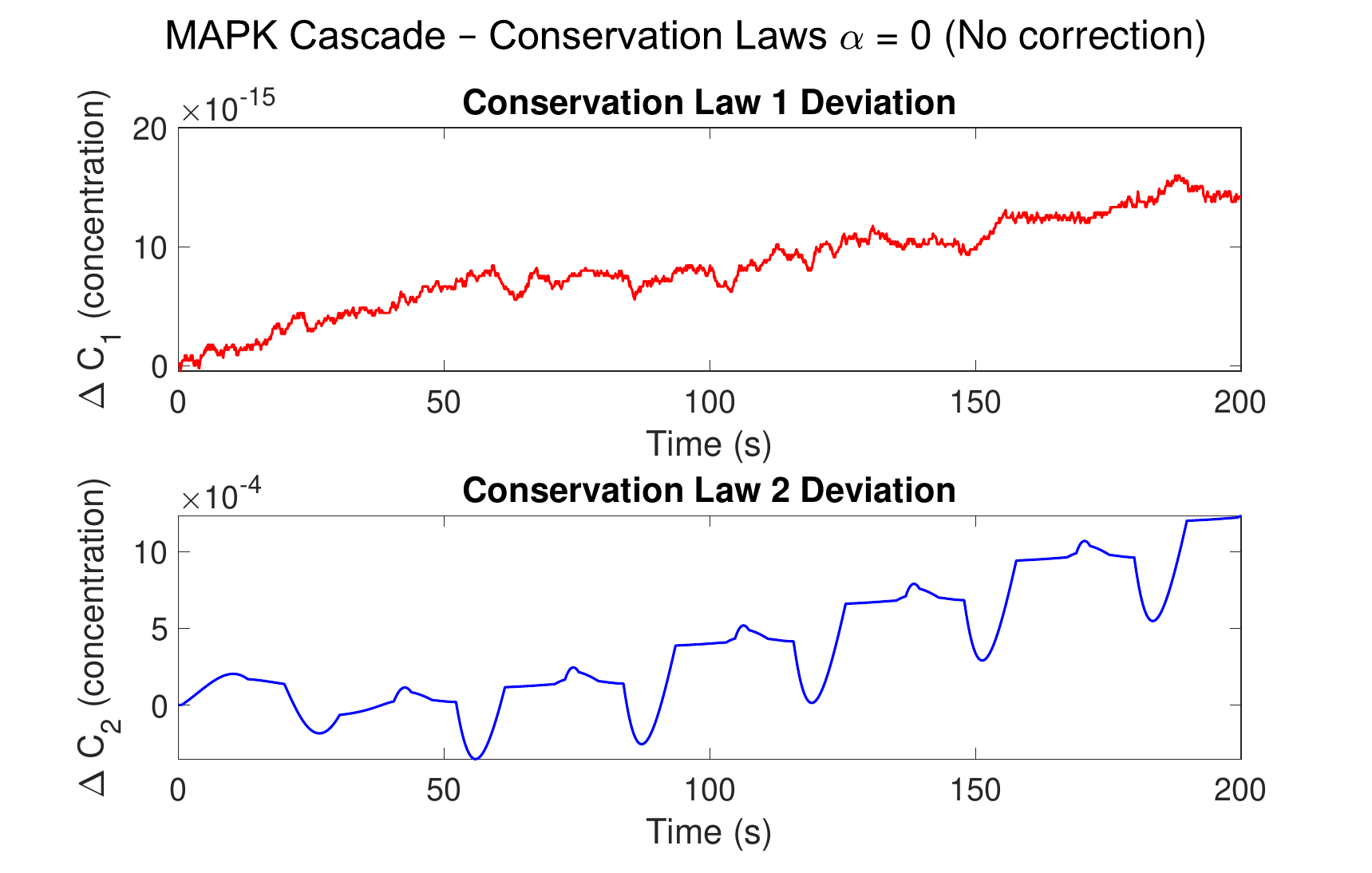}
\caption{MAPK Cascade \eqref{eqn:mapk-system}. Deviations from the conserved quantities for the baseline SDIRK scheme. Here, $C_1(t)=\mathbf{w}_1^\top \mathbf{y}(t)=y_1+y_4+y_6$ and $C_2(t)=\mathbf{w}_2^\top \mathbf{y}(t)=y_2+y_3+y_4+y_5$ denote the linear conservation laws of the MAPK cascade. The plotted quantities are deviations from the initial values, reported in concentration units.}
\label{fig:mapk-mass-0}
\end{figure}

\begin{figure}[htbp]
\centering
\includegraphics[scale=0.36]{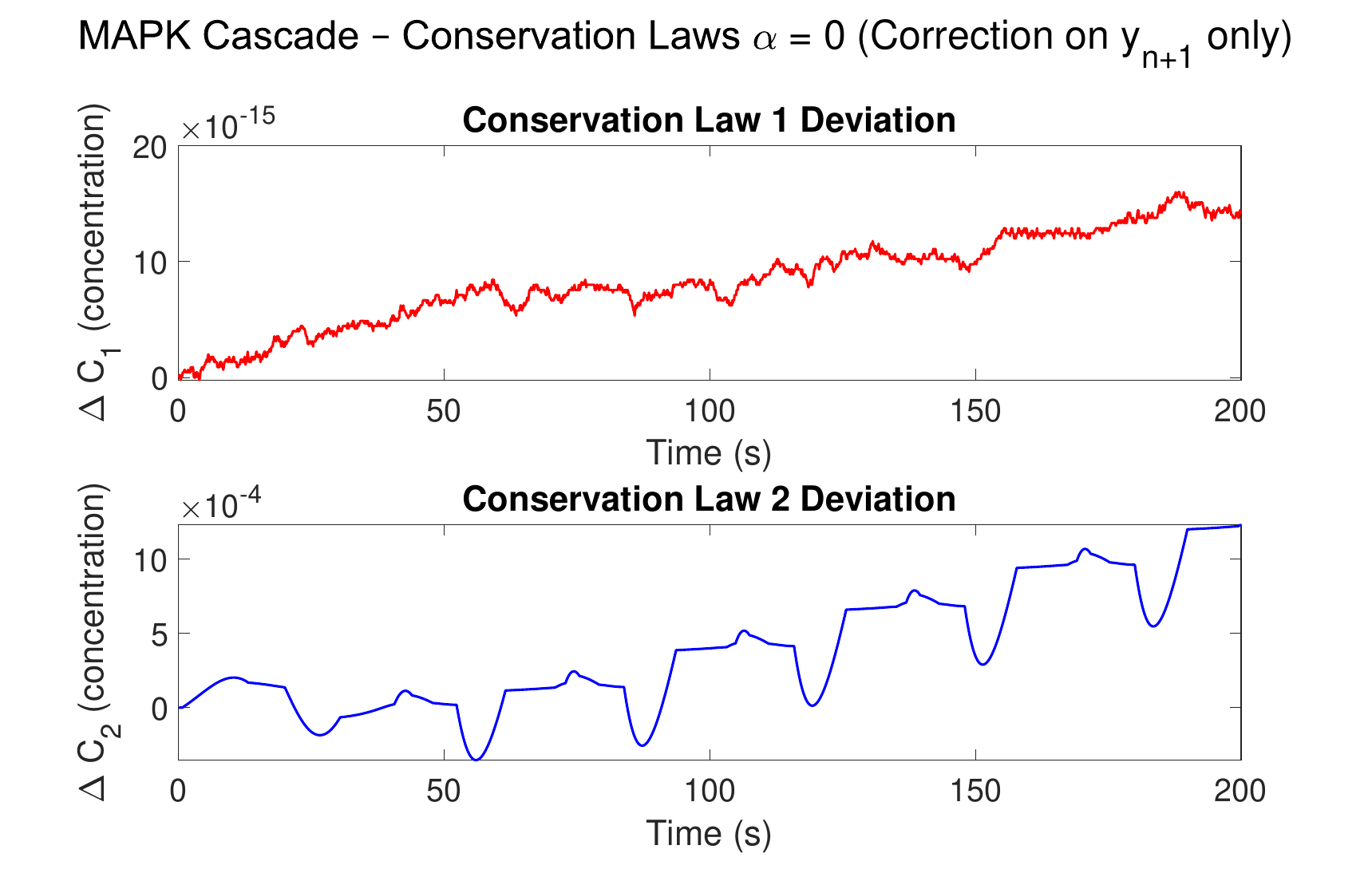}
\caption{MAPK Cascade \eqref{eqn:mapk-system}. Deviations from the conserved quantities with final-stage correction. Here, $C_1(t)=\mathbf{w}_1^\top \mathbf{y}(t)=y_1+y_4+y_6$ and $C_2(t)=\mathbf{w}_2^\top \mathbf{y}(t)=y_2+y_3+y_4+y_5$ denote the linear conservation laws of the MAPK cascade. The plotted quantities are deviations from the initial values, reported in concentration units.}
\label{fig:mapk-mass-0_last-stage}
\end{figure}

\begin{figure}[htbp]
\centering
\includegraphics[scale=0.36]{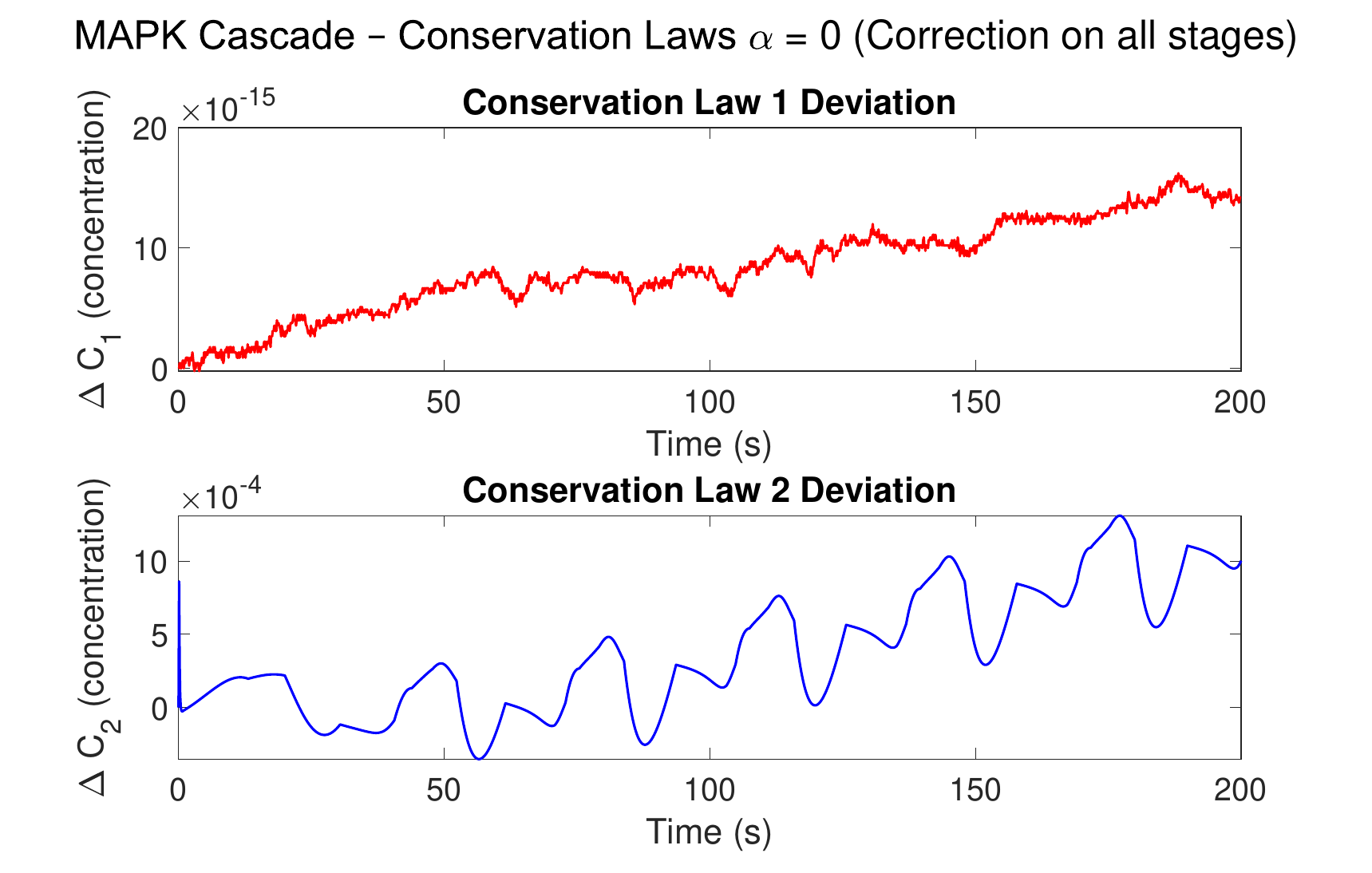}
\caption{MAPK Cascade \eqref{eqn:mapk-system}. Deviations from the conserved quantities with all-stage correction. Here, $C_1(t)=\mathbf{w}_1^\top \mathbf{y}(t)=y_1+y_4+y_6$ and $C_2(t)=\mathbf{w}_2^\top \mathbf{y}(t)=y_2+y_3+y_4+y_5$ denote the linear conservation laws of the MAPK cascade. The plotted quantities are deviations from the initial values, reported in concentration units.}
\label{fig:mapk-mass-0_all-stages}
\end{figure}

\begin{figure}[htbp]
\centering
\includegraphics[scale=0.36]{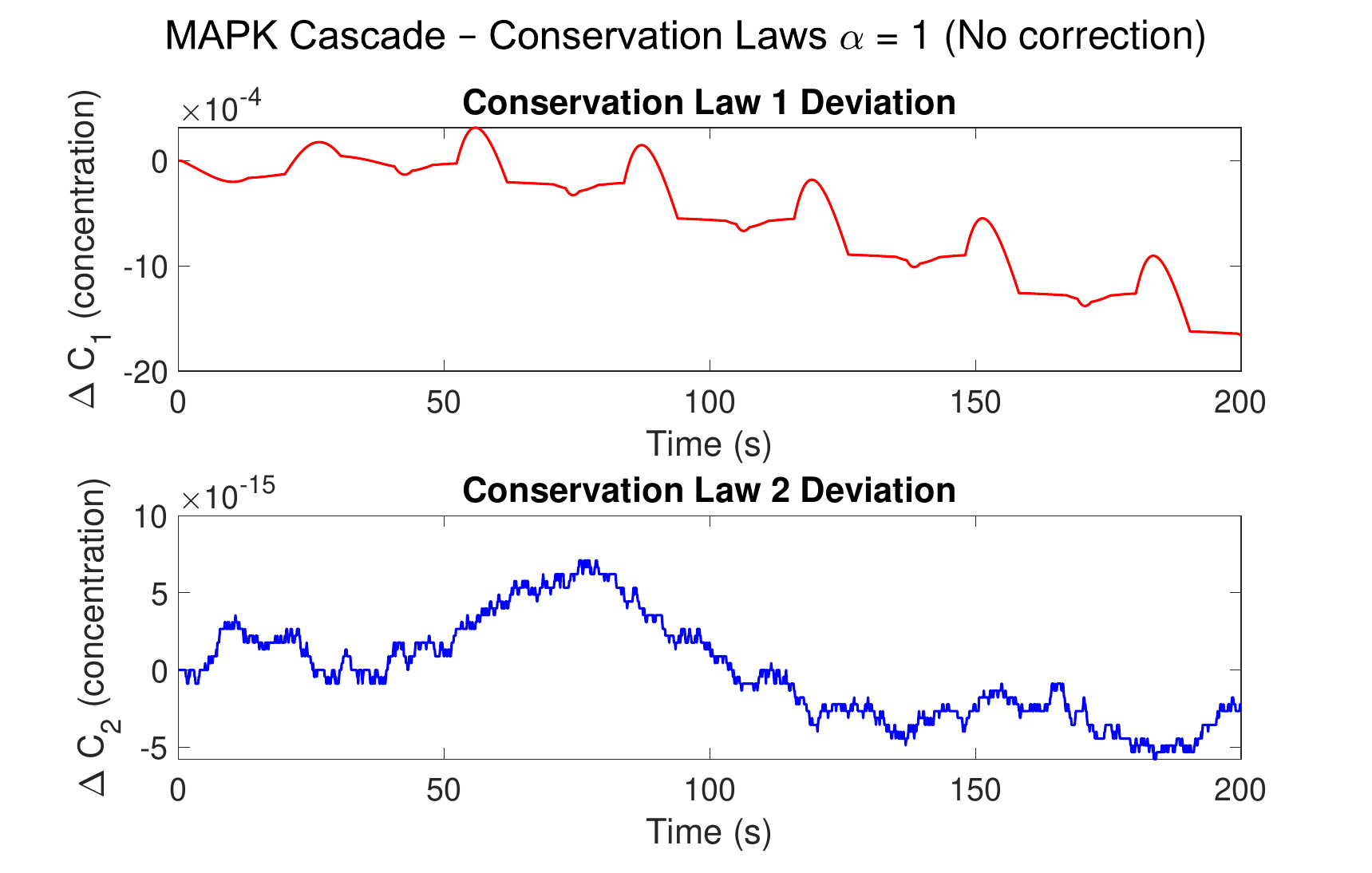}
\caption{MAPK Cascade \eqref{eqn:mapk-system}. Deviations from the conserved quantities for the baseline SDIRK scheme. Here, $C_1(t)=\mathbf{w}_1^\top \mathbf{y}(t)=y_1+y_4+y_6$ and $C_2(t)=\mathbf{w}_2^\top \mathbf{y}(t)=y_2+y_3+y_4+y_5$ denote the linear conservation laws of the MAPK cascade. The plotted quantities are deviations from the initial values, reported in concentration units.}
\label{fig:mapk-mass-1}
\end{figure}

\begin{figure}[htbp]
\centering
\includegraphics[scale=0.36]{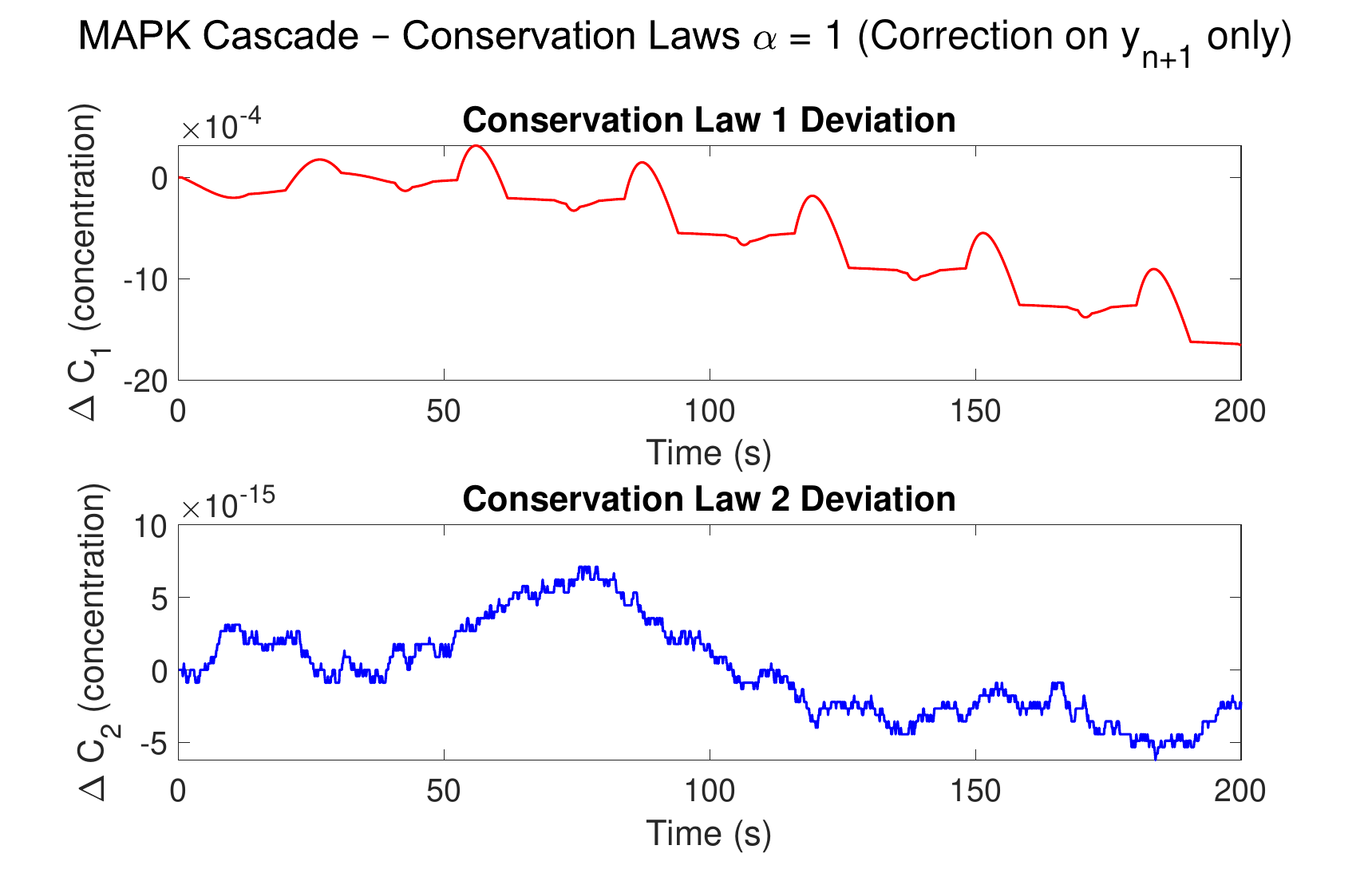}
\caption{MAPK Cascade \eqref{eqn:mapk-system}. Deviations from the conserved quantities with final-stage correction. Here, $C_1(t)=\mathbf{w}_1^\top \mathbf{y}(t)=y_1+y_4+y_6$ and $C_2(t)=\mathbf{w}_2^\top \mathbf{y}(t)=y_2+y_3+y_4+y_5$ denote the linear conservation laws of the MAPK cascade. The plotted quantities are deviations from the initial values, reported in concentration units.}
\label{fig:mapk-mass-1_last-stage}
\end{figure}

\begin{figure}[htbp]
\centering
\includegraphics[scale=0.36]{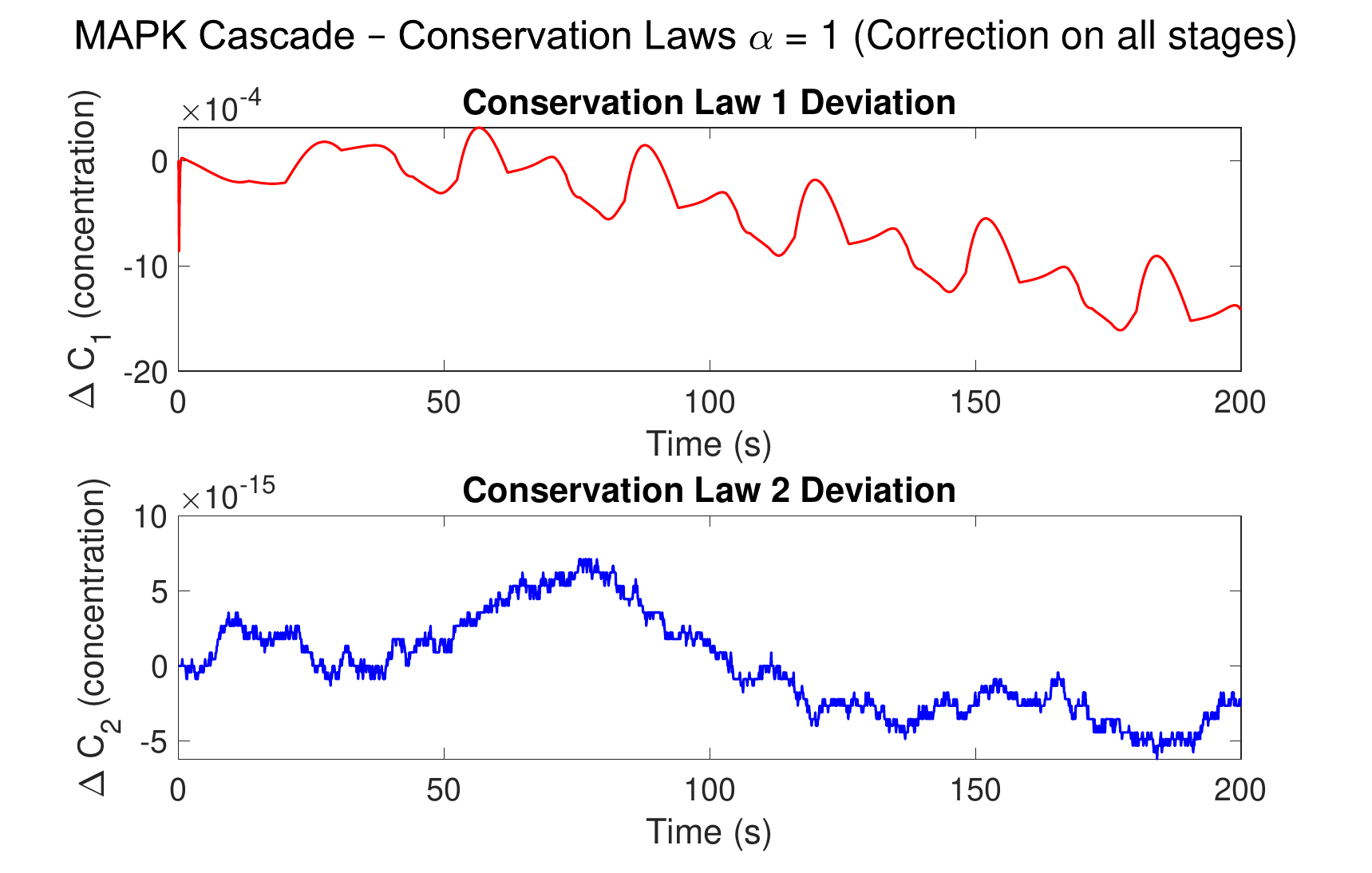}
\caption{MAPK Cascade \eqref{eqn:mapk-system}. Deviations from the conserved quantities with all-stage correction. Here, $C_1(t)=\mathbf{w}_1^\top \mathbf{y}(t)=y_1+y_4+y_6$ and $C_2(t)=\mathbf{w}_2^\top \mathbf{y}(t)=y_2+y_3+y_4+y_5$ denote the linear conservation laws of the MAPK cascade. The plotted quantities are deviations from the initial values, reported in concentration units.}
\label{fig:mapk-mass-1_all-stages}
\end{figure}

\subsubsection{Stratospheric Reaction}

The stratospheric system conserves the total number of oxygen and nitrogen atoms, defined by the invariants \eqref{eqn:strato-invariant}.

Table \ref{tab:stratospheric-invariant} summarizes the relative deviations in the
conserved oxygen and nitrogen atom counts for the stratospheric reaction system.
For all correction strategies, nitrogen conservation is maintained to machine
precision. Oxygen conservation is likewise preserved for the baseline and
final-stage corrected schemes, while a slightly larger deviation is observed when
the correction is applied at all internal stages. This increase remains small and
is attributed to accumulated round-off effects rather than a systematic loss of
conservation.

\begin{table}[htbp]
\centering
\caption{Stratospheric reaction invariant preservation over one day ($[12\cdot3600,\,36\cdot3600]$ seconds). Reported values are the maximum relative deviations $E_I$ defined in \eqref{eqn:invariant-error} for the total oxygen ($M_O$) and nitrogen ($M_N$) atom counts.}
\label{tab:stratospheric-invariant}
\begin{tabular}{lcc}
\toprule
Method & $E_{M_O}$ & $E_{M_N}$ \\
\midrule
Baseline SDIRK        & $4.89 \times 10^{-14}$ & $7.18 \times 10^{-15}$ \\
Final-stage correction & $4.89 \times 10^{-14}$ & $7.39 \times 10^{-15}$ \\
All-stage correction   & $1.49 \times 10^{-12}$ & $7.18 \times 10^{-15}$ \\
\bottomrule
\end{tabular}
\end{table}

\begin{figure}[htbp]
\centering
\includegraphics[scale=0.4]{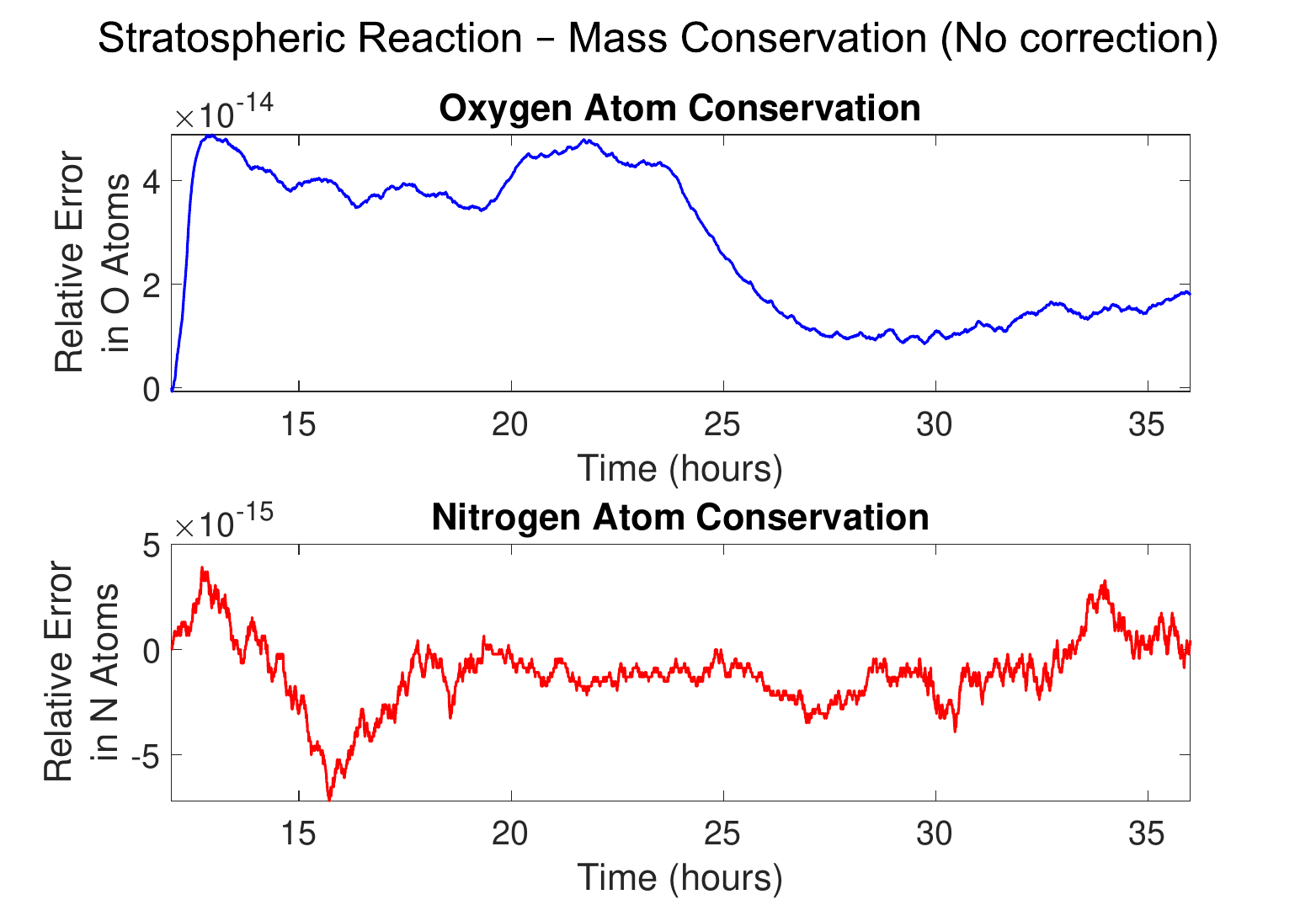}
\caption{Stratospheric Reaction \eqref{eqn:strato-ode}. Atom conservation for the baseline SDIRK scheme. The plotted quantities represent the relative error in total oxygen and nitrogen atom counts, normalized by their initial values.}
\label{fig:stratospheric-mass}
\end{figure}

\begin{figure}[htbp]
\centering
\includegraphics[scale=0.4]{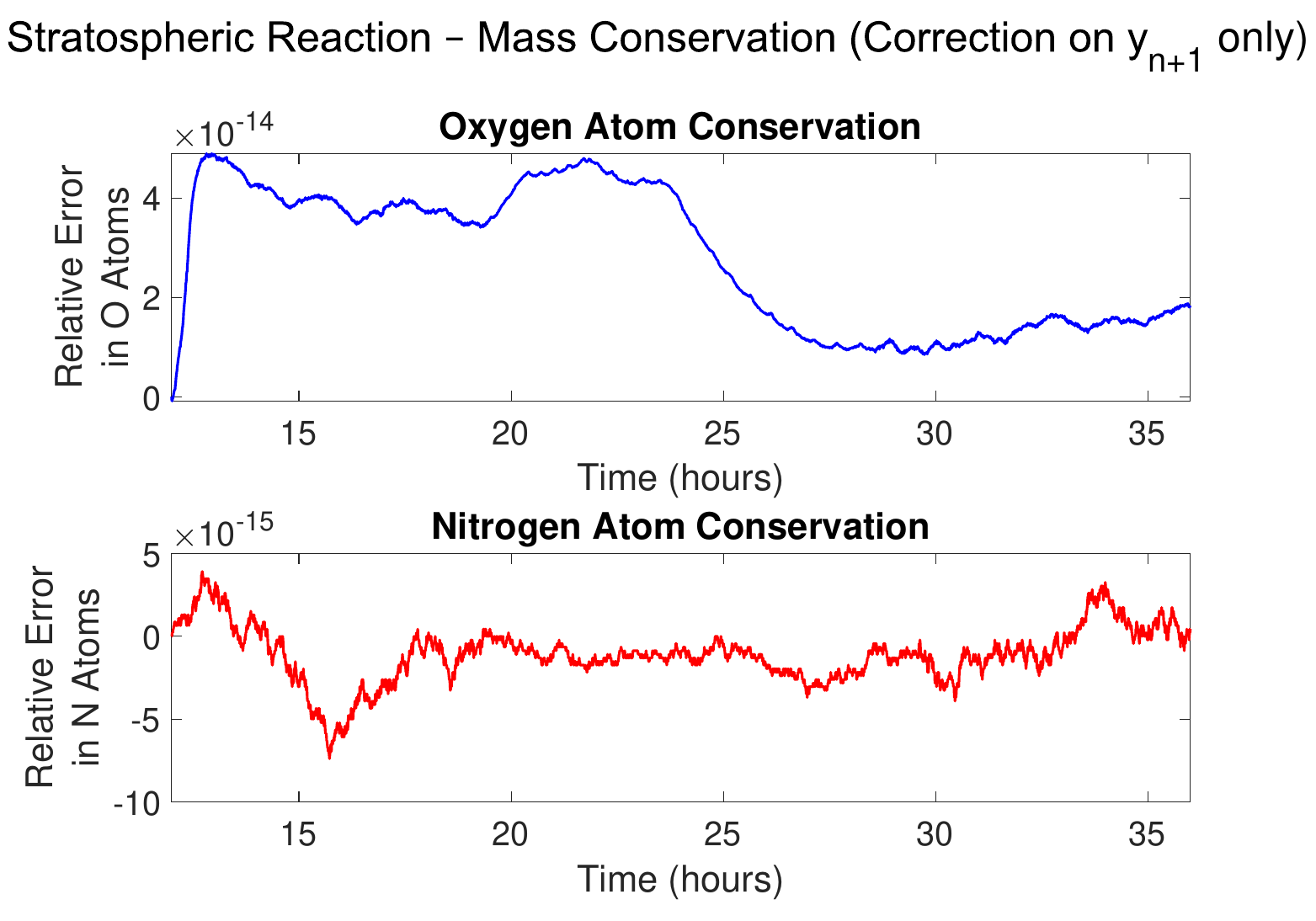}
\caption{Stratospheric Reaction \eqref{eqn:strato-ode}. Atom conservation with final-stage correction. The plotted quantities represent the relative error in total oxygen and nitrogen atom counts, normalized by their initial values.}
\label{fig:stratospheric-mass-last-stage}
\end{figure}

\begin{figure}[htbp]
\centering
\includegraphics[scale=0.4]{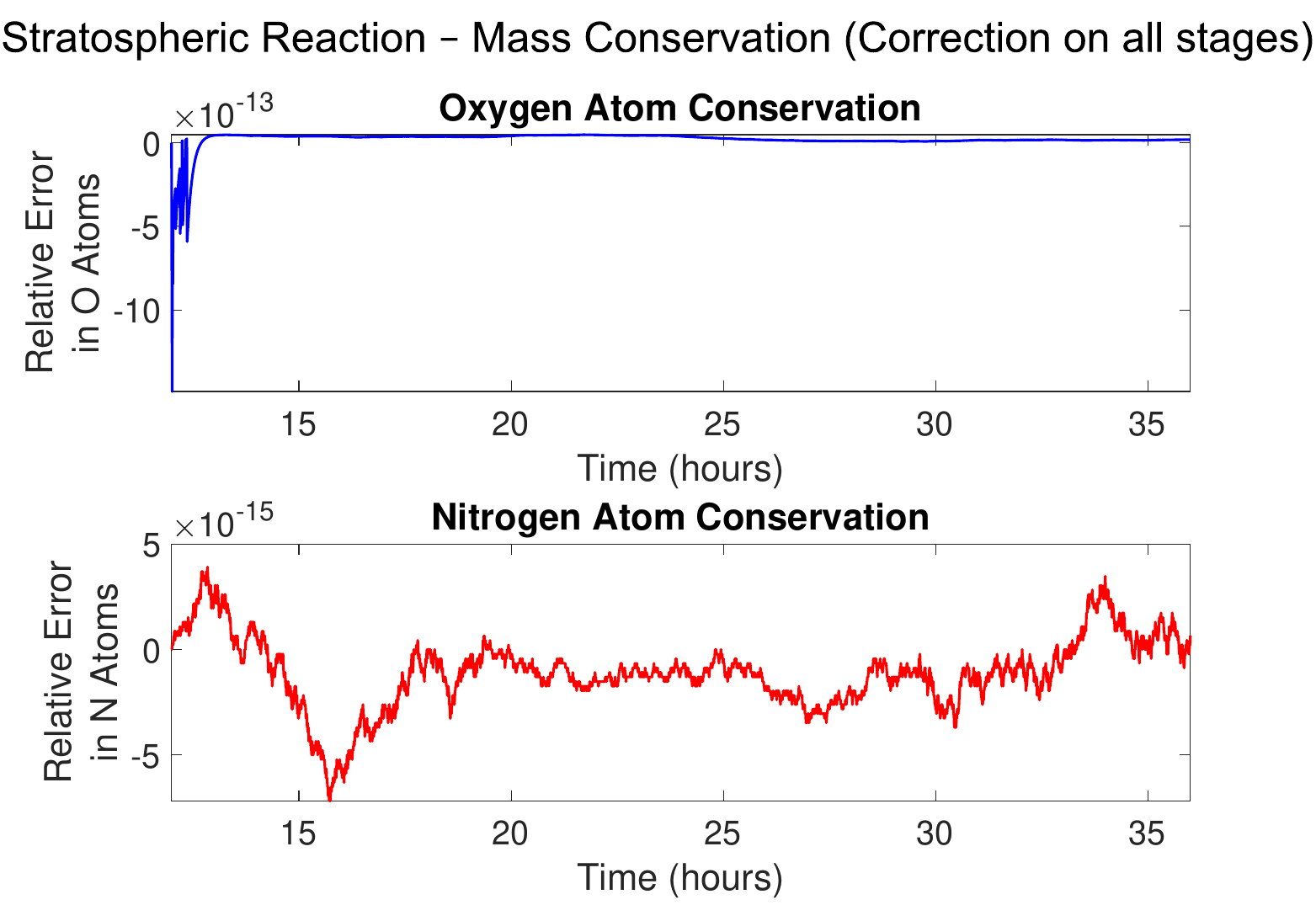}
\caption{Stratospheric Reaction \eqref{eqn:strato-ode}. Atom conservation with all-stage correction. The plotted quantities represent the relative error in total oxygen and nitrogen atom counts, normalized by their initial values.}
\label{fig:stratospheric-mass-all-stages}
\end{figure}

\subsubsection{KdV Equation}
The semi-discrete KdV equation preserves the total mass defined in \eqref{eqn:kdv-invariant}.

Over the simulation interval, the maximum relative mass deviation $E_M$ defined in \eqref{eqn:invariant-error} was:
\begin{itemize}
    \item Baseline SDIRK: $8.88 \times 10^{-16}$.
    \item Final-stage correction: $7.40 \times 10^{-16}$.
\end{itemize}

\begin{figure}[htbp]
\centering
\includegraphics[scale=0.5]{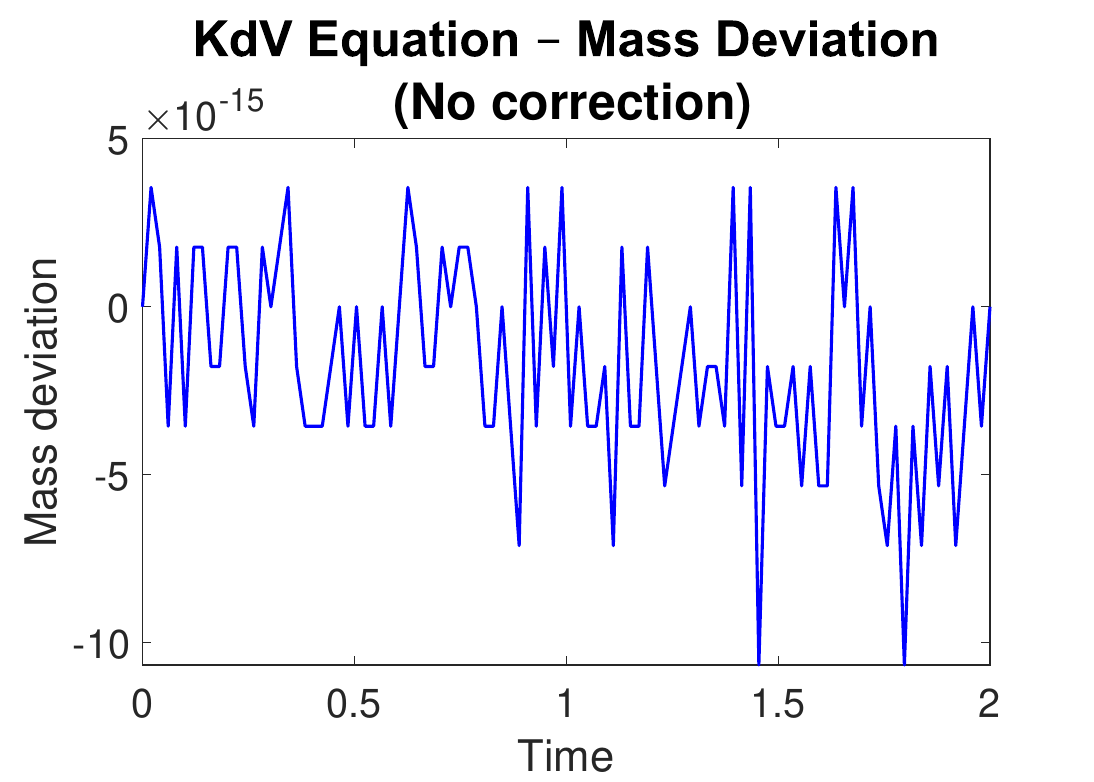}
\caption{KdV equation \eqref{eqn:kdv-pde}. Deviation of the discrete mass invariant \eqref{eqn:kdv-invariant} from its initial value for the baseline SDIRK scheme (nondimensional units).}
\label{fig:kdv-mass}
\end{figure}

\begin{figure}[htbp]
\centering
\includegraphics[scale=0.5]{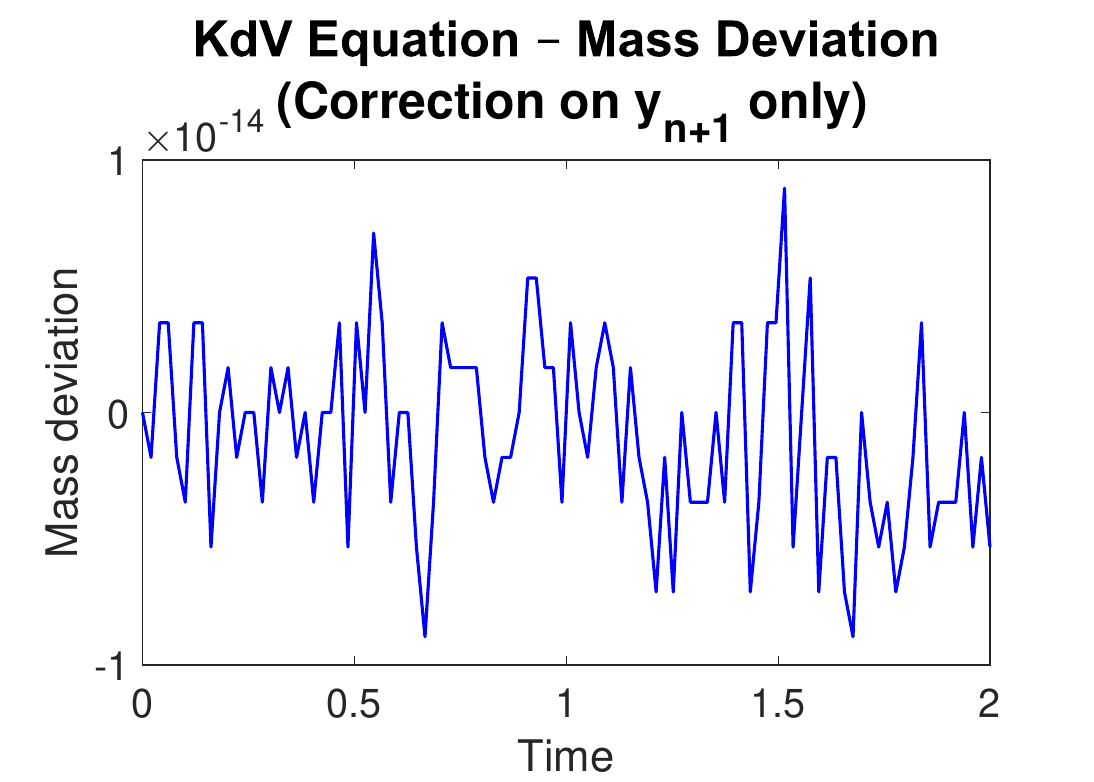}
\caption{KdV equation \eqref{eqn:kdv-pde}. Deviation of the discrete mass invariant \eqref{eqn:kdv-invariant} from its initial value with final-stage correction (nondimensional units).}
\label{fig:kdv-mass-last-stage}
\end{figure}

\subsubsection{Summary}

For all benchmark problems, the deviations in invariant quantities remained at or near machine precision whenever the conservation law is structurally exact. For example, total concentration in the Robertson reaction and total mass in the semi-discrete KdV equation.

For systems with multiple conservation laws where only one can be preserved by the construction of the $\mathbf{G}$ matrix (MAPK cascade and stratospheric chemistry), the preserved invariant remained accurate to machine precision. The other invariants showed small deviations, consistent with truncation errors.

Importantly, applying positivity-preserving corrections did not significantly deteriorate invariant preservation. In some cases (e.g., MAPK), the correction slightly reduced the deviation, while in others (e.g., oxygen conservation in the stratospheric system) it led to a marginal increase due to accumulated round-off. Overall, the proposed corrections retain the conservation properties of the underlying SDIRK schemes while enforcing positivity.

\subsection{Empirical Order Assesment}
\label{subsec:order}

To assess whether the positivity-preserving corrections affect the formal accuracy of the underlying methods, we performed order validation on four representative problems: the Robertson reaction system, the stratospheric reaction mechanism, the MAPK cascade, and KdV equation. These problems span a wide range of stiffness and nonlinear dynamics, providing a thorough testbed for the proposed approach.

The adaptive step-size integration mode was used for the ODE test problems. Relative and absolute tolerances were set equal ($\text{RelTol} = \text{AbsTol}$) and decreased simultaneously over several orders of magnitude until the asymptotic convergence regime was observed. For each tolerance value, the global error at the final time was compared to a reference solution computed with very tight absolute and relative tolerances both equal to ($10^{-14}$). For the KdV equation, a fixed time-step discretization was employed. Observed convergence slopes were obtained from a least-squares linear fit of the log-log data. Throughout this section, SDIRK21, SDIRK32, and SDIRK43 denote second-, third-, and fourth-order methods, respectively. Observed convergence slopes are reported accordingly.

We compared two variants of the positivity-preserving correction:
\begin{enumerate}
    \item \textbf{Final-stage correction:} only the stiffly accurate stage $\mathbf{y}_{n+1}$ is clipped and rescaled.
    \item \textbf{Full-stage correction:} all intermediate stages $\mathbf{Y}_i$ are clipped before being used in the final combination.
\end{enumerate}

\subsubsection{Robertson Reaction}

The Robertson reaction is a classic stiff chemical kinetics model with widely separated time scales. To avoid dominance of long-term truncation error, we restricted the integration interval to $[0, 5000]$ with absolute and relative tolerances from $10^{-5}$ to $10^{-8}$.

For the final-stage correction (Figure \ref{fig:sdirk-rob_last-stage}) and for full-stage correction (Figure \ref{fig:sdirk-rob_all-stages}) these results confirm second-, third-, and fourth-order convergence for SDIRK21, SDIRK32, and SDIRK43, respectively, even in the highly stiff Robertson system.

\begin{figure}[htbp]
    \centering
    \begin{subfigure}[b]{0.45\textwidth}
        \centering
        \includegraphics[width=\textwidth]{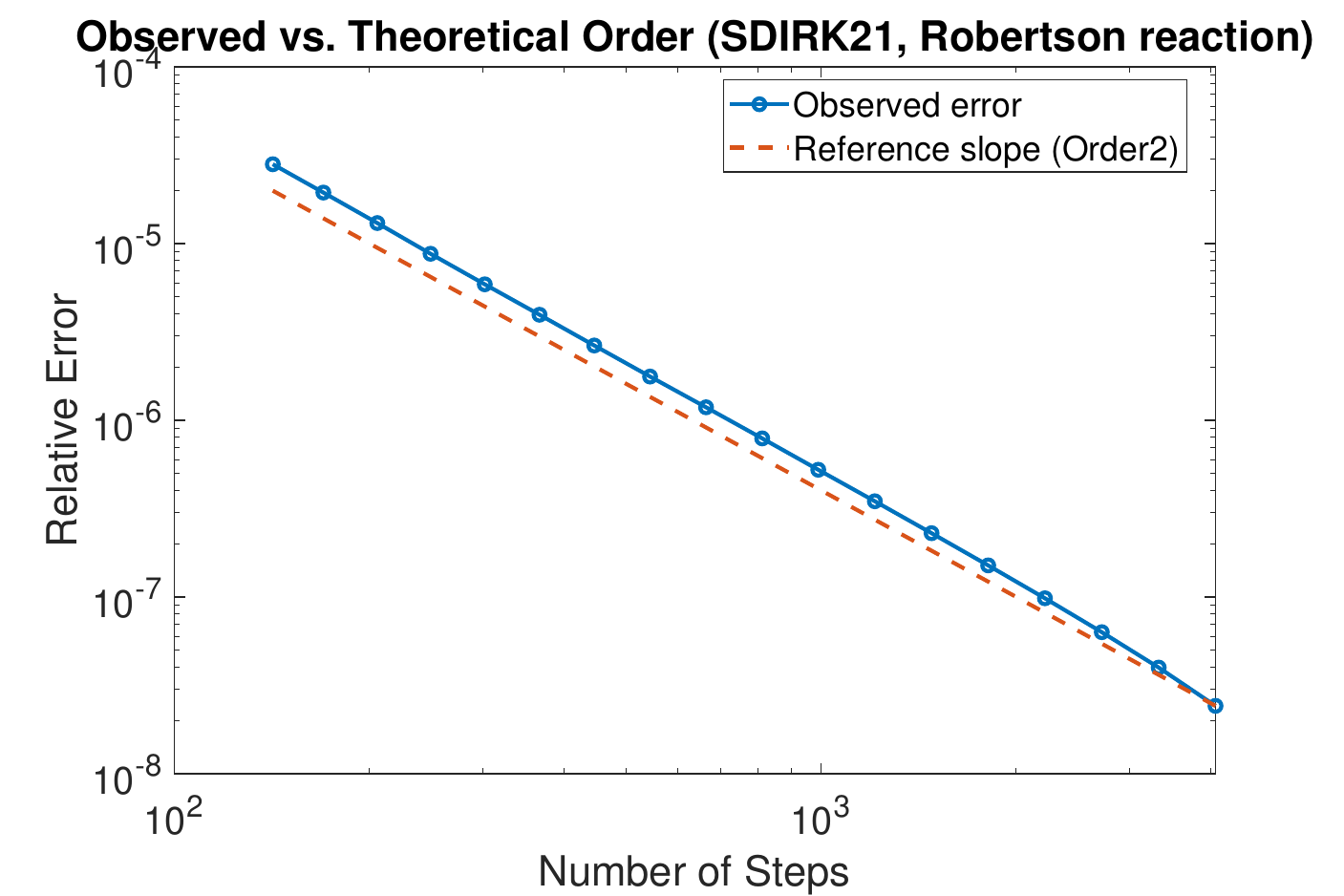}
        \caption{SDIRK21}
        \label{sfig:sdirk-rob-2_last-stage}
    \end{subfigure}
    \begin{subfigure}[b]{0.45\textwidth}
        \centering
        \includegraphics[width=\textwidth]{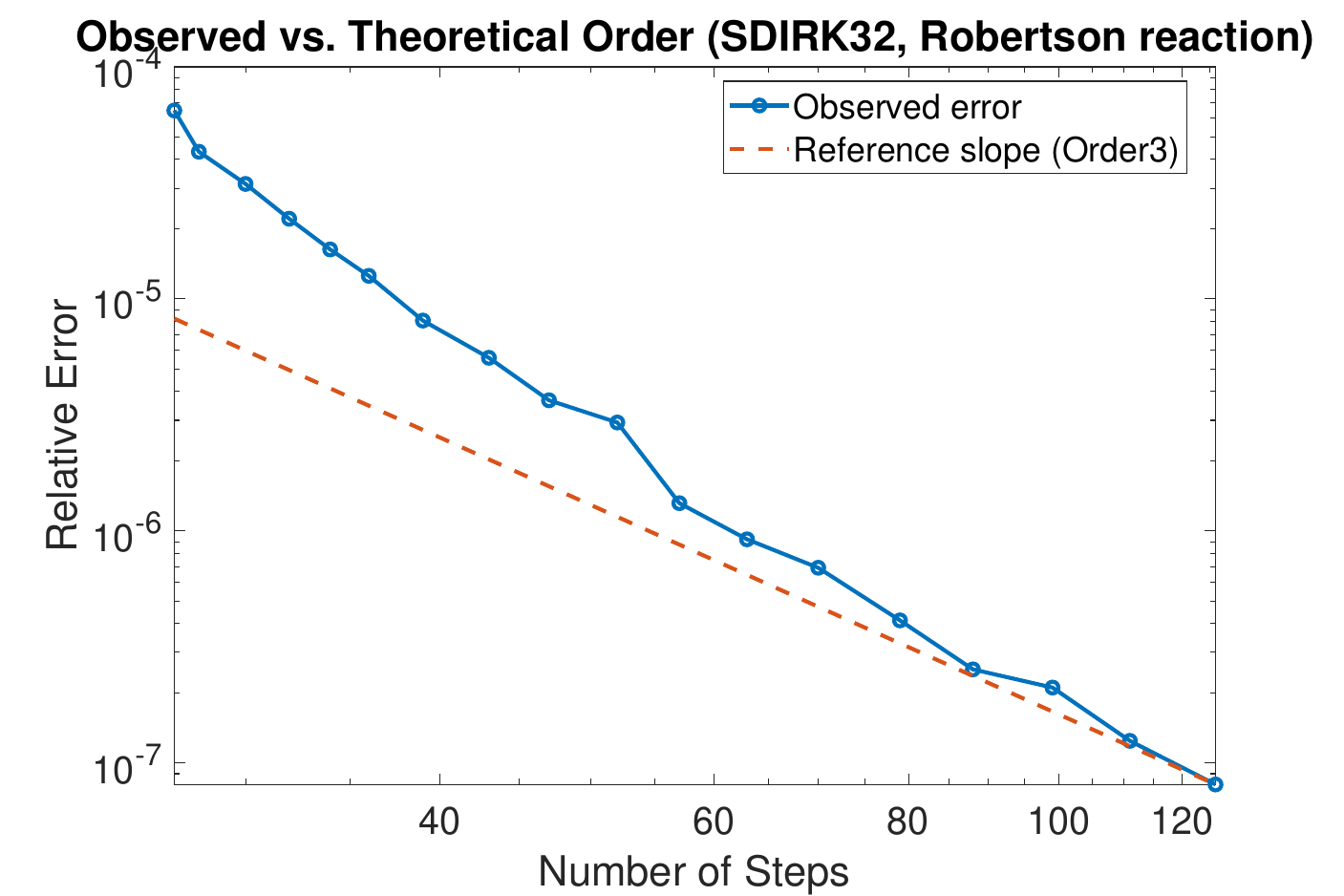}
        \caption{SDIRK32}
        \label{sfig:sdirk-rob-3_last-stage}
    \end{subfigure}
    \begin{subfigure}[b]{0.45\textwidth}
        \centering
        \includegraphics[width=\textwidth]{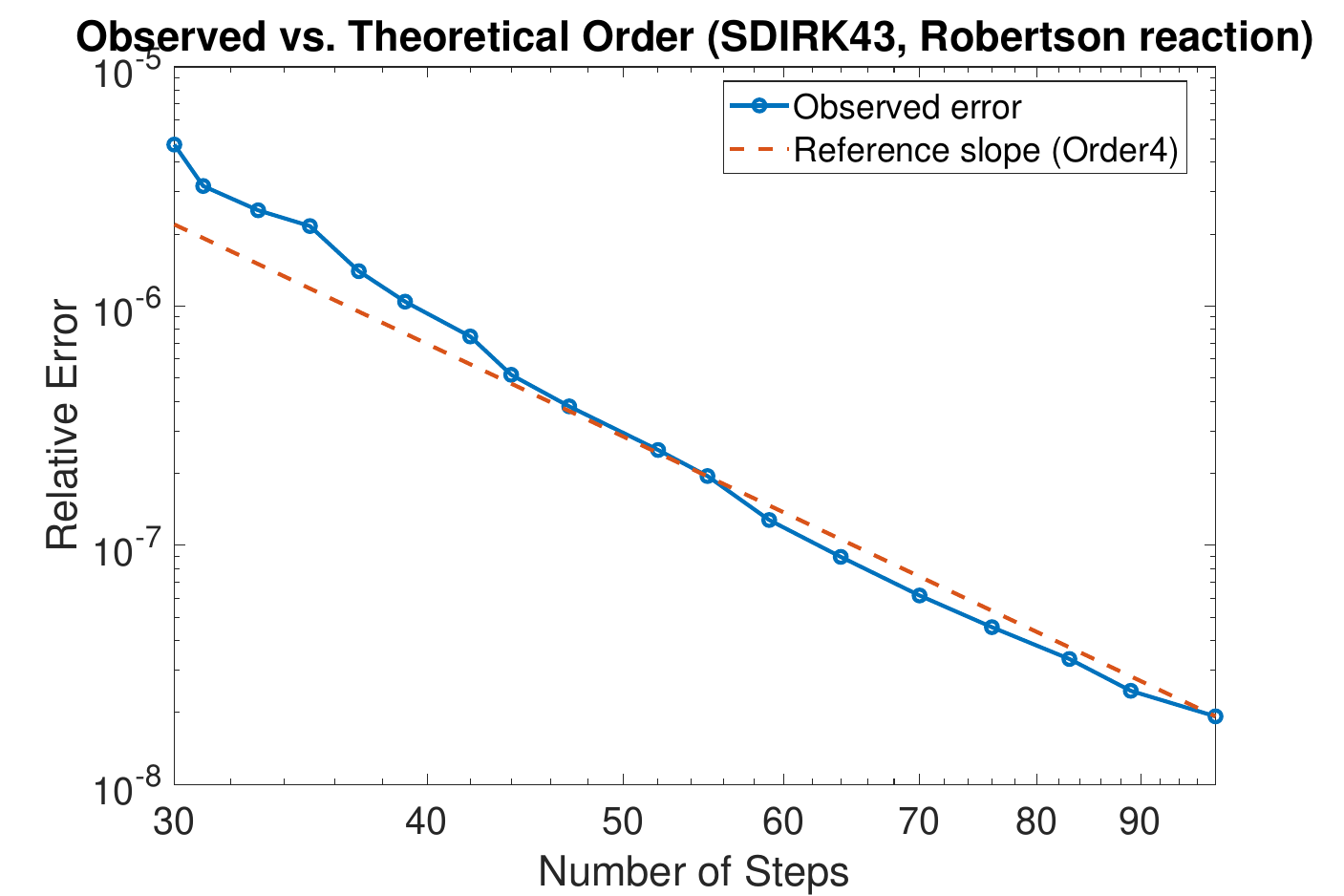}
        \caption{SDIRK43}
        \label{sfig:sdirk-rob-4_last-stage}
    \end{subfigure}
       \caption{Robertson reaction system \eqref{eqn:Robertson}. Observed convergence orders for SDIRK methods with positivity correction applied only to the final stage.}
       \label{fig:sdirk-rob_last-stage}
\end{figure}

\begin{figure}[htbp]
    \centering
    \begin{subfigure}[b]{0.49\textwidth}
        \centering
        \includegraphics[width=\textwidth]{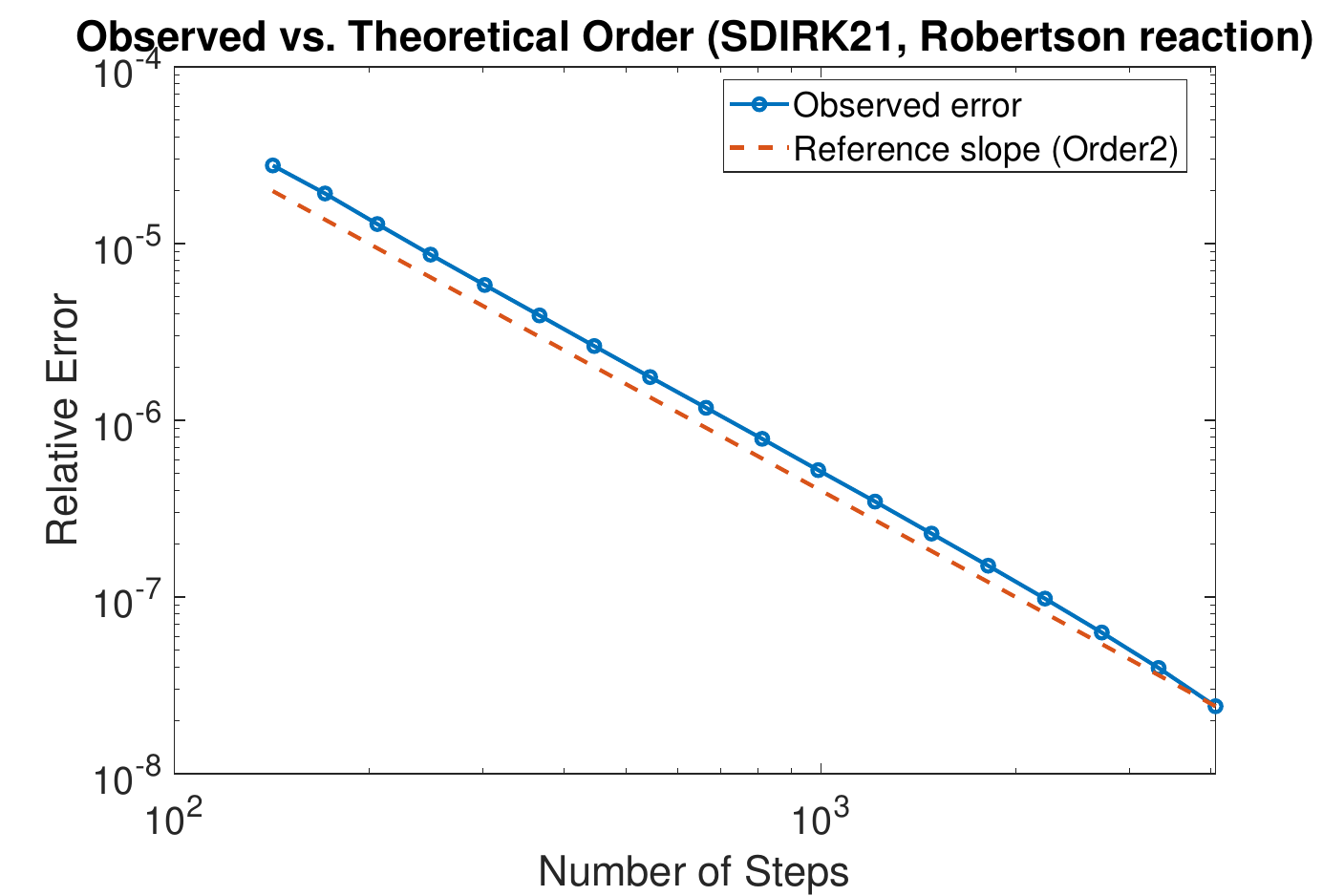}
        \caption{SDIRK21}
        \label{sfig:sdirk-rob-2_all-stages}
    \end{subfigure}
    \begin{subfigure}[b]{0.49\textwidth}
        \centering
        \includegraphics[width=\textwidth]{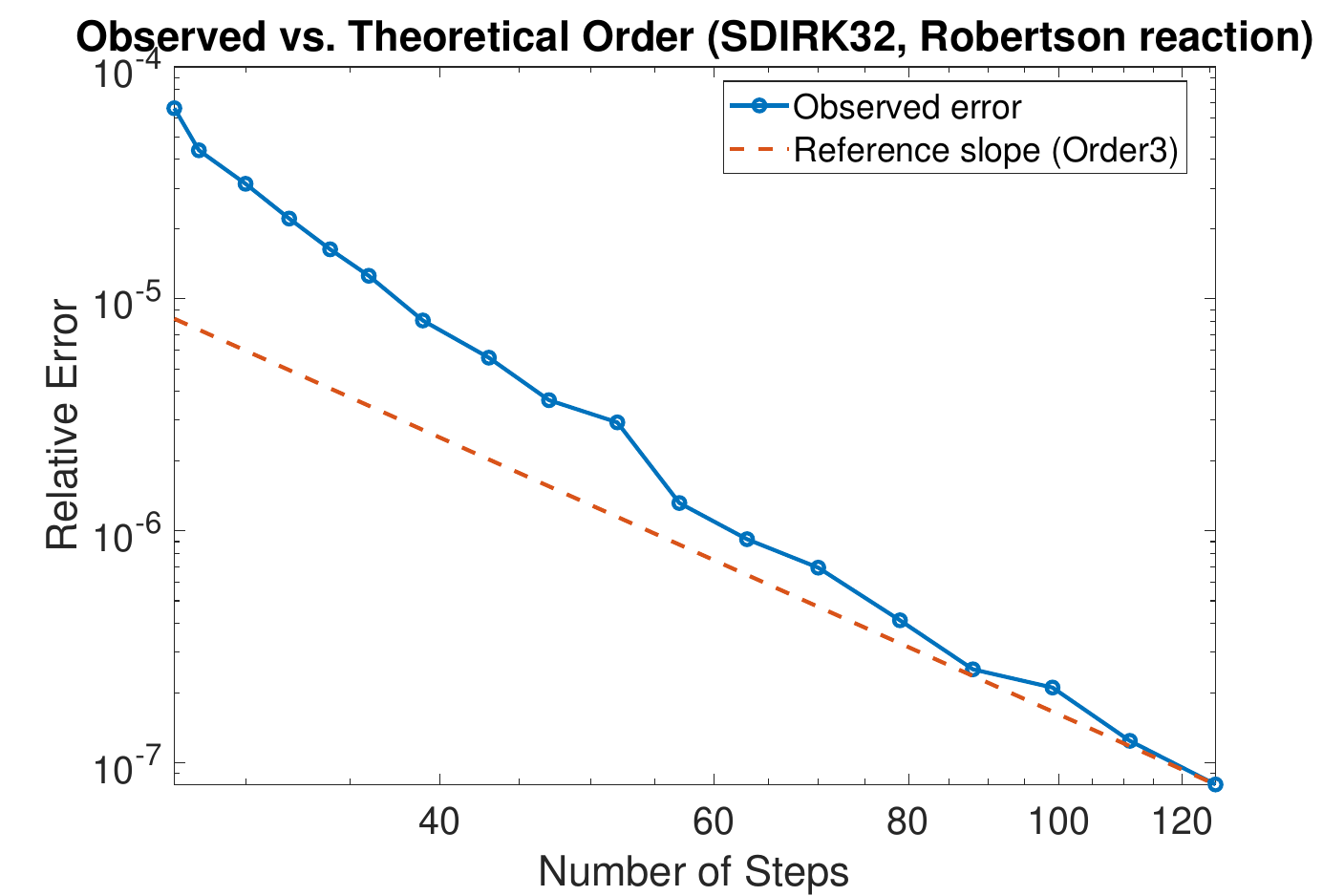}
        \caption{SDIRK32}
        \label{sfig:sdirk-rob-3_all-stages}
    \end{subfigure}
    \begin{subfigure}[b]{0.49\textwidth}
        \centering
        \includegraphics[width=\textwidth]{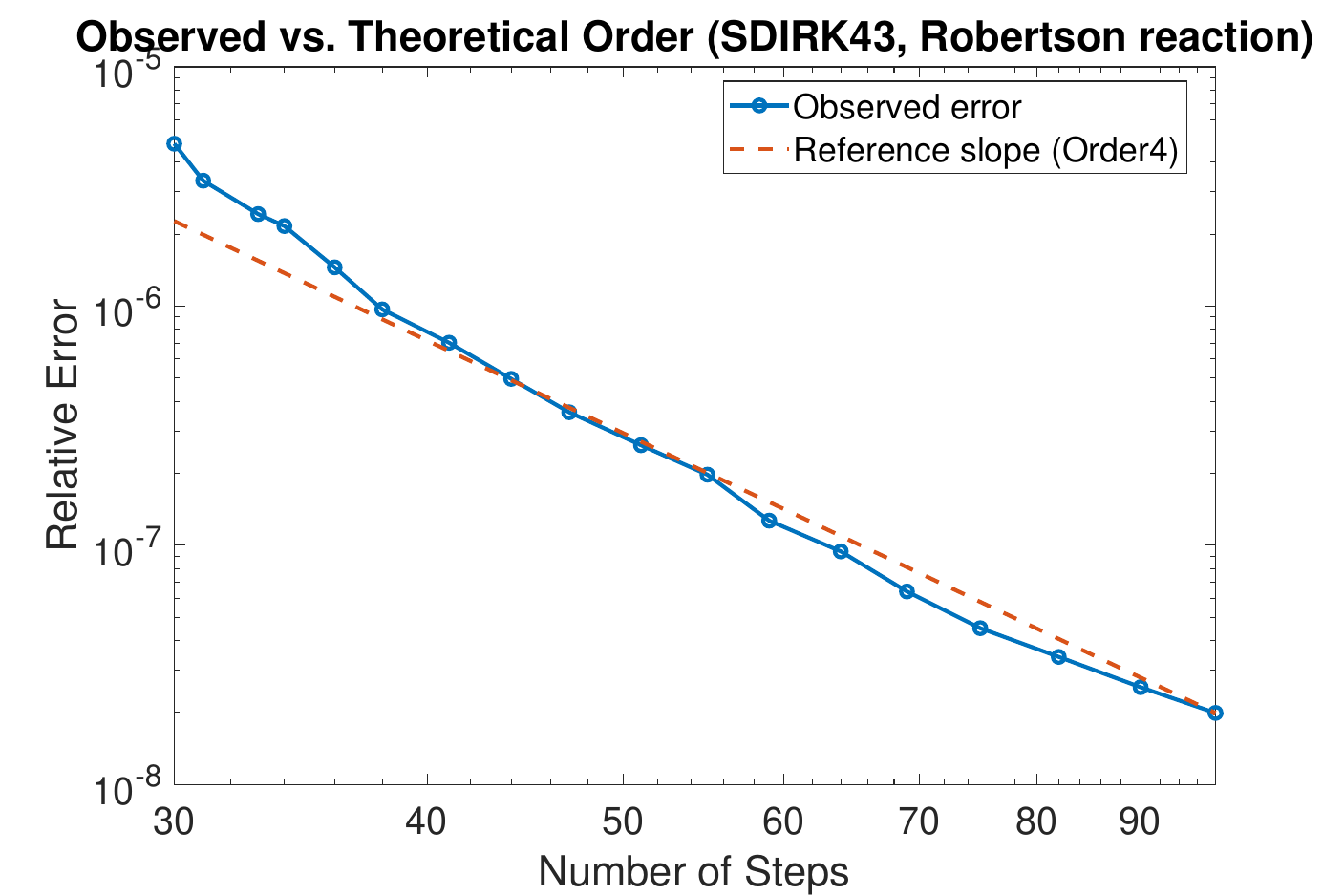}
        \caption{SDIRK43}
        \label{sfig:sdirk-rob-4_all-stages}
    \end{subfigure}
       \caption{Robertson reaction system \eqref{eqn:Robertson}. Observed convergence orders for SDIRK methods with positivity correction applied to all stages.}
       \label{fig:sdirk-rob_all-stages}
\end{figure}

\subsubsection{MAPK Cascade}

The MAPK cascade is a moderately stiff multiscale biochemical signaling network. To observe the asymptotic convergence regime, we integrated over the interval $[0,60]$ using SDIRK21, SDIRK32, and SDIRK43 with absolute ans relative tolerances ranging from $10^{-5}$ to $10^{-8}$; the reference solution was computed with tolerance $10^{-14}$.

With final-stage correction, the measured slopes were close to the theoretical values, confirming that the correction does not alter the convergence rate (Figure \ref{fig:sdirk-mapk_last-stage}). With full-stage correction, the slopes remained essentially unchanged (Figure \ref{fig:sdirk-mapk_all-stages}). The measured slopes are consistent with the nominal orders of the underlying SDIRK methods, indicating no order degradation due to the correction.

\begin{figure}[htbp]
    \centering
    \begin{subfigure}[b]{0.49\textwidth}
        \centering
        \includegraphics[width=\textwidth]{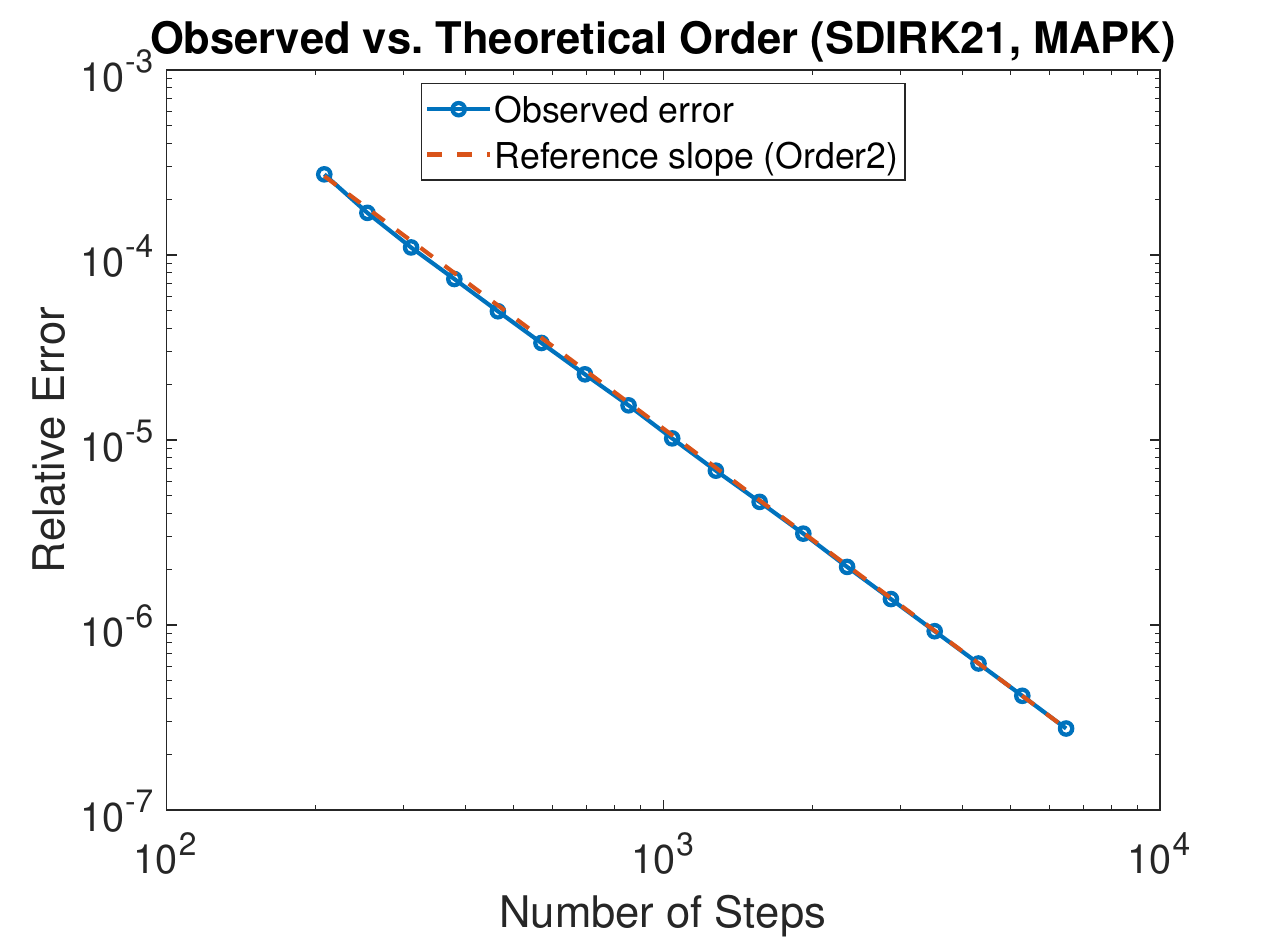}
        \caption{SDIRK21}
        \label{sfig:sdirk-mapk-2_last-stage}
    \end{subfigure}
    \begin{subfigure}[b]{0.49\textwidth}
        \centering
        \includegraphics[width=\textwidth]{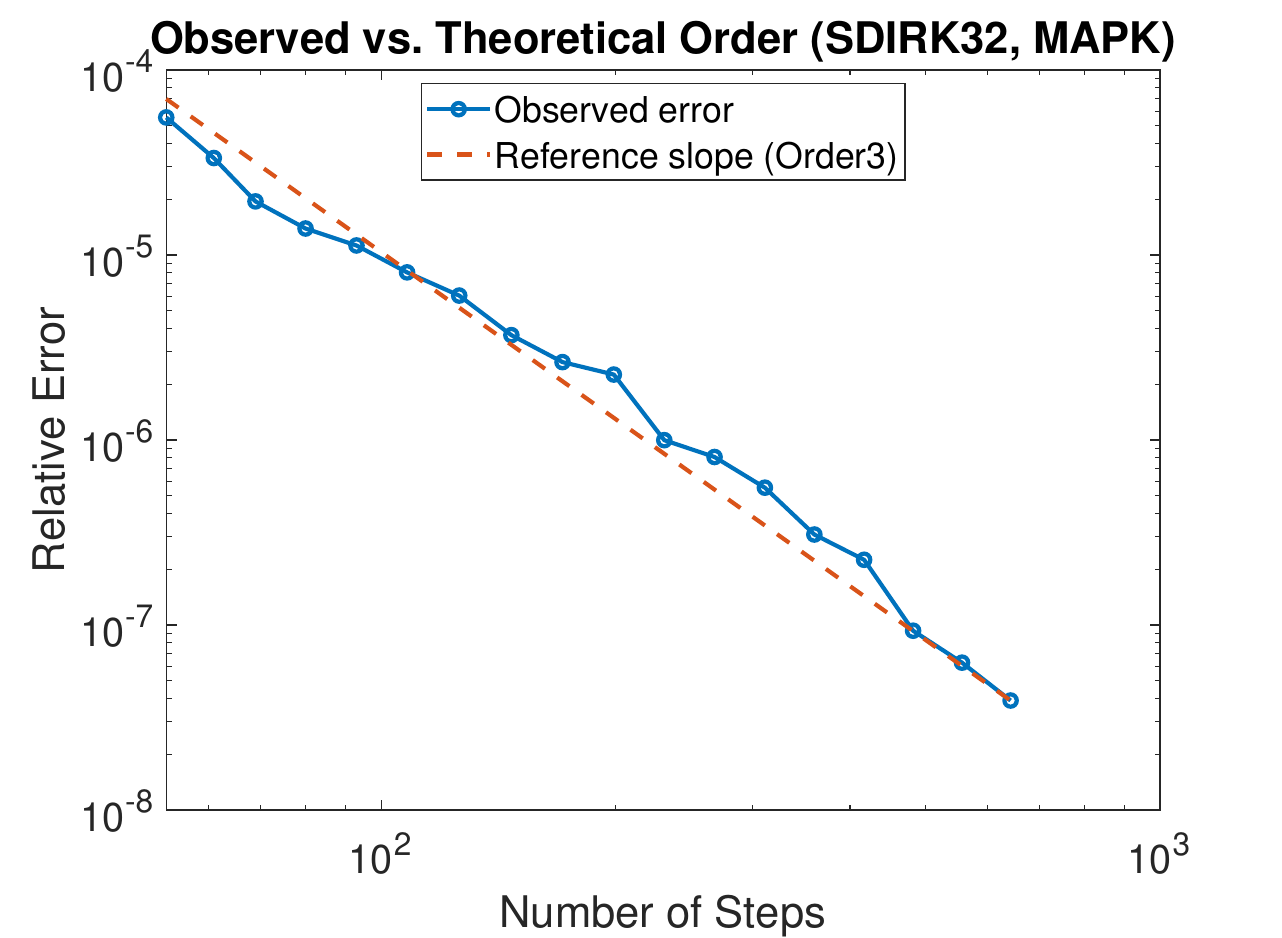}
        \caption{SDIRK32}
        \label{sfig:sdirk-mapk-3_last-stage}
    \end{subfigure}
    \begin{subfigure}[b]{0.49\textwidth}
        \centering
        \includegraphics[width=\textwidth]{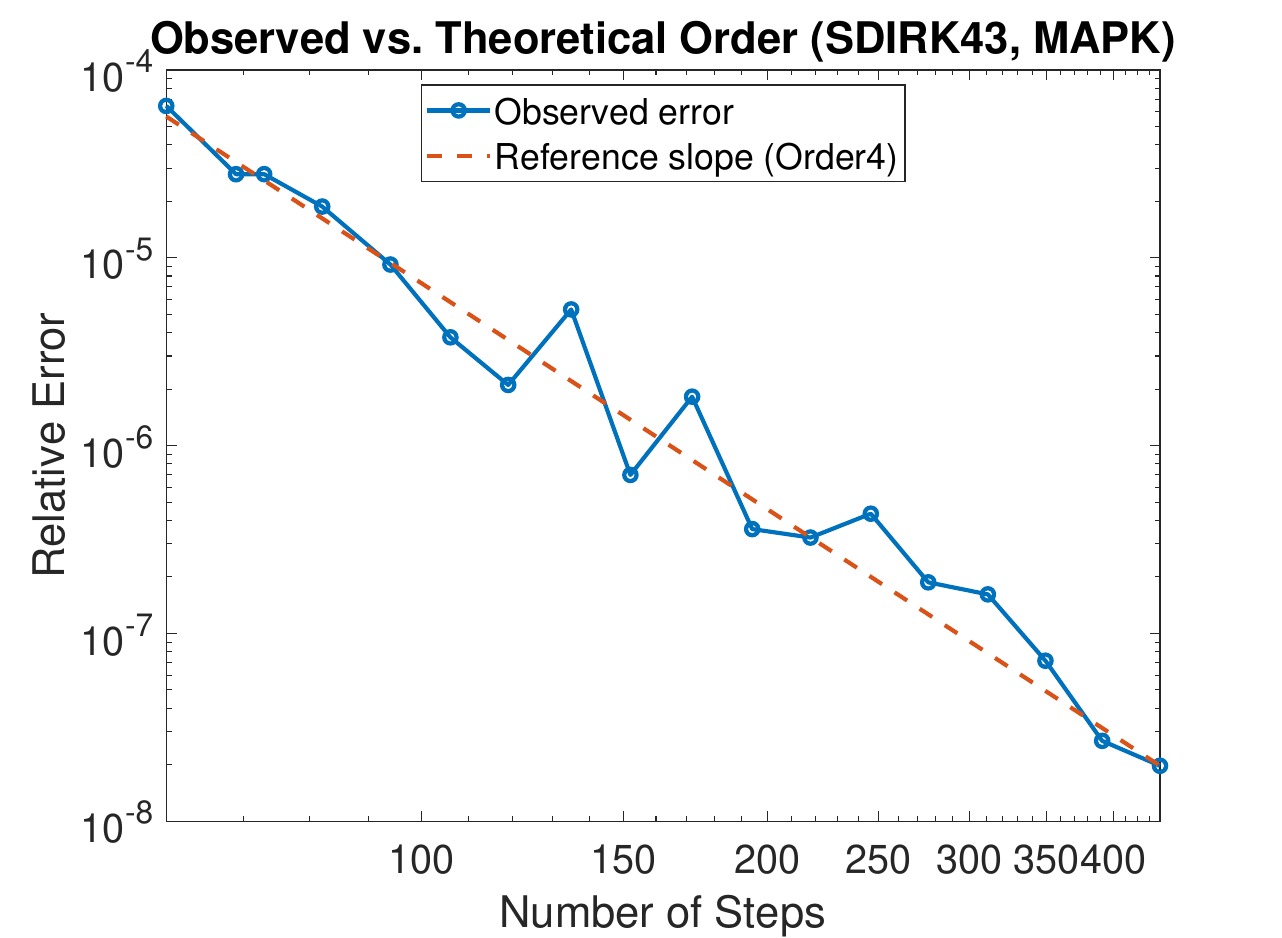}
        \caption{SDIRK43}
        \label{sfig:sdirk-mapk-4_last-stage}
    \end{subfigure}
       \caption{MAPK Cascade \eqref{eqn:mapk-system}. Observed convergence orders for SDIRK methods  with positivity correction applied only to the final stage.}
       \label{fig:sdirk-mapk_last-stage}
\end{figure}

\begin{figure}[htbp]
    \centering
    \begin{subfigure}[b]{0.49\textwidth}
        \centering
        \includegraphics[width=\textwidth]{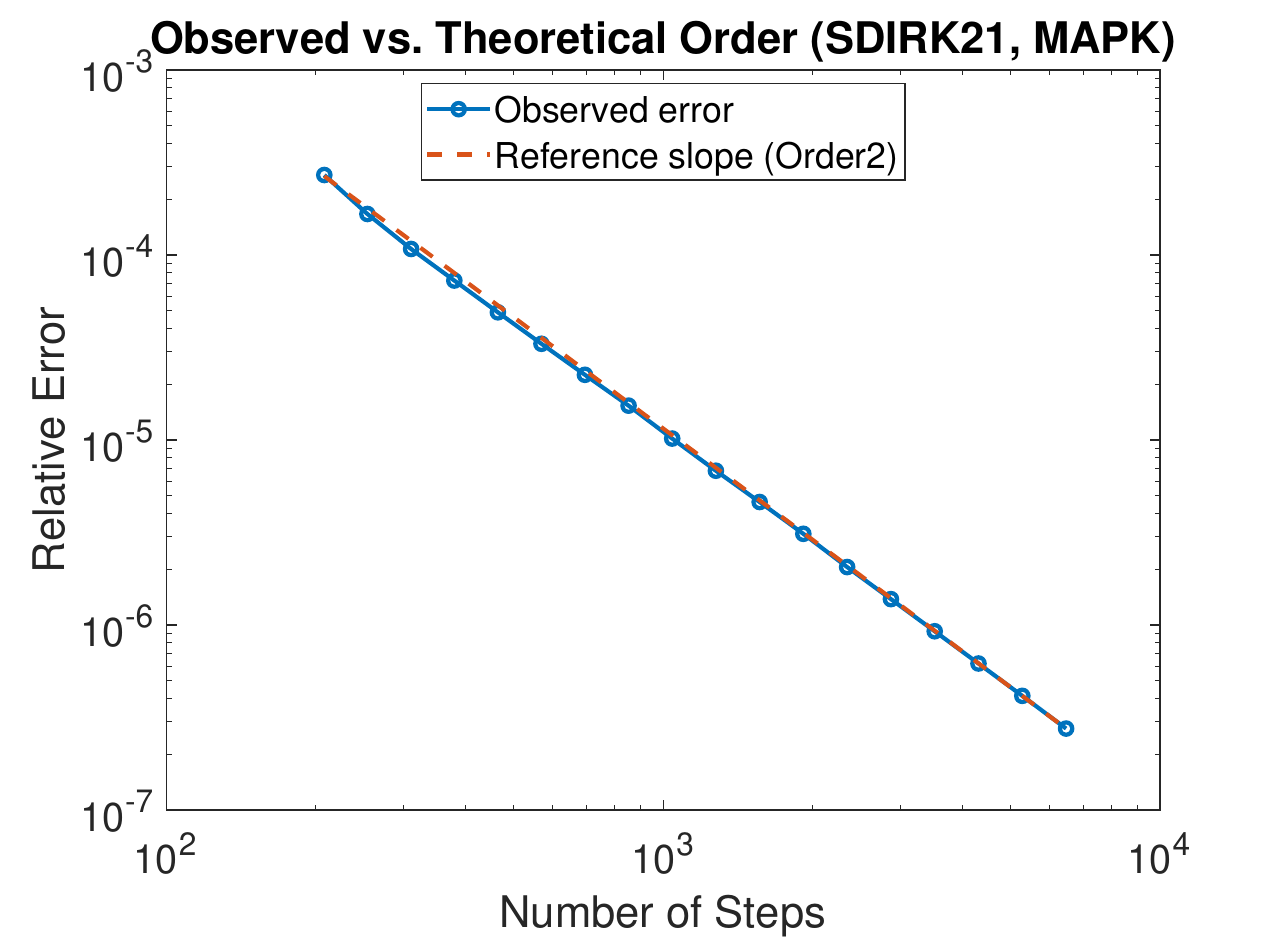}
        \caption{SDIRK21}
        \label{sfig:sdirk-mapk-2_all-stages}
    \end{subfigure}
    \begin{subfigure}[b]{0.49\textwidth}
        \centering
        \includegraphics[width=\textwidth]{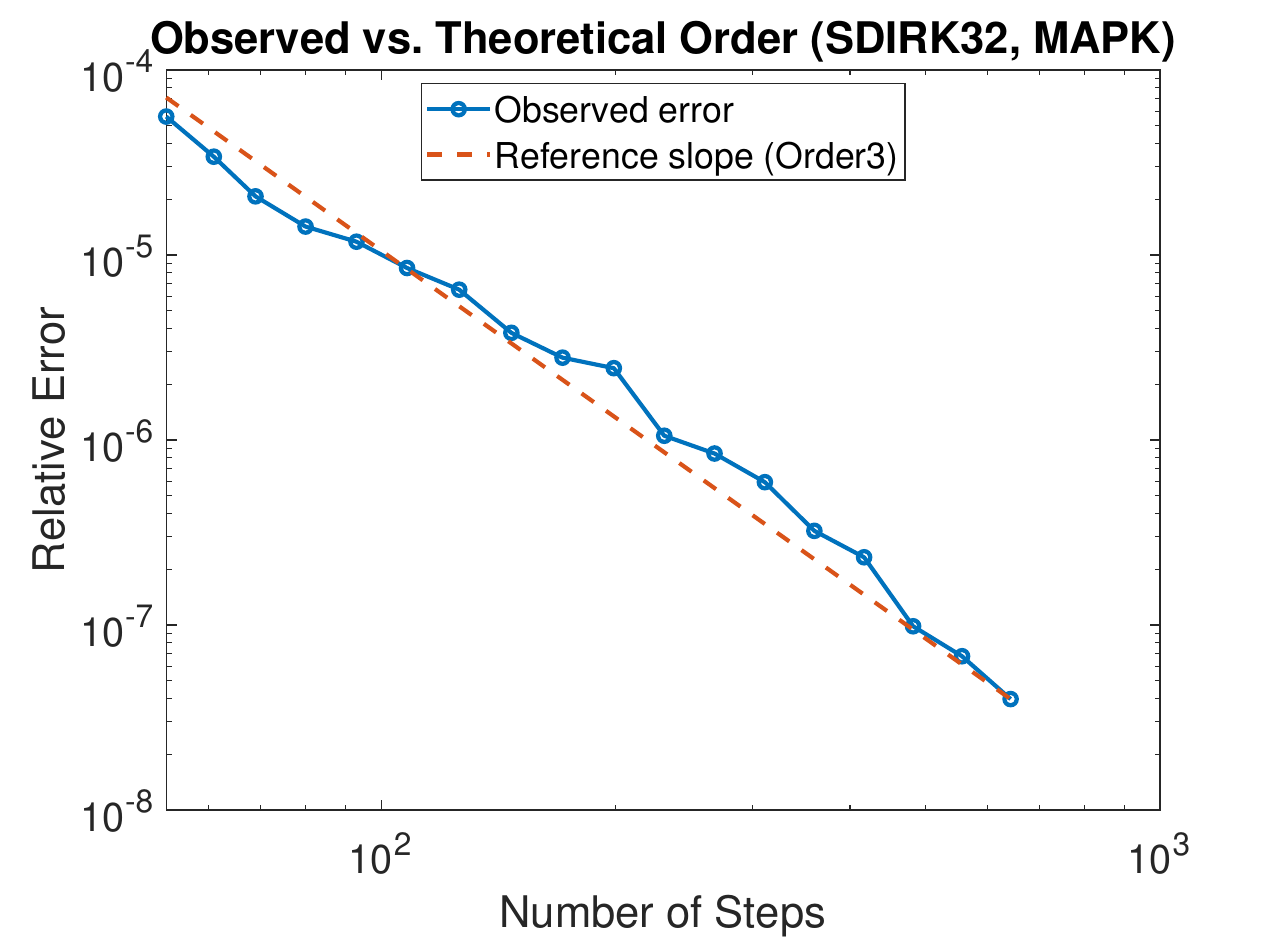}
        \caption{SDIRK32}
        \label{sfig:sdirk-mapk-3_all-stages}
    \end{subfigure}
    \begin{subfigure}[b]{0.49\textwidth}
        \centering
        \includegraphics[width=\textwidth]{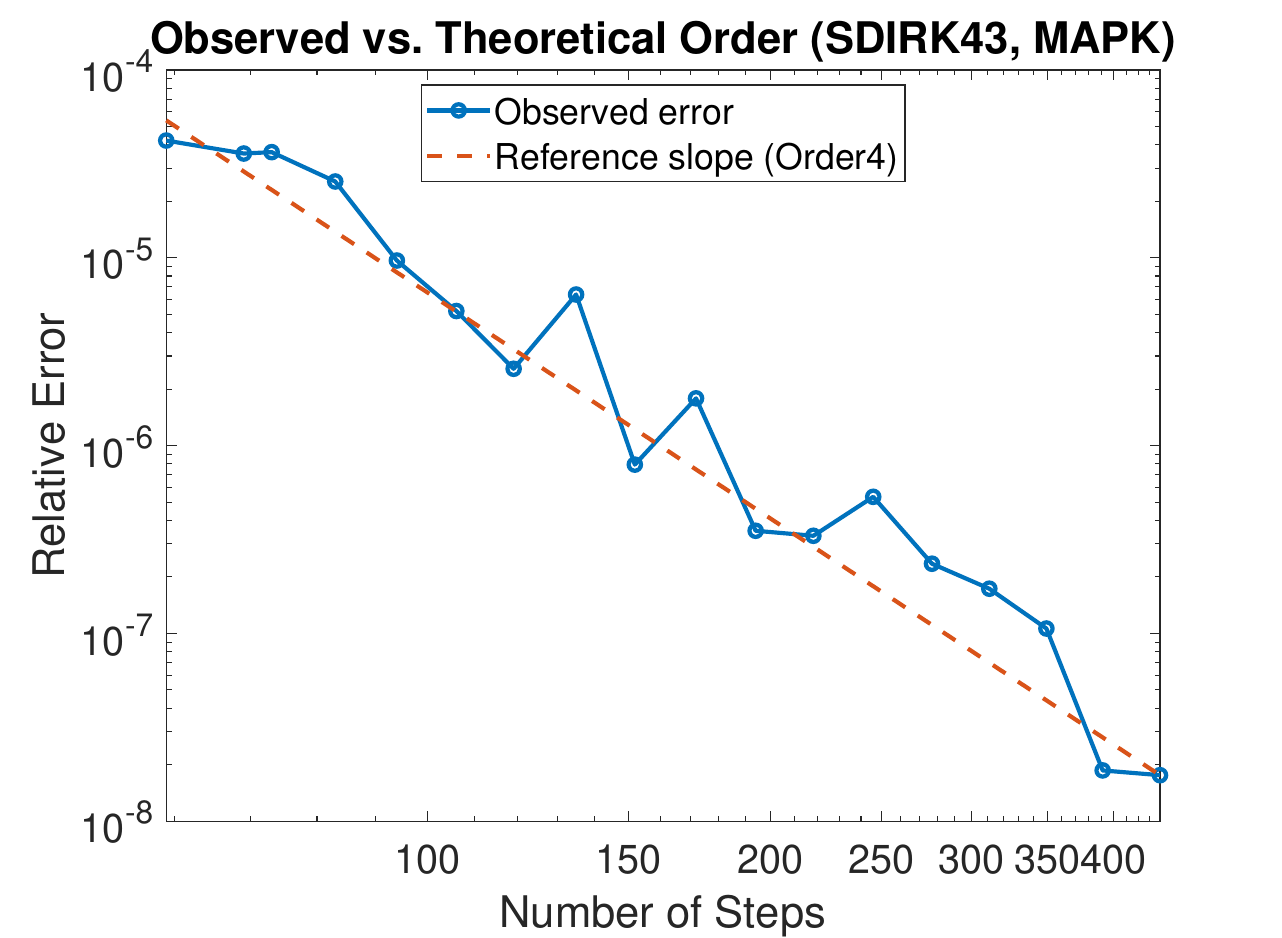}
        \caption{SDIRK43}
        \label{sfig:sdirk-mapk-4_all-stages}
    \end{subfigure}
       \caption{MAPK Cascade \eqref{eqn:mapk-system}. Observed convergence orders for SDIRK methods with positivity correction applied only to all stages.}
       \label{fig:sdirk-mapk_all-stages}
\end{figure}

\subsubsection{Stratospheric Reaction}

The stratospheric reaction mechanism is an extremely stiff, positivity-sensitive atmospheric chemistry model. We integrated over the interval $[19 \cdot 3600, 29 \cdot 3600]$ seconds to capture a representative stiff regime.

For the final-stage correction (Figure \ref{fig:sdirk-strat_last-stage}) the higher-than-expected slopes for the higher-order methods reflect rapid error damping typical of extremely stiff systems.

When applying full-stage correction, the slopes slightly decreased but still followed the expected order trend  (Figure \ref{fig:sdirk-strat_all-stages}). Thus, even in extreme stiffness, full-stage clipping maintains the nominal order without introducing significant degradation.

\begin{figure}[htbp]
    \centering
    \begin{subfigure}[b]{0.49\textwidth}
        \centering
        \includegraphics[width=\textwidth]{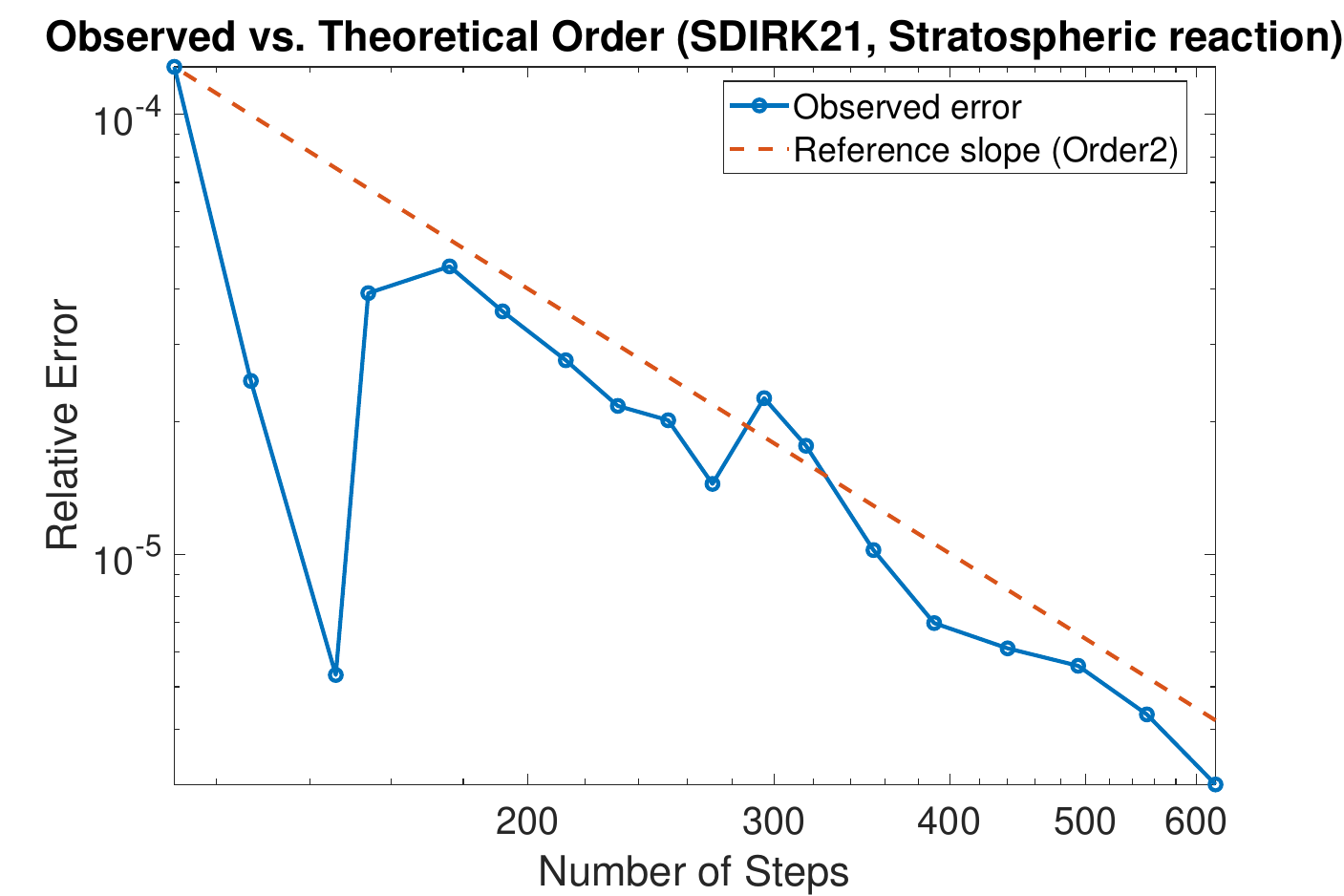}
        \caption{SDIRK21}
        \label{sfig:sdirk-strat-2_last-stage}
    \end{subfigure}
    \begin{subfigure}[b]{0.49\textwidth}
        \centering
        \includegraphics[width=\textwidth]{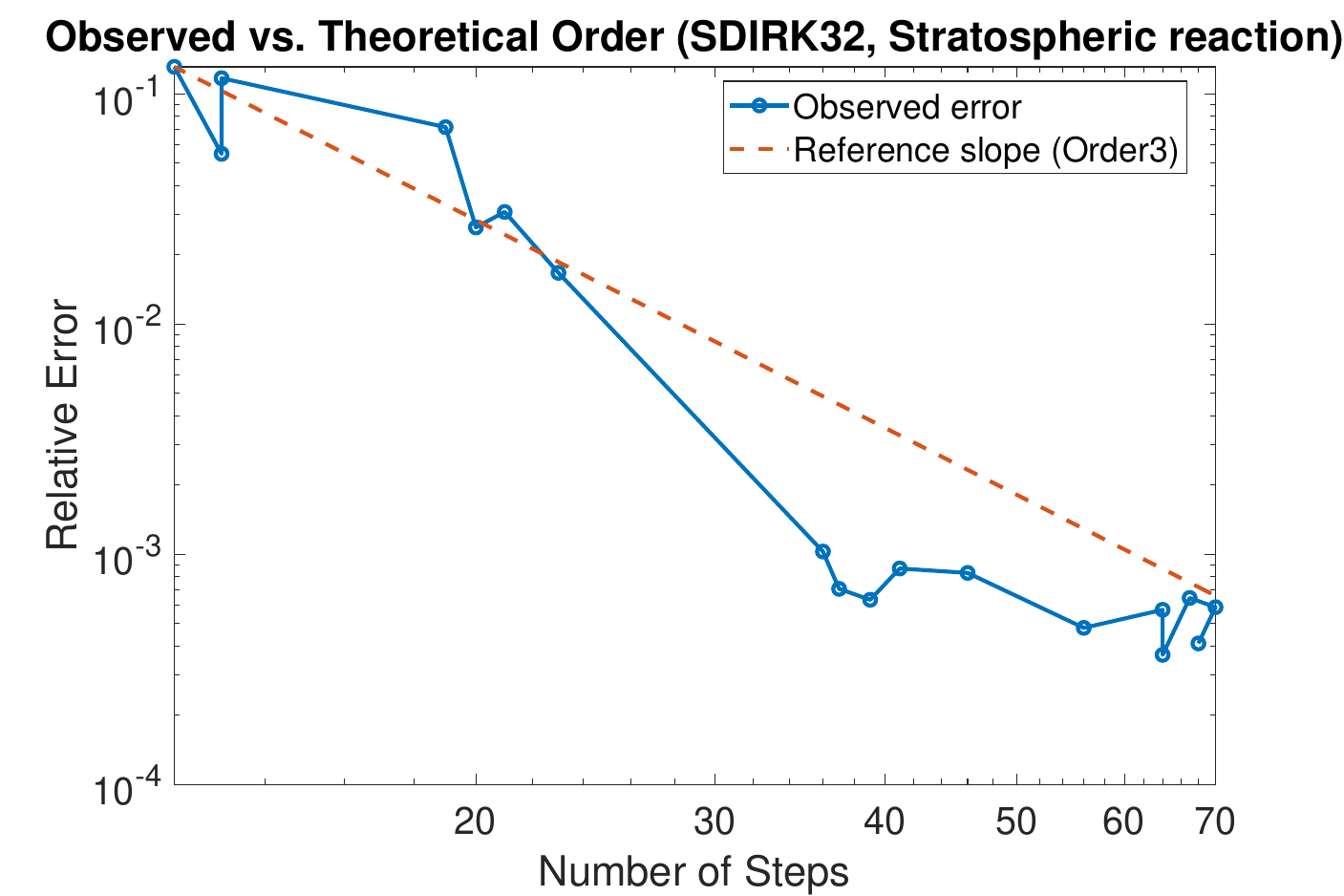}
        \caption{SDIRK32}
        \label{sfig:sdirk-strat-3_last-stage}
    \end{subfigure}
    \begin{subfigure}[b]{0.49\textwidth}
        \centering
        \includegraphics[width=\textwidth]{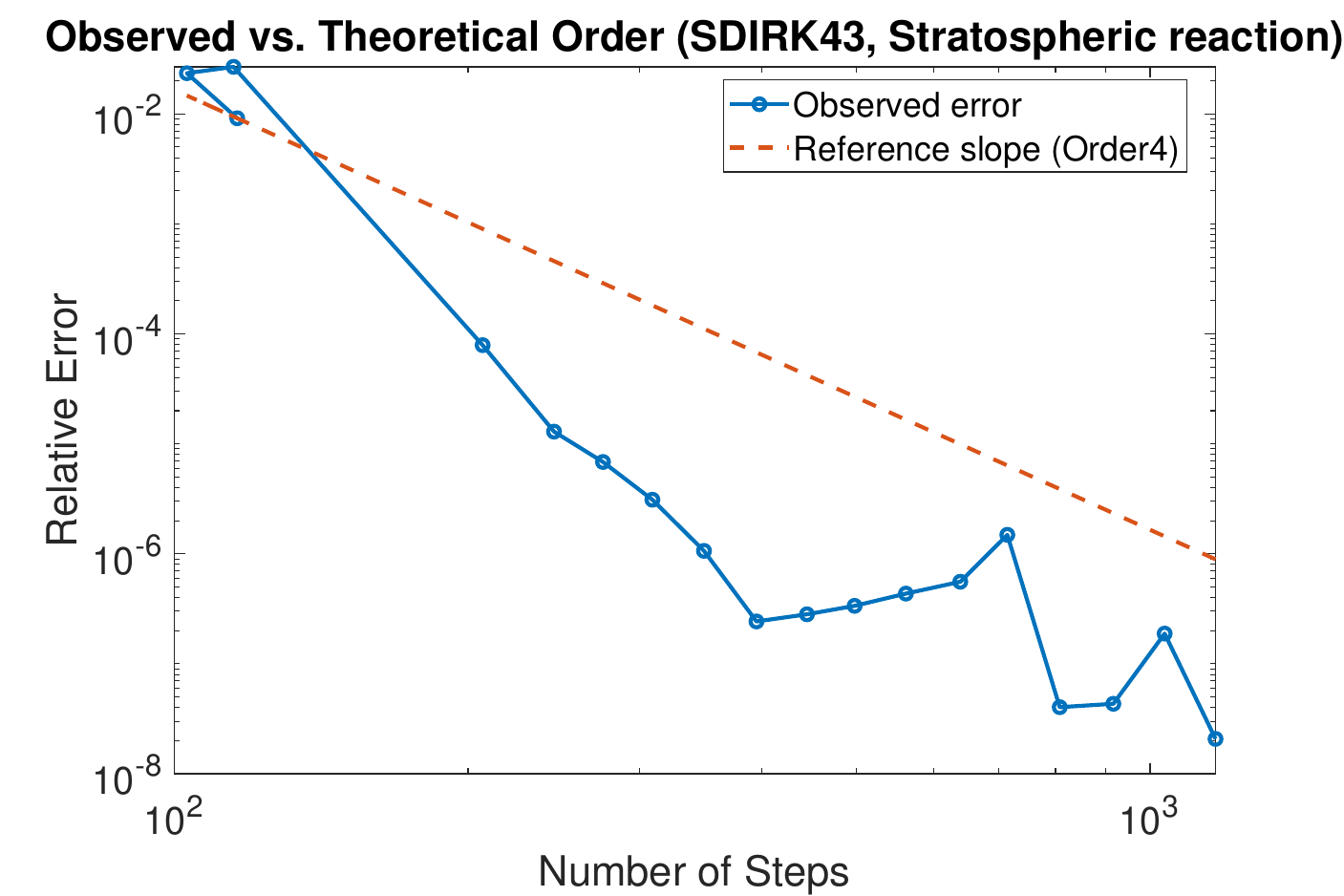}
        \caption{SDIRK43}
        \label{sfig:sdirk-strat-4_last-stage}
    \end{subfigure}
        \caption{Stratospheric Reaction \eqref{eqn:strato-ode}. Observed convergence orders for SDIRK methods with positivity correction applied only to the final stage.}
        \label{fig:sdirk-strat_last-stage}
\end{figure}

\begin{figure}[htbp]
    \centering
    \begin{subfigure}[b]{0.49\textwidth}
        \centering
        \includegraphics[width=\textwidth]{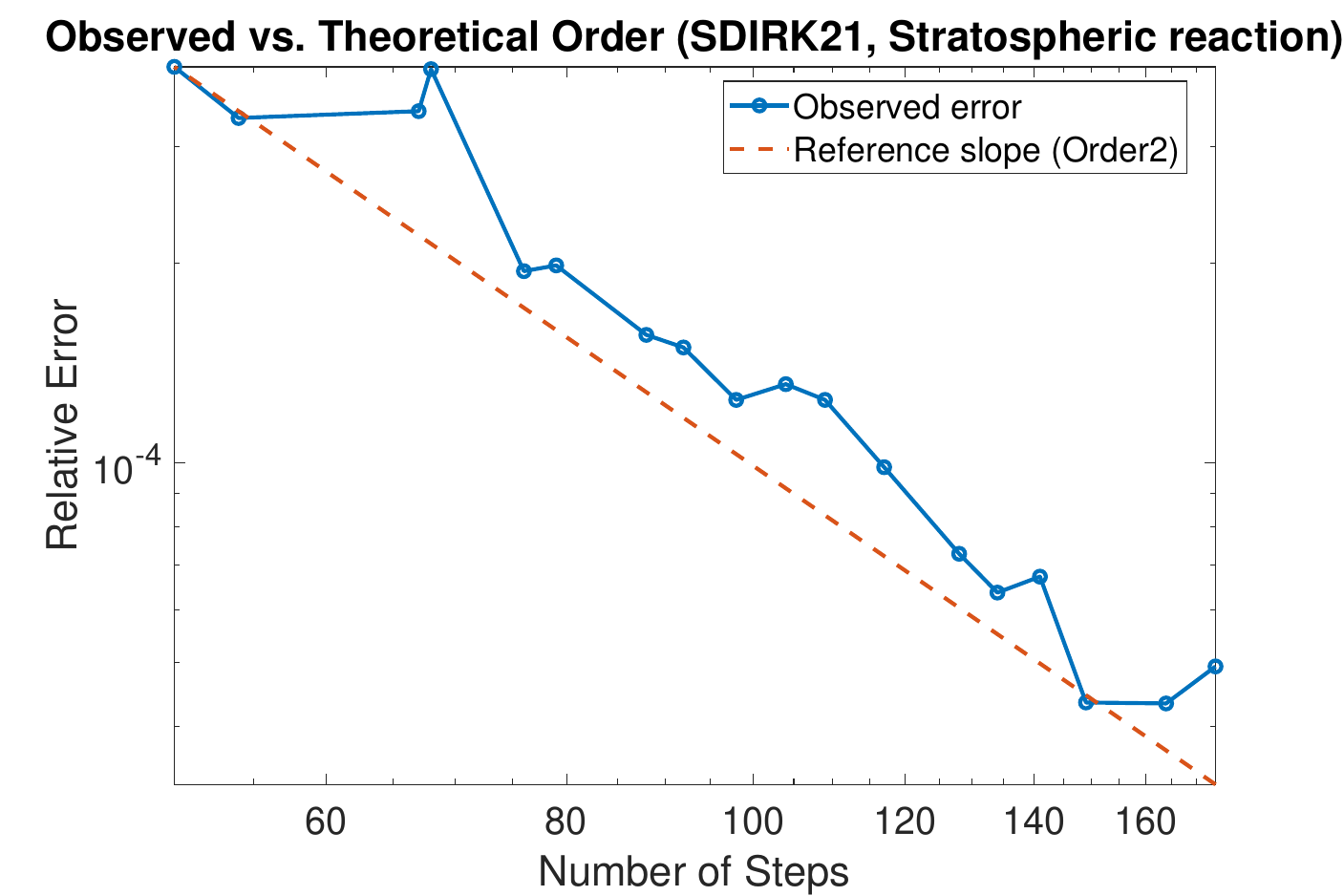}
        \caption{SDIRK21}
        \label{sfig:sdirk-strat-2_all-stages}
    \end{subfigure}
    \begin{subfigure}[b]{0.49\textwidth}
        \centering
        \includegraphics[width=\textwidth]{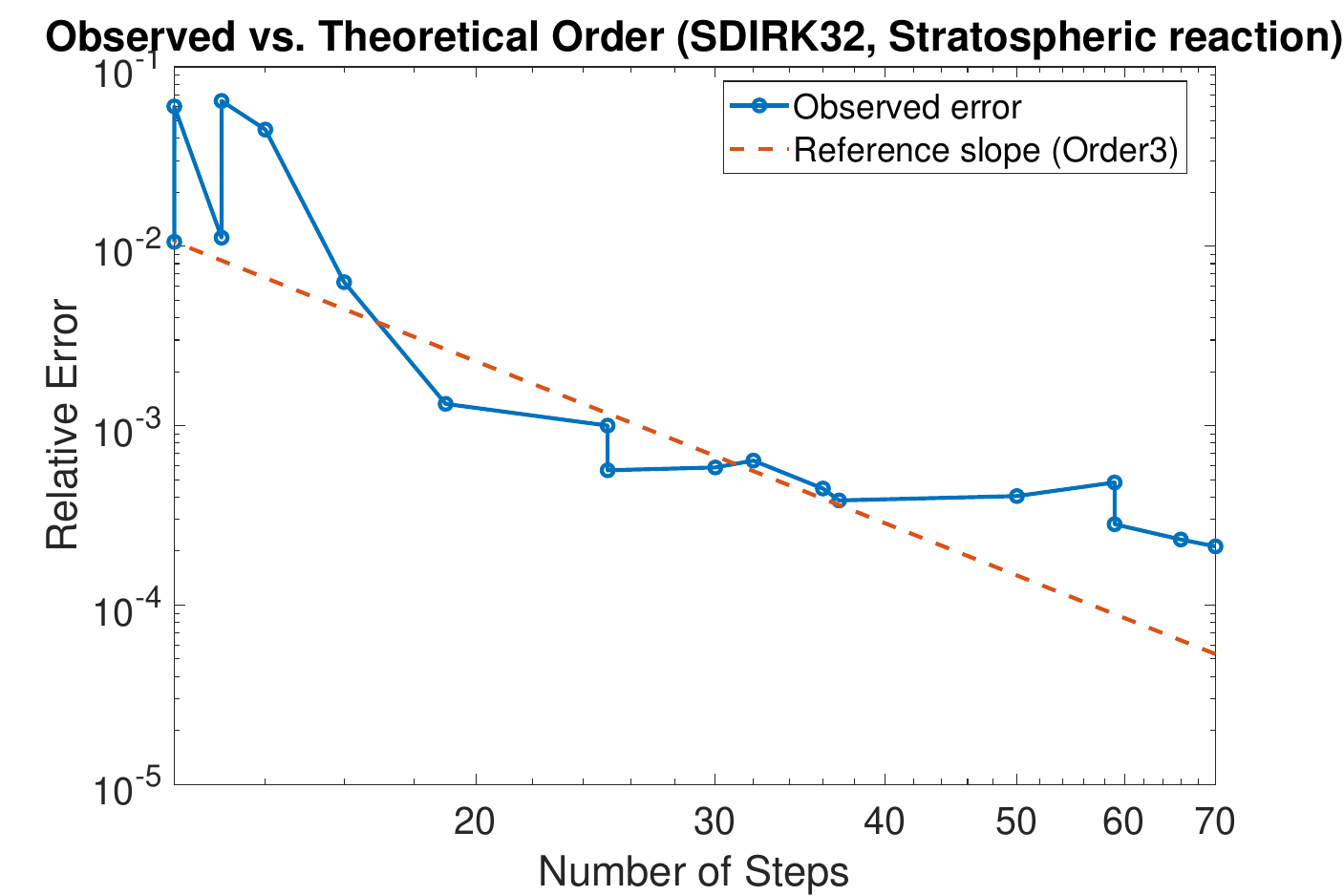}
        \caption{SDIRK32}
        \label{sfig:sdirk-strat-3_all-stages}
    \end{subfigure}
    \begin{subfigure}[b]{0.49\textwidth}
        \centering
        \includegraphics[width=\textwidth]{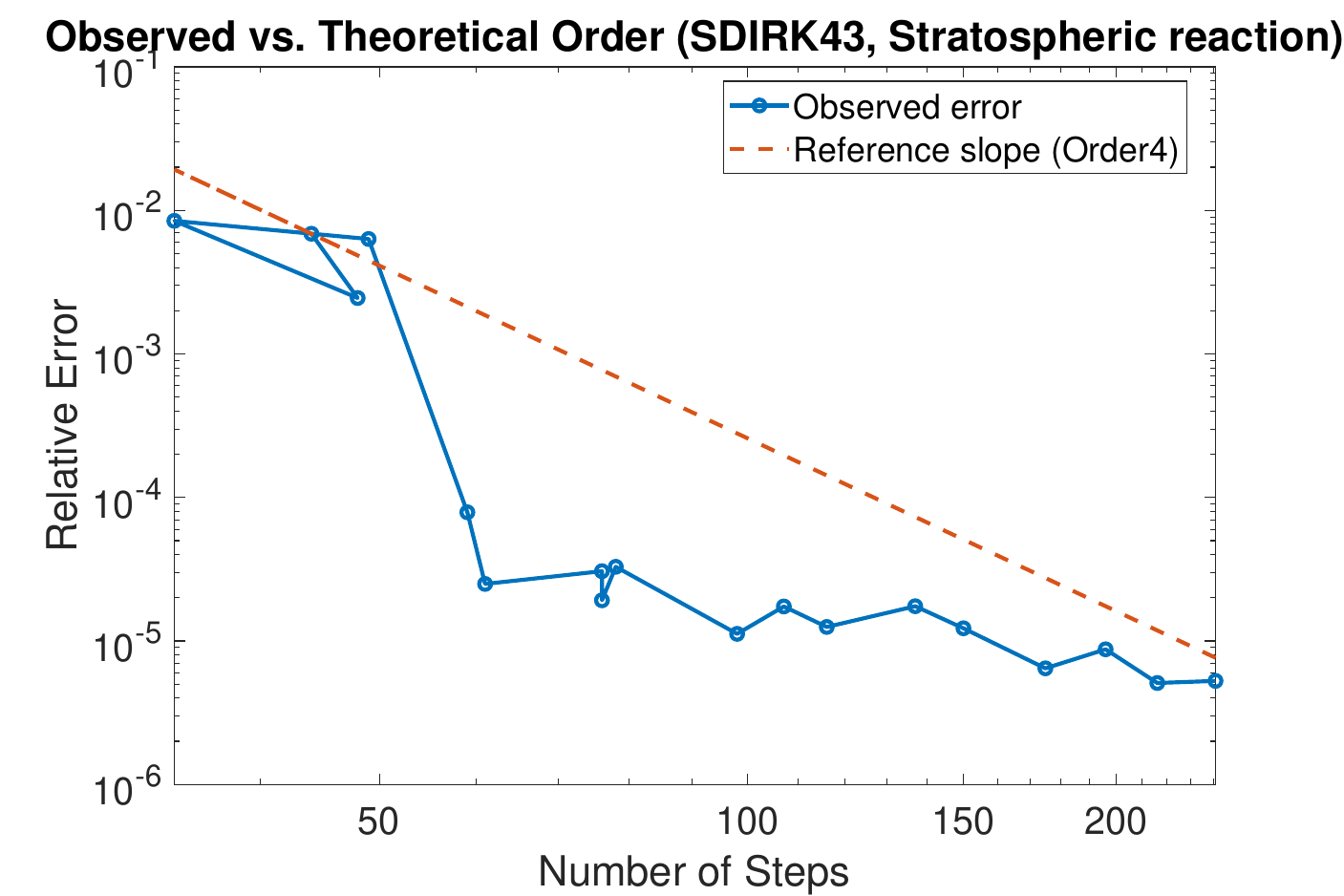}
        \caption{SDIRK43}
        \label{sfig:sdirk-strat-4_all-stages}
    \end{subfigure}
        \caption{Stratospheric Reaction \eqref{eqn:strato-ode}. Observed convergence orders for SDIRK methods with positivity correction applied to all stages.}
        \label{fig:sdirk-strat_all-stages}
\end{figure}

\subsubsection{KdV Equation}

We applied the SDIRK integrators in fixed-step mode to directly measure the convergence rate with respect to the time step size $h$. The log-log plots of error versus time step (Figure \ref{fig:sdirk-kdv_last-stage}) gives the expected slopes. The fixed-step results confirm the expected asymptotic orders of
the SDIRK methods for the dispersive PDE setting.

\begin{figure}[htbp]
    \centering
    \begin{subfigure}[b]{0.49\textwidth}
        \centering
        \includegraphics[width=\textwidth]{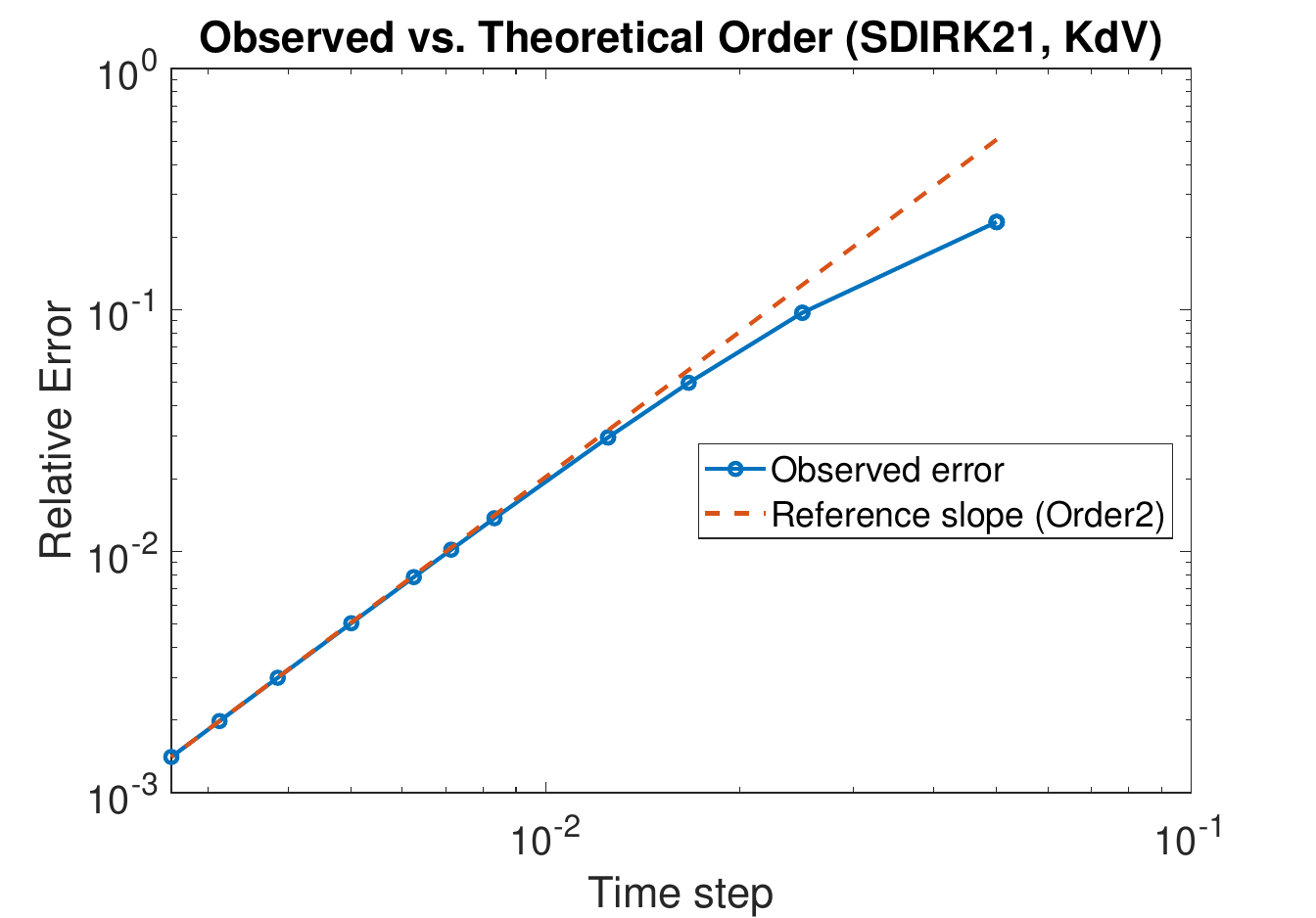}
        \caption{SDIRK21}
        \label{sfig:sdirk-kdv-2_last-stage}
    \end{subfigure}
    \begin{subfigure}[b]{0.49\textwidth}
        \centering
        \includegraphics[width=\textwidth]{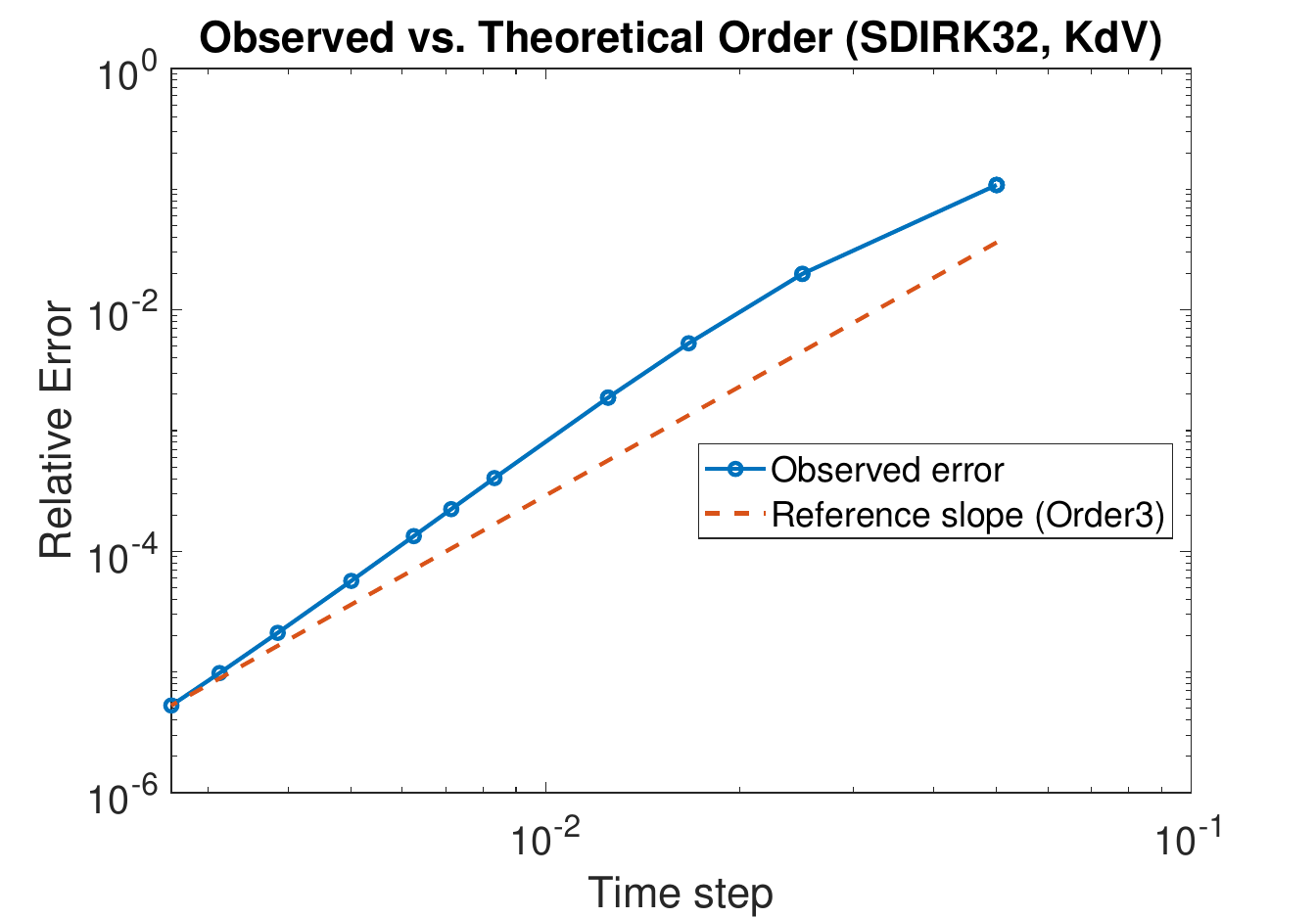}
        \caption{SDIRK32}
        \label{sfig:sdirk-kdv-3_last-stage}
    \end{subfigure}
    \begin{subfigure}[b]{0.49\textwidth}
        \centering
        \includegraphics[width=\textwidth]{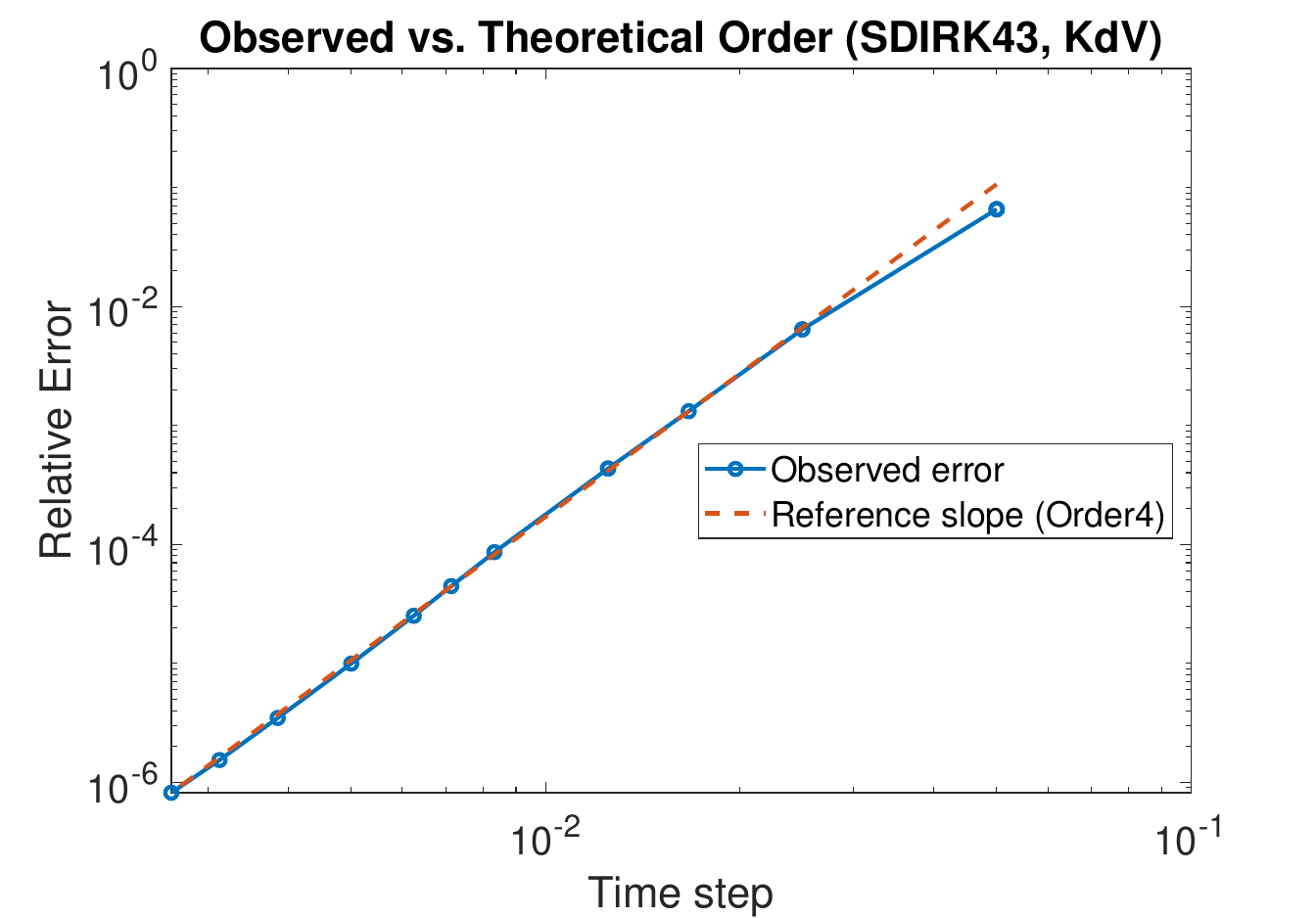}
        \caption{SDIRK43}
        \label{sfig:sdirk-kdv-4_last-stage}
    \end{subfigure}
       \caption{KdV equation \eqref{eqn:kdv-pde}. Observed convergence orders for SDIRK methods with positivity correction applied only to the final stage.}
       \label{fig:sdirk-kdv_last-stage}
\end{figure}

\subsubsection{Summary}

The estimated order results are summarized in Table \ref{tab:orderFinal} and Table \ref{tab:orderStage}. Across all test problems, the results demonstrate that the proposed correction mechanism preserves, in practice, the formal order of the implicit SDIRK methods across diverse classes of problems. These include stiff chemical kinetics, multiscale biochemical signaling networks, and nonlinear dispersive systems, both in adaptive and fixed step-size settings.

\begin{table}[t]
\centering
\caption{Empirical orders of the methods with correction at the final stage only.}
\label{tab:orderFinal}
\resizebox{\linewidth}{!}{%
\begin{tabular}{lccccc}
\toprule
Model & SDIRK21 & SDIRK32 & SDIRK43 \\
\midrule
Stratospheric reaction & $-2.07$ &  $-4.27$ & $-4.70$ \\
MAPK cascade           & $-1.98$ & $-2.76$ & $-3.81$  \\
Robertson reaction     & $-1.51$ & $-3.47$ & $-5.47$  \\
KdV equation           & $-1.72$ & $-3.37$  & $-3.85$  \\
\bottomrule
\end{tabular}}
\end{table}

\begin{table}[t]
\centering
\caption{Empirical orders of the methods with correction at each stage.}
\label{tab:orderStage}
\resizebox{\linewidth}{!}{%
\begin{tabular}{lccccc}
\toprule
Model & SDIRK21 & SDIRK32 & SDIRK43 \\
\midrule
Stratospheric reaction & $-2.07$ &  $-4.28$ & $-4.62$ \\
MAPK cascade           & $-1.98$ & $-2.75$ & $-3.86$  \\
Robertson reaction     & $-1.95$ & $-2.84$ & $-3.76$  \\
\bottomrule
\end{tabular}}
\end{table}

\subsection{Computational Overhead Analysis}
\label{subsec:efficiency}

The computational overhead of the proposed positivity-preserving corrected integrator was evaluated against the base Runge-Kutta scheme without corrections. The comparison was conducted using our four representative production-destruction models.

All simulations were run using fixed relative and absolute tolerances of $10^{-7}$ to ensure consistent accuracy across methods. For each problem, we measured the wall-clock time required to complete the integration over its simulation time span: 3 days ($[12 \cdot 3600, 84 \cdot 3600]$) for the stratospheric reaction, $[0, 200]$ for the MAPK cascade, $[0, 10^4]$ for the Robertson reaction, and $[0, 1.7]$ for the KdV equation, corresponding to approximately one soliton crossing of the computational domain. Each experiment was repeated five times, and average runtimes were computed to mitigate fluctuations due to background processes. All computations were performed on a laptop equipped with an AMD Ryzen\texttrademark\ 7 5825U CPU (8 cores), 16 GB of RAM, running Windows 11, using MATLAB R2023b. Timing was recorded using MATLAB's \texttt{tic} and \texttt{toc} commands. All tests were conducted using the SDIRK43 scheme, a fourth-order singly diagonally implicit Runge-Kutta method, both with and without the proposed positivity-preserving correction.

Table \ref{tab:timing} summarizes the results for the four test problems.
The stratospheric reaction, a highly stiff system with many positivity-sensitive
interactions, exhibited a substantial runtime increase when corrections were
applied. In contrast, the MAPK cascade showed negligible overhead, while the
Robertson reaction experienced a significant speedup. The KdV equation displayed
moderate overhead due to the additional correction steps.

These results demonstrate that while positivity-preserving corrections can introduce overhead in some stiff or dispersive problems, they can also lead to performance improvements in other situations. Overall, the proposed correction mechanism is a practical and effective solution for a wide range of stiff ODE systems.

\begin{table}[t]
\centering
\caption{Average wall-clock runtimes for the SDIRK43 scheme with and without
positivity-preserving correction. Each experiment was repeated five times and
average runtimes are reported.}
\label{tab:timing}
\resizebox{\linewidth}{!}{%
\begin{tabular}{lccccc}
\toprule
Model & Time interval & No correction (s) & With correction (s) & Overhead (\%) \\
\midrule
Stratospheric reaction & $[12\cdot3600,\,84\cdot3600]$ & 30.2  & 110.5 & +266 \\
MAPK cascade           & $[0,\,200]$                  & 0.560 & 0.516  & $-$7.9 \\
Robertson reaction     & $[0,\,10^{4}]$               & 0.485 & 0.090  & $-$81.4 \\
KdV equation           & $[0,\,1.7]$                  & 10.9  & 21.2   & +94.5 \\
\bottomrule
\end{tabular}}
\end{table}

\section{Conclusions}
\label{sec:conclusions}
This paper addressed the development and evaluation of a positivity-preserving Patankar correction strategy for the numerical integration of stiff systems of ordinary differential equations representing production-destruction processes. Recognizing that standard integration methods may violate fundamental structural properties such as non-negativity and conservation, we proposed a post-processing approach. This approach is compatible with general time integrators, including singly diagonally implicit Runge-Kutta (SDIRK) schemes. The correction procedure, based on clipping negative entries and applying structure-preserving scaling, was shown to maintain positivity and preserve linear invariants without requiring changes to the base integration algorithm.

While the focus here is on stiff systems and implicit Runge-Kutta schemes, the corrections procedure could be applied in principle to explicit Runge-Kutta schemes. However, the corrections requires solving the linear system and the advantage of explicit computations can be lost.

Extensive numerical experiments involving the stratospheric reaction system, the MAPK cascade, the Robertson reaction, and the semi-discrete Korteweg-De Vries equation demonstrated that the proposed correction reliably enforces positivity and invariant conservation. This holds across a wide range of stiff and multiscale problems. For the ODE benchmarks, both final-stage and all-stage corrections performed well, whereas for the KdV equation, only the final-stage correction preserved the nonlinear wave dynamics. Order validation tests confirmed that the formal accuracy of SDIRK methods was retained in practice.
Efficiency benchmarks further revealed that the additional correction step introduces only modest computational overhead relative to implicit time stepping; while in stiff regimes, it improved robustness by preventing solver breakdown.
This overhead could be alleviated if one uses the correction matrix \(\bar{\mathbf{G}}\) computed in the current for the correction of $\mathbf{y}_{n+1}$ as the approximate Jacobian for the nonlinear solver in the subsequent step to compute $\mathbf{y}^{(p)}_{n+2}$. A future study will consider this strategy to increase computational efficiency.

In conclusion, the presented positivity-preserving correction strategy provides a practical and effective way to enforce physical constraints such as positivity, mass conservation, and accuracy. It ensures these properties in the numerical solution of stiff ODEs and related PDEs. It extends the applicability of classical time integration methods to a broad class of scientific and engineering problems, from atmospheric chemistry to nonlinear wave propagation, and offers a foundation for further work on higher-stage-order DIRK methods, multi-invariant systems, and large-scale PDE applications.

\section*{Acknowledgment}

The work of Amit N. Subrahmanya was supported by the U.S. Department of Energy, Office of Science, Advanced Scientific Computing Research Program under contract number DE-AC02-06CH11357. The work of Adrian Sandu and Reid J. Gomillion was supported by the National Science Foundation (NSF) awards DMS-2436357, DMS-2411069, and by the Computational Science Laboratory.

The authors also thank the anonymous reviewers for their constructive feedback and insightful suggestions such as the property \eqref{eq:strict-positivity}, which helped improve the work in this paper.

During the preparation of this work the author(s) used ChatGPT in order to polish and refine the text of the manuscript. After using this tool/service, the author(s) reviewed and edited the content as needed and take(s) full responsibility for the content of the published article.

\bibliographystyle{elsarticle-num} 
\bibliography{ref}

\end{document}